\documentclass[11pt]{amsart}
\usepackage{amsfonts}
\usepackage[latin9]{inputenc}
\usepackage{amsmath}
\usepackage{amssymb}
\usepackage{breqn}
\usepackage{url}
\usepackage{graphicx}
\usepackage[pdfstartview=FitH]{hyperref}
\usepackage{amsthm}
\usepackage{authblk}
\usepackage{hyperref}
\usepackage{url}
\usepackage{cleveref}
\usepackage{verbatim}

\crefformat{section}{\S#2#1#3} 
\crefformat{subsection}{\S#2#1#3}
\crefformat{subsubsection}{\S#2#1#3}


\newcommand{\im}{\operatorname{Im}}
\newcommand{\Ker}{\operatorname{Ker}}

\newcommand{\X}{\operatorname{X}}
\newcommand{\B}{\operatorname{\mathbb{E}}}
\newcommand{\Sym}{\operatorname{Sym}}
\newcommand{\sgn}{\operatorname{sgn}}
\newcommand{\id}{\operatorname{id}}
\newcommand{\tr}{\operatorname{tr}}

\newcommand{\dist}{\operatorname{dist}}
\newcommand{\Confdim}{\operatorname{Confdim}}

\makeatletter

\begin{document}
\newtheorem{theorem}{Theorem}[section]
\newtheorem{lemma}[theorem]{Lemma}
\newtheorem{claim}[theorem]{Claim}
\newtheorem{proposition}[theorem]{Proposition}
\newtheorem{corollary}[theorem]{Corollary}
\theoremstyle{definition}
\newtheorem{definition}[theorem]{Definition}
\newtheorem{observation}[theorem]{Observation}
\newtheorem{example}[theorem]{Example}
\newtheorem{remark}[theorem]{Remark}

\title{Garland's method with Banach coefficients}
\author{Izhar Oppenheim}
\thanks{The author partially supported by ISF grant no. 293/18.}
\newcommand{\Addresses}{{
  \bigskip
  \footnotesize
IZHAR OPPENHEIM, \textsc{Department of Mathematics, Ben-Gurion University of the Negev, Be'er Sheva 84105, Israel}\par\nopagebreak
  \textit{E-mail address:} \texttt{izharo@bgu.ac.il}
}}

\maketitle
\begin{abstract}
We prove a Banach version of Garland's method of proving vanishing of cohomology for groups acting on simplicial complexes. The novelty of this new version is that our new condition applies to every reflexive Banach space. 

This new version of Garland's method allows us to deduce several criteria for vanishing of group cohomology with coefficients in several classes of Banach spaces (uniformly curved spaces, Hilbertian spaces and $L^p$ spaces). 

Using these new criteria, we improve recent results regarding Banach fixed point theorems for random groups in the triangular model and give a sharp lower bound for the conformal dimension of the boundary of such groups. Also, we derive new criteria for group stability with respect to p-Schatten norms.
\end{abstract} 
\textbf{Keywords}. Group cohomology, Random groups, Stability.

\section{Introduction}

Let $X$ be a locally finite, pure $n$-dimensional simplicial complex and $\Gamma$ be a locally compact, unimodular group acting cocompactly and properly on $X$. Under the assumption that $X$ is an affine building, Garland \cite{Gar} gave a local criterion for the vanishing of the equivariant k-th cohomology for any unitary representation of $\Gamma$ and any $1 \leq k \leq n-1$. His approach was later generalized by Ballmann and {\'S}wiatkowski \cite{BS} to all simplicial complexes and this generalization is sometimes referred to as ``Garland's method''. There have been several generalizations of this method that considered the case where $\pi$ is an isometric representation on a Banach space - see \cite{Nowak}, \cite{Koivisto}, \cite{OppFixed}. However, all these generalizations gave somewhat weak results when applied to examples. For example, when considering vanishing of cohomology over $L^p$ spaces, the results of \cite{Nowak} could not show vanishing of cohomology every $1 < p < \infty$ neither for $\widetilde{A}_2$ groups nor for random groups (see \cite[Theorems 5.1, 6.2]{Nowak}).

We note that Garland's original work referred to affine buildings and in this set-up strong results regarding vanishing of cohomologies with Banach coefficients are known: See \cite{Laff}, \cite{Liao} and \cite{LdlSW} for results regarding vanishing of the first cohomology and see \cite{Opp}, \cite{LO} for results regarding vanishing of higher cohomologies. However, much less is known when one considers the less structured setting of a group acting on a simplicial complex without assuming the extra structure of an affine building. 

Recently, the results for vanishing of the first cohomology of random groups with coefficeints in Banach spaces were improved: First, Dru\c{t}u and Mackay \cite{DrutuM} proved vanishing of the first cohomology for random groups for $L^p$ spaces. Second, considering random groups in the triangular model, de Laat and de la Salle \cite{LaatSalle} gave a criterion for vanishing of the first cohomology for a group acting on a two dimensional simplicial complex that was applicable to all uniformly curved Banach spaces (and in particular, to all $L^p$ spaces). 

The observation of de Laat and de la Salle was that one can get much stronger results than in previous works if the assumption of the spectral gap in the links is replaced with the assumption of a two-sided spectral gap (or equivalently with the assumption of contraction of the random walk operator).  Using this insight and the ideas of Nowak \cite{Nowak}, we rework Garland's method under the assumption of two-sided spectral gaps in the links and derive a very general vanishing criteria that are applicable to all uniformly curved Banach spaces (and, in part, to all reflexive Banach spaces). We give two applications for our result:
\paragraph*{\textbf{Fixed point properties of random groups}} Applying our vanishing result to random groups in the triangular model improve on the results of de Laat and de la Salle when considering fixed point properties with respect to $L^p$ spaces. As a result, we derive a \emph{sharp} lower bound for the conformal dimension of the boundary of these groups that was not achieved in previous works. Namely, in previous works (\cite{DrutuM}, \cite{LaatSalle}) it was shown that with high probability, this conformal dimension is contained in an interval between $C \sqrt{\log m}$ and  $C ' \log (m)$ (where $m$ is a parameter of the model - see exact formulation below). Our work shows that in fact the conformal dimension is in an interval of the form $C '' \log m$ and  $C ' \log (m)$ and thus our result is sharp. We note that as far as we understand, the proof methods in \cite{DrutuM}, \cite{LaatSalle} can not be improved to yields such a sharp bound. 
\paragraph*{\textbf{Group stability with respect to $p$-Schatten norms}} By a result of \cite{DGLT}, vanishing of the second cohomology for Hilbertian spaces implies stability with respect to $p$-Schatten norms (see definitions below). Thus, our new criteria for vanishing of the second cohomology gives new criteria for group stability.

\subsection{New criteria for vanishing of cohomology with Banach coefficients}
In order to state our results, we will need the following notation: For every simplex $\tau \in X (k)$ denote $X_\tau$ to be the link of $\tau$ and $M_\tau, A_\tau : \ell^2 (X_\tau (0)) \rightarrow  \ell^2 (X_\tau (0))$ to be the following operators: $M_\tau$ is the orthogonal projection on the subspace of constant functions in $\ell^2 (X_\tau (0))$ and $A_\tau$ in the random walk operator on the $1$-skeleton of $X_\tau$. With these notations, we prove the following:

\begin{theorem}
\label{local criterion thm - intro}
Let $\B$ be a reflexive Banach space, $X$ a locally finite, pure $n$-dimensional simplicial complex and $\Gamma$ a locally compact, unimodular group acting cocompactly and properly on $X$. 

For $1 \leq k \leq n-1$, if 
$$\max_{\tau \in X (k-1)} \Vert (A_\tau (I-M_\tau) \otimes \id_{\B}) \Vert_{B (\ell^2 (X_\tau (0) ; \B))} < \frac{1}{k+1},$$
then for every continuous isometric representation $\pi$ of $\Gamma$ on $\B$, $H^k (X, \pi) = 0$. 
\end{theorem}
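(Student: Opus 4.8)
\emph{Proof plan.} The plan is to run Garland's method inside the Bochner spaces $\ell^2(\,\cdot\,;\B)$ attached to the links of $X$, using reflexivity of $\B$ to supply, in place of the Hilbert-space Hodge decomposition, a norm-minimizing representative in each cohomology class. Fix representatives for the finitely many $\Gamma$-orbits of simplices (cocompactness) and equip the space $C^j(X,\pi)$ of $\Gamma$-equivariant alternating $\B$-valued $j$-cochains with the norm $\Vert\phi\Vert^2=\sum_\sigma w_\sigma\Vert\phi(\sigma)\Vert_\B^2$, the $w_\sigma$ being the standard Garland weights (each proportional to the number of $n$-simplices containing $\sigma$). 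Since $\B$ is reflexive and $C^j(X,\pi)$ is a finite $\ell^2$-direct sum of closed subspaces of copies of $\B$, it is a reflexive Banach space; let $d_j\colon C^j\to C^{j+1}$ be the (bounded) coboundary. Cocompactness makes this complex compute $H^\bullet(X,\pi)$, so it suffices to show every $\phi\in\Ker d_k$ lies in $\overline{\im d_{k-1}}$. Given such $\phi$, the coset $\phi+\overline{\im d_{k-1}}$ is nonempty, closed and convex, hence by reflexivity contains an element $\phi_0$ of minimal norm; $\phi_0$ is again a $k$-cocycle, and it suffices to prove $\phi_0=0$.

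Next, localize: for $\tau\in X(k-1)$ and $\psi\in C^k$ let $\psi_\tau\in\ell^2(X_\tau(0);\B)$ be the localization $\psi_\tau(v)=\pm\psi(v\star\tau)$, with the weights induced on $X_\tau$ (near $\tau$ the representation $\pi$ contributes only isometries, so the operator seen on the link is the untwisted $A_\tau\otimes\id_\B$). I would re-derive, directly in these Bochner spaces, the two identities underlying Garland's method: a norm-reconstruction identity $\sum_{\tau\in X(k-1)}\Vert\psi_\tau\Vert_{\ell^2(X_\tau(0);\B)}^2=(k+1)\Vert\psi\Vert^2$, each $k$-simplex being counted once for each of its $k+1$ faces of dimension $k-1$, together with the companion formula that expresses the cocycle/coboundary structure of $\psi$ through the $\psi_\tau$ and the operators $A_\tau$; and the identity showing that the constant part $(M_\tau\otimes\id_\B)\psi_\tau$ is exactly the localization at $\tau$ of $d_{k-1}$ applied to the $(k-1)$-cochain assembled from the codimension-one faces of $\tau$. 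The second identity says that moving $\phi_0$ within $\im d_{k-1}$ adjusts precisely the constant parts of its localizations; combined with the minimality of $\phi_0$ and the fact that $M_\tau$ is, inside each $\ell^2(X_\tau(0))$-factor, the orthogonal projection onto the constants, this forces $(M_\tau\otimes\id_\B)(\phi_0)_\tau=0$, i.e. $(\phi_0)_\tau=((I-M_\tau)\otimes\id_\B)(\phi_0)_\tau$, for every $\tau\in X(k-1)$.

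Finally, feed this into the first identity. Because $d_k\phi_0=0$, the companion formula rearranges --- this is the Banach analogue of the Ballmann--{\'S}wiatkowski estimate --- into a bound $\Vert\phi_0\Vert\le(k+1)\big(\max_{\tau\in X(k-1)}\Vert A_\tau(I-M_\tau)\otimes\id_\B\Vert_{B(\ell^2(X_\tau(0);\B))}\big)\Vert\phi_0\Vert$, the factor $k+1$ being exactly the multiplicity from the norm-reconstruction identity and the \emph{full} operator norm entering because, lacking self-adjointness, one must control $A_\tau(I-M_\tau)$ on all of $\ell^2(X_\tau(0);\B)$ rather than on ``the top eigenvalue'' alone --- this is why a two-sided bound is needed. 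By hypothesis the coefficient is $<1$, so $\phi_0=0$, hence $\phi\in\overline{\im d_{k-1}}$ and $H^k(X,\pi)=0$ (the same reasoning applied to an arbitrary $k$-cochain yields a Poincar\'e-type inequality $\dist(\psi,\overline{\im d_{k-1}})\le C\Vert d_k\psi\Vert$, the quantitative form of the statement). I expect the real work to be in reconstructing the two Garland identities without the Hilbert apparatus --- no adjoint $d_k^*$, no orthogonal Hodge splitting --- so that the argument proceeds entirely through the Bochner spaces and sees only the tensored operators $A_\tau(I-M_\tau)\otimes\id_\B$; in particular the passage ``$\phi_0$ norm-minimal $\Rightarrow$ the constant parts of its localizations vanish'', which is immediate in the Hilbert case from $d_{k-1}^*\phi_0=0$, is the subtle point where reflexivity genuinely enters.
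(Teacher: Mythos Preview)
Your plan has a genuine gap precisely at the step you yourself flag as delicate, and the resolution you sketch does not work. Norm--minimality of $\phi_0$ in $\phi+\overline{\im d_{k-1}}$ does \emph{not} force $(M_\tau\otimes\id_\B)(\phi_0)_\tau=0$. Two things go wrong. First, the claim that ``moving within $\im d_{k-1}$ adjusts precisely the constant parts of the localizations'' is false: for $\tau=(v_0,\dots,v_{k-1})$ one has $(d_{k-1}\psi)_\tau(v)=\psi(\tau)+\sum_{i=0}^{k-1}(-1)^{i+1}\psi(v,v_0,\dots,\widehat{v_i},\dots,v_{k-1})$, and the second sum genuinely depends on $v$. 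Second, and more fundamentally, even if you could perturb freely by constants in each link, minimizing $\sum_\sigma w_\sigma\|\phi(\sigma)\|_\B^2$ over such perturbations does \emph{not} kill the $\ell^2$--average $(M_\tau\otimes\id_\B)(\phi_0)_\tau$: in a non--Hilbert $\B$ the minimizer of $c\mapsto\sum_v m_\tau(v)\|\phi_\tau(v)-c\|_\B^2$ is a ``barycenter'' that need not coincide with the $m_\tau$--mean. What reflexivity and minimality actually give you, via the subdifferential of the norm, is a dual element $\psi^*\in C^k(X,\overline\pi)$ with $\|\psi^*\|=\|\phi_0\|$, $\langle\phi_0,\psi^*\rangle=\|\phi_0\|^2$, and $d_{k-1}^*\psi^*=0$ --- i.e.\ it is $(M_\tau\otimes\id_{\B^*})(\psi^*)_\tau$ that vanishes, not $(M_\tau\otimes\id_\B)(\phi_0)_\tau$. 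Relatedly, your ``companion formula'' cannot be a norm identity: the Garland identity is bilinear, and in the Banach setting it only makes sense as a pairing between $C^k(X,\pi)$ and $C^k(X,\overline\pi)$.

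This is exactly how the paper proceeds, and it requires rather than avoids adjoints. One works simultaneously with $\pi$ and the dual representation $\overline\pi$ on $\B^*$ (continuity of $\overline\pi$ uses that reflexive spaces are Asplund), and establishes the Garland identity for the pairing: $\langle d_k\phi,\overline{d_k}\psi\rangle+\langle\overline{d_{k-1}}^*\phi,d_{k-1}^*\psi\rangle=\langle\phi,\psi\rangle-\tfrac{1}{k!}\sum_\tau|\Gamma_\tau|^{-1}\langle (A_\tau(I-M_\tau)\otimes\id_\B)\phi_\tau,\psi_\tau\rangle_\tau$ for all $\phi\in C^k(X,\pi)$, $\psi\in C^k(X,\overline\pi)$. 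Bounding the link sum by $(k+1)C'\cdot\tfrac12(\|\phi\|^2+\|\psi\|^2)$ via the norm--reconstruction identity, and then choosing $\psi=J\phi$ (a duality--map section, whose equivariance is the only place one really works), one gets for $\phi\in\Ker d_k$ the inequality $\|\overline{d_{k-1}}^*\phi\|\geq\tfrac{1-(k+1)C'}{\sqrt{k+2}}\|\phi\|$; a symmetric argument on $\overline\pi$ and a closed--range/adjoint argument then give $\im d_{k-1}=\Ker d_k$. In short: your instinct that reflexivity replaces the Hodge splitting is right, but the replacement lives in the dual, and the argument must pass through $C^k(X,\overline\pi)$ and the adjoints $d_{k-1}^*,\overline{d_{k-1}}^*$ rather than trying to stay inside $C^k(X,\pi)$ alone.
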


\begin{remark}
A result of the same flavor was given in \cite[Theorem B]{LaatSalle} by de Laat and de la Salle for the vanishing of the first cohomology for groups acting on two-dimensional simplicial complexes. We note that our Theorem improves on \cite[Theorem B]{LaatSalle} even when considering only vanishing of the first cohomology: First, our Theorem holds for any reflexive Banach space, while \cite[Theorem B]{LaatSalle} is only applicable for super-reflexive spaces. Second, in terms of parameters, the contraction condition of $A_\tau (I-M_\tau) \otimes \id_{\B}$ does not depend on the Banach space, but only on $k$ (as opposed to \cite[Theorem B]{LaatSalle}). Last, our proof is simpler in the regard that it does not use the $p$-Laplacian or any uniform convexity arguments.
\end{remark}

Theorem \ref{local criterion thm - intro} is easily applicable in the setting uniformly curved Banach spaces (see Definition \ref{uc def}) such as (commutative and non-commutative) $L^p$ spaces and more generally strictly $\theta$-Hilbertian spaces (see exact definition in \cref{subsect Strictly theta-Hilbertian spaces}). Namely, for a uniformly curved Banach space we can deduce vanishing of cohomology based the fact that links are spectral expanders. Before stating these type of results, we recall the relevant terminology: Let $(V,E)$ be a connected finite graph and let $A$ be the random walk operator on this graph. Recall that $A$ is a self-adjoint operator and has the eigenvalue $1$ with multiplicity $1$. For a constant $\lambda$, the graph $(V,E)$ is called a \textit{one-sided $\lambda$-spectral expander} if the second largest eigenvalue of $A$ is $\leq \lambda$. The graph $(V,E)$ is called a \textit{two-sided $\lambda$-spectral expander} if the spectrum of $A$ is contained in the interval $[-\lambda, \lambda] \cup \lbrace 1 \rbrace$.

\begin{theorem}[Informal, see Proposition \ref{vanishing of k-coho prop for uni curv} and Theorem \ref{criterion coho vanish for uc} for explicit formulations]
\label{uniformly curved thm intro}
Let $\B$ be a uniformly curved Banach space. There are positive constants $\lbrace \lambda_k (\B) >0 : k \in \mathbb{N} \rbrace$ such that the for every locally finite, pure $n$-dimensional simplicial complex $X$, every locally compact, unimodular group $\Gamma$ acting cocompactly and properly on $X$ the following holds:
\begin{enumerate}
\item For every $1 \leq k \leq n-1$, if there is $0 < \lambda < \lambda_k (\B)$ such that for every $\tau \in X(k-1)$, the one skeleton of $X_\tau$ is a two-sided $\lambda $-spectral expander, then for every continuous isometric representation $\pi$ of $\Gamma$ on $\B$, $H^k (X, \pi) = 0$.
\item For every $1 \leq k \leq n-1$,  if there is $0 <  \lambda  < \lambda_k (\B)$ such that for every $\tau \in X(n-2)$, the one skeleton of $X_\tau$ is a two-sided $\frac{\lambda }{1+ (n-k-1) \lambda }$-spectral expander, then for every continuous isometric representation $\pi$ of $\Gamma$ on $\B$, $H^k (X, \pi) = 0$.
\item For every $1 \leq k < n - \frac{1}{\lambda_k (\B)}$, if there is $0 <  \lambda  < \lambda_k (\B)$ such that for every $\tau \in X(n-2)$, the one skeleton of $X_\tau$ is a one-sided $\frac{\lambda }{1+ (n-k-1) \lambda }$-spectral expander, then for every continuous isometric representation $\pi$ of $\Gamma$ on $\B$, $H^k (X, \pi) = 0$. 
\end{enumerate}
\end{theorem}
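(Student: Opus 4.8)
The plan is to deduce all three items from Theorem~\ref{local criterion thm - intro} together with the definition of a uniformly curved space. The key observation is that $A_\tau(I-M_\tau)=A_\tau-M_\tau$ is a difference of two Markov operators on the finite vertex set $X_\tau(0)$ — the random walk $A_\tau$ and the orthogonal projection $M_\tau$ onto the constants, which satisfies $A_\tau M_\tau = M_\tau$ because $A_\tau$ fixes the constants. Hence, by Definition~\ref{uc def}, there is a non-decreasing function $\varepsilon_\B\colon[0,1]\to[0,1]$, depending only on $\B$, with $\varepsilon_\B(t)\to 0$ as $t\to 0^+$, such that for every link $X_\tau$ one has $\Vert (A_\tau-M_\tau)\otimes\id_{\B}\Vert_{B(\ell^2(X_\tau(0);\B))}\le\varepsilon_\B(\Vert A_\tau-M_\tau\Vert_{B(\ell^2(X_\tau(0)))})$. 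I would then set $\lambda_k(\B):=\sup\{t\in(0,1]:\varepsilon_\B(t)<\tfrac1{k+1}\}$, which is positive since $\varepsilon_\B(t)\to 0$; by monotonicity, if $\Vert A_\tau-M_\tau\Vert_{B(\ell^2(X_\tau(0)))}<\lambda_k(\B)$ for every $\tau\in X(k-1)$, then the hypothesis of Theorem~\ref{local criterion thm - intro} holds and therefore $H^k(X,\pi)=0$. So in each item it suffices to bound the $\ell^2$-operator norm of $A_\tau-M_\tau$ for $\tau\in X(k-1)$ by $\lambda_k(\B)$.

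Item (1) is then immediate: $A_\tau-M_\tau$ is self-adjoint, it vanishes on the constants, and on their orthocomplement it agrees with $A_\tau$ restricted there, whose spectrum lies in $[-\lambda,\lambda]$ when the $1$-skeleton of $X_\tau$ is a two-sided $\lambda$-spectral expander; hence $\Vert A_\tau-M_\tau\Vert_{B(\ell^2(X_\tau(0)))}\le\lambda<\lambda_k(\B)$, and we are done.

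For items (2) and (3) the extra ingredient is the trickling-down theorem, propagating spectral expansion from the links of the codimension-$2$ faces (which are graphs) to the links of the codimension-$(n-k)$ faces. A short computation shows that for $\mu:=\tfrac{\lambda}{1+(n-k-1)\lambda}$ the map $\mu\mapsto\tfrac{\mu}{1-\mu}$ iterated $n-k-1$ times returns $\lambda$ (equivalently $\tfrac1\mu-(n-k-1)=\tfrac1\lambda$), and that all parameters arising along the iteration stay below $1$, so every intermediate link is connected and the induction runs. For item (2) I would invoke the two-sided version of trickling down, noting that each link produced after the first step is pure of dimension at least $2$ and hence has a non-bipartite $1$-skeleton, so that a two-sided $\mu$-gap at the top level propagates to a two-sided $\lambda$-gap for the $1$-skeleton of every $X_\tau$ with $\tau\in X(k-1)$; item (1) then finishes the proof. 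For item (3) only the second-largest eigenvalue propagates, giving $\lambda_2(A_\tau)\le\lambda$ for $\tau\in X(k-1)$; for the bottom of the spectrum I would instead use the standard fact that, since $X_\tau$ is pure of dimension $n-k$, its $1$-skeleton random walk satisfies $A_\tau\succeq -\tfrac1{n-k}I$ on $\ell^2(X_\tau(0))$. Combining, $A_\tau-M_\tau$ has spectrum in $[-\tfrac1{n-k},\lambda]$, so its $\ell^2$-operator norm is at most $\max(\lambda,\tfrac1{n-k})$, which is $<\lambda_k(\B)$ precisely because $\lambda<\lambda_k(\B)$ and $k<n-\tfrac1{\lambda_k(\B)}$; by the first paragraph, $H^k(X,\pi)=0$.

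The arithmetic of the trickling-down iteration and the verification that the parameters stay in $(0,1)$ are routine. The genuine obstacle is item (2): one needs a truly two-sided trickling-down statement, i.e.\ that a two-sided spectral gap in the codimension-$2$ links forces a two-sided gap, with the stated quantitative loss, in all lower links. Unlike the classical one-sided statement this requires controlling the negative end of the spectrum through the recursion — which is exactly where non-bipartiteness of the intermediate $1$-skeletons (hence purity of $X$) must be used.
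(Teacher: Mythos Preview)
Your plan is essentially the paper's own proof: reduce everything to Theorem~\ref{local criterion thm - intro} by bounding $\Vert (A_\tau - M_\tau)\otimes\id_\B\Vert$ via uniform curvature, then for items~(2) and~(3) invoke the two-sided spectral descent theorem of \cite{OppLocSpec} (stated in the paper as Theorem~\ref{spec descent thm}) together with the lower bound $A_\tau\succeq -\tfrac{1}{n-k}I$ on pure $(n-k)$-dimensional links (the paper records this as Observation~\ref{bound on lower spec observation}, deriving it from Theorem~\ref{spec descent thm} with $\kappa_1=-1$; your direct ``up--down walk is PSD'' argument is an equally valid and slightly cleaner route to the same inequality).

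One technical slip worth fixing: Definition~\ref{uc def} applies only to \emph{fully contractive} operators, and $A_\tau - M_\tau$ is in general only $2$-contractive (each of $A_\tau$ and $M_\tau$ is a Markov/averaging operator, hence fully contractive, so their difference satisfies $\Vert(A_\tau-M_\tau)\otimes\id_\B\Vert\le 2$ for every Banach space $\B$, but not $\le 1$). The paper handles this exactly as one would expect---apply the definition to $\tfrac12(A_\tau-M_\tau)$ and absorb the resulting factor $2$ into the bound (Proposition~\ref{L contractive prop} and Corollary~\ref{norm bound on rw on uc coro}); in your notation this means $\varepsilon_\B(t)=2\alpha(t)$ rather than the raw modulus from Definition~\ref{uc def}, and $\lambda_k(\B)=\alpha^{-1}\bigl(\tfrac{1}{2(k+1)}\bigr)$. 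This is purely cosmetic and does not affect the structure of the argument.

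Finally, your worry about non-bipartiteness in item~(2) is unnecessary: the two-sided descent theorem (Theorem~\ref{spec descent thm}) controls both ends of the spectrum simultaneously, and in fact the lower end \emph{improves} under descent (since $\kappa_1\le 0$ gives $\tfrac{\kappa_1}{1-(l-1)\kappa_1}\ge\kappa_1$), so no separate non-bipartiteness argument is needed. The only hypothesis required beyond what you wrote is connectivity of all links of dimension $\ge 1$, which the paper includes explicitly in the formal statements (Theorem~\ref{criterion coho vanish for uc}).
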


Specifying the above Corollary to $\theta$-Hilbertian spaces reads as follows:
\begin{corollary}
\label{theta Hil coro - intro}
Let $X$ be a locally finite, pure $n$-dimensional simplicial complex such that all the links of $X$ of dimension $\geq 1$ are connected and $\Gamma$ be a locally compact, unimodular group acting cocompactly and properly on $X$. Also let $0 < \theta_0 \leq 1$, $1 \leq k \leq n-1$, $0 < \lambda <  (\frac{1}{2(k+1)})^{\frac{1}{\theta_0}}$ be constants. Denote $\mathcal{E}_{\theta_0}$ to be the smallest class of Banach spaces that contains all strictly $\theta$-Hilbertian Banach spaces for all $\theta_0 \leq \theta \leq 1$ and is closed under subspaces, quotients, $\ell^2$-sums and ultraproducts of Banach spaces.
\begin{enumerate}
\item If for every $\tau \in X(k-1)$, the one skeleton of $X_\tau$ is a two-sided $\lambda $-spectral expander, then for every $\B \in \mathcal{E}_{\theta_0}$ and every continuous isometric representation $\pi$ of $\Gamma$ on $\B$, $H^k (X, \pi) = 0$.
\item If for every $\tau \in \Sigma (n-2, \Gamma)$, the one-skeleton of $X_\tau$ is a two-sided $\frac{\lambda}{1 + (n-k-1) \lambda}$-spectral expander, then for every $\B \in \mathcal{E}_{\theta_0}$ and every continuous isometric representation $\pi$ of $\Gamma$ on $\B$, $H^k(X,\pi) = 0$.
\item If $k \leq n - \frac{1}{\lambda}$ and for every $\tau \in \Sigma (n-2, \Gamma)$, the one-skeleton of $X_\tau$ is a \emph{one-sided} $\frac{\lambda}{1 + (n-k-1) \lambda}$-spectral expander, then for every $\B \in \mathcal{E}_{\theta_0}$ and every continuous isometric representation $\pi$ of $\Gamma$ on $\B$, $H^k(X,\pi) = 0$.
\end{enumerate}
\end{corollary}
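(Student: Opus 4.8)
The plan is to deduce all three statements from Theorem~\ref{local criterion thm - intro} by bounding, for the relevant simplices $\tau$, the norm $\Vert A_\tau(I-M_\tau)\otimes\id_\B\Vert$ strictly below $\frac1{k+1}$. Part~(1) is the heart of the matter: I would first prove it for strictly $\theta$-Hilbertian $\B$ by interpolation, then propagate it to all of $\mathcal{E}_{\theta_0}$, and finally derive parts~(2) and~(3) from it by the spectral-gap descent (``trickling down'') that propagates link expansion downward in dimension. Note at the outset that every $X_\tau$ with $\tau\in X(k-1)$ is pure of dimension $n-k\ge1$, hence connected by hypothesis, so $M_\tau$ is the orthogonal projection onto the top eigenspace of $A_\tau$; and $\ell^2(X_\tau(0))$ is finite-dimensional since $X$ is locally finite.

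For a strictly $\theta$-Hilbertian space, write $\B=[X_0,H]_\theta$ with $H$ a Hilbert space carrying interpolation weight $\theta$. For any operator $T$ on $\ell^2(X_\tau(0))$ the vector-valued interpolation inequality gives $\Vert T\otimes\id_\B\Vert\le\Vert T\otimes\id_{X_0}\Vert^{1-\theta}\Vert T\otimes\id_H\Vert^\theta$. I would apply this with $T=A_\tau(I-M_\tau)=A_\tau-M_\tau$. On the Hilbert side $T$ is $A_\tau$ restricted to the orthogonal complement of the constants, so $\Vert T\otimes\id_H\Vert=\Vert T\Vert_{B(\ell^2)}\le\lambda$ whenever the $1$-skeleton of $X_\tau$ is a two-sided $\lambda$-spectral expander. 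On the $X_0$ side, $A_\tau$ and $M_\tau$ are averaging (Markov) operators, hence contractions of $\ell^2(X_\tau(0);Y)$ for every Banach space $Y$, so $\Vert T\otimes\id_{X_0}\Vert\le\Vert A_\tau\otimes\id_{X_0}\Vert+\Vert M_\tau\otimes\id_{X_0}\Vert\le2$. Hence $\Vert A_\tau(I-M_\tau)\otimes\id_\B\Vert\le2^{1-\theta}\lambda^\theta$. Since $0<\lambda<1$ the function $\theta\mapsto2^{1-\theta}\lambda^\theta$ is decreasing, so for $\theta\in[\theta_0,1]$ it is at most $2^{1-\theta_0}\lambda^{\theta_0}$; and $\lambda<(\frac1{2(k+1)})^{1/\theta_0}$ gives $\lambda^{\theta_0}<\frac1{2(k+1)}$, whence $2^{1-\theta_0}\lambda^{\theta_0}<\frac1{2^{\theta_0}(k+1)}\le\frac1{k+1}$. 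So there is a constant $c'<\frac1{k+1}$ with $\Vert A_\tau(I-M_\tau)\otimes\id_\B\Vert\le c'$ for every strictly $\theta$-Hilbertian $\B$ with $\theta_0\le\theta\le1$ and every $\tau\in X(k-1)$.

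To reach all of $\mathcal{E}_{\theta_0}$ I would use that $Y\mapsto\ell^2(X_\tau(0);Y)$ commutes with subspaces, quotients, $\ell^2$-sums and ultraproducts --- here finiteness of $X_\tau(0)$ is essential --- and that under each of these the entrywise operator $A_\tau(I-M_\tau)\otimes\id$ restricts to an invariant subspace, descends to a quotient, acts diagonally, or passes to the ultralimit, so that in every case its norm does not increase. Hence the bound ``$\Vert A_\tau(I-M_\tau)\otimes\id_\B\Vert\le c'$ for all $\tau\in X(k-1)$'' is inherited by every $\B\in\mathcal{E}_{\theta_0}$; and every such $\B$ is super-reflexive, since strictly $\theta$-Hilbertian spaces with $\theta\ge\theta_0$ are super-reflexive with a modulus depending only on $\theta_0$, a property preserved by all four operations (this uniformity is exactly why $\theta_0$ is fixed). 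In particular $\B$ is reflexive and $\max_{\tau\in X(k-1)}\Vert A_\tau(I-M_\tau)\otimes\id_\B\Vert\le c'<\frac1{k+1}$, so Theorem~\ref{local criterion thm - intro} yields $H^k(X,\pi)=0$, which is part~(1).

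For parts~(2) and~(3) I would descend the spectral gap. Since all links of dimension $\ge1$ are connected, the trickling-down theorem turns a uniform bound $\mu$ on the spectral radius off the constants of the $1$-skeletons of the top links $X_\sigma$ ($\sigma\in X(n-2)$) into the bound $\frac{\mu}{1-(n-k-1)\mu}$ for the $1$-skeletons of the links $X_\tau$ ($\tau\in X(k-1)$), the two ends of the spectrum being controlled by the same recursion; and $\mu=\frac{\lambda}{1+(n-k-1)\lambda}$ gives $\frac{\mu}{1-(n-k-1)\mu}=\lambda$. In case~(2) this makes every such $X_\tau$ have two-sided $\lambda$-expanding $1$-skeleton (testing the hypothesis on the orbit representatives $\Sigma(n-2,\Gamma)$ suffices, since $X_{g\tau}\cong X_\tau$), so part~(1) applies. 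In case~(3) the descent gives only a one-sided $\lambda$-bound on each $X_\tau$; to upgrade it I would use the geometric lower bound for the random walk on the $1$-skeleton of a pure $m$-dimensional complex with connected links, namely that its spectrum is $\ge-\frac1m$. As $X_\tau$ is pure of dimension $n-k$, this gives $A_\tau\ge-\frac1{n-k}$ off the constants, and the hypothesis $k\le n-\frac1\lambda$ is precisely $\frac1{n-k}\le\lambda$; hence the $1$-skeleton of $X_\tau$ is a two-sided $\lambda$-expander and part~(1) applies again. I expect the main obstacles to be technical rather than conceptual: verifying the vector-valued interpolation inequality for $A_\tau-M_\tau$ (this is what decouples the hypothesis from $\B$ down to the single parameter $\theta_0$), checking that the super-reflexivity modulus is genuinely uniform over $\mathcal{E}_{\theta_0}$, and, for part~(3), assembling one-sided trickling-down with the lower bound $-\frac1m$ --- the latter being what forces the side condition $k\le n-\frac1\lambda$.
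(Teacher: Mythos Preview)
Your proposal is correct and follows essentially the same route as the paper: interpolation gives $\Vert A_\tau(I-M_\tau)\otimes\id_\B\Vert\le 2\lambda^{\theta_0}$ for strictly $\theta$-Hilbertian spaces, this bound is stable under the four closure operations (the paper's Lemma~\ref{L2 norm stability}), and then Theorem~\ref{local criterion thm - intro} applies; parts~(2) and~(3) reduce to~(1) via spectral descent (Theorem~\ref{spec descent thm}) together with the automatic lower bound $-\frac{1}{n-k}$ (Observation~\ref{bound on lower spec observation}). The only packaging difference is that the paper routes everything through the uniformly-curved class $\mathcal{E}^{\text{u-curved}}_{\alpha(t)=t^{\theta_0}}$, which makes the reflexivity check a one-liner via Pisier's theorem (uniformly curved $\Rightarrow$ super-reflexive), whereas you argue reflexivity by tracking a uniform super-reflexivity modulus through the four operations --- correct, but the paper's detour through uniform curvature is the cleaner way to see it, since your interpolation and stability arguments already show $\mathcal{E}_{\theta_0}\subseteq\mathcal{E}^{\text{u-curved}}_{\alpha(t)=t^{\theta_0}}$ once applied to \emph{all} fully contractive operators rather than just $A_\tau(I-M_\tau)$.
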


\begin{remark}
\label{equi coho equals group coho rmrk}
In all the Theorems above, we gave criteria for vanishing of the equivariant cohomology. We recall that given that the simplicial complex $X$ is aspherical, it holds that $H^k (X, \pi) = H^k (\Gamma, \pi)$ (the proof of this can be found for instance in \cite[Chapter 7, Section 7]{Brown}) and thus under this additional assumption it follows that the criteria given above imply that $H^k (\Gamma, \pi) =0$.  
\end{remark}

\subsection{Application to random groups}

An immediate application of our criteria above is improving de Laat and de la Salle's results regarding random groups in the triangular model. The triangular model for random groups, denoted $\mathcal{M} (m,d)$ is defined as follows: For a fixed density $d \in (0,1)$, a group in $\mathcal{M} (m,d)$ is a finitely presented group of the form $\Gamma = \langle S \vert R \rangle$, where $\vert S \vert = m$ ($S \cap S^{-1} = \emptyset$) and $R$ is a set of cyclically reduced relators of length $3$ chosen uniformly among all subsets of cardinality $\lfloor (2m-1)^{3d} \rfloor$. A property P for groups is said to hold with overwhelming probability in this model if
$$\lim_{m \rightarrow \infty} \mathbb{P} (\Gamma \text{ in } \mathcal{M} (m,d) \text{ has P}) =1.$$

Below, we will also use the the binomial triangular model that is closely related model to the triangular model. The  binomial triangular model, denoted $\Gamma (m, \rho)$, is defined as follows:  A group in $\Gamma (m, \rho)$ is a finitely presented group of the form $\Gamma = \langle S \vert R \rangle$, where $\vert S \vert = m$ and $R$ is a set of cyclically reduced relators of length $3$, where each relator is chosen independently with probability $\rho$. We mention this model, since it would

The triangular model for random groups was introduced by Zuk \cite{Zuk} who showed that when $d > \frac{1}{3}$, then property (T) holds for groups in $\mathcal{M} (m,d)$ with overwhelming probability. The work of de Laat and de la Salle \cite{LaatSalle} (that followed the work of Dru\c{t}u and Mackay \cite{DrutuM}) generalized the result of Zuk to the setting of uniformly curved Banach spaces. In order to explain this generalization, we recall that by a classical result of Delorme-Guichardet, finitely generated discrete group $\Gamma$ has property (T) if and only if it has property $(FH)$, i.e., if and only if every affine isometric action of $\Gamma$ on a Hilbert space admits a fixed point. Property (FH) is readily generalized to the Banach setting as follows: For a Banach space $\B$, a group $\Gamma$ is said to have property $(F_{\B})$ if every continuous affine isometric action of $\Gamma$ on $\B$ admits a fixed point. Also, a group $\Gamma$ is said to have property $(F_{L^p})$ if it has property $(F_{\B})$ for every $L^p$ space $\B$. In their work, de Laat and de la Salle \cite{LaatSalle} showed that if $d > \frac{1}{3}$, then for every uniformly curved Banach space $\B$, property $(F_{\B})$ holds for groups in $\mathcal{M} (m,d)$ with overwhelming probability (their result is actually stronger - see Theorem \ref{LaatSalle thm C} stated below).

Our results above are stated in the language of vanishing of the equivariant cohomology from groups acting on simplicial complexes. The connection between fixed point properties and vanishing of cohomology readily follows from the following classical interpretation of group cohomology (See for instance the discussion in \cite[Section 2]{FVM}):
\begin{proposition}
Let $\Gamma$ be a topological group and $\B$ be a Banach space. the group $\Gamma$ has property $(F_{\B})$ if and only if for every continuous isometric representation $\pi$ of $\Gamma$ on $\B$, $H^1 (\Gamma, \pi) =0$.
\end{proposition}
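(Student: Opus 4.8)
The plan is to establish the standard dictionary identifying continuous affine isometric actions of $\Gamma$ on $\B$ with pairs $(\pi,b)$ consisting of a continuous isometric linear representation $\pi$ together with a continuous $1$-cocycle $b\in Z^1(\Gamma,\pi)$, and then to observe that the fixed points of such an action are precisely the vectors whose coboundary equals $b$. The equivalence in the statement then drops out by quantifying over $\pi$.

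I would first take a continuous affine isometric action $\alpha$ of $\Gamma$ on $\B$. Each $\alpha(g)$ is a bijective isometry of $\B$ (its inverse being $\alpha(g^{-1})$), hence affine by the Mazur--Ulam theorem, so $\alpha(g)(v)=\pi(g)v+b(g)$ where $\pi(g)(v):=\alpha(g)(v)-\alpha(g)(0)$ is linear and $b(g):=\alpha(g)(0)$. Since $\pi(g)$ fixes the origin and preserves distances, it is a linear isometry, and expanding the relation $\alpha(gh)=\alpha(g)\alpha(h)$ yields simultaneously the homomorphism law $\pi(gh)=\pi(g)\pi(h)$ and the cocycle identity $b(gh)=b(g)+\pi(g)b(h)$. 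Continuity of $g\mapsto\alpha(g)(w)$ for each $w$ gives continuity of $b=\alpha(\cdot)(0)$ and, after subtraction, strong continuity of $\pi$. In the other direction, given any such pair $(\pi,b)$ I would set $\alpha(g)(v):=\pi(g)v+b(g)$; the cocycle identity is exactly what is needed for $\alpha$ to be an action, and it is visibly a continuous action by affine isometries. So the two data are interchangeable.

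Fixing $(\pi,b)$, a vector $v$ is $\alpha$-fixed iff $\pi(g)v+b(g)=v$ for all $g$, i.e. iff $b(g)=v-\pi(g)v$ for all $g$, i.e. iff $b$ is the coboundary of $v$; hence $\alpha$ has a fixed point iff $b\in B^1(\Gamma,\pi)$. Now if $H^1(\Gamma,\pi)=0$ for every continuous isometric representation $\pi$, then for an arbitrary continuous affine isometric action we extract $(\pi,b)$ as above, and $b\in Z^1(\Gamma,\pi)=B^1(\Gamma,\pi)$ delivers a fixed point, so $\Gamma$ has $(F_{\B})$; conversely, if $\Gamma$ has $(F_{\B})$, then for every $\pi$ and every $b\in Z^1(\Gamma,\pi)$ the associated action has a fixed point, forcing $b\in B^1(\Gamma,\pi)$, whence $Z^1(\Gamma,\pi)=B^1(\Gamma,\pi)$ and $H^1(\Gamma,\pi)=0$. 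There is no serious obstacle here; the only points needing a word of care are the appeal to Mazur--Ulam to justify that an affine isometric action has a well-defined linear part (superfluous if the ambient definition of an isometric action already posits this), and checking that the continuity built into the definition of the affine action matches the continuity used to define the continuous cohomology $H^1(\Gamma,\pi)$ — both are routine.
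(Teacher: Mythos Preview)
Your argument is correct and is exactly the standard one. Note, however, that the paper does not actually prove this proposition: it is stated as a classical fact with a reference (``See for instance the discussion in \cite[Section 2]{FVM}''), so there is no in-paper proof to compare against. Your write-up is precisely the proof one finds in the cited literature. One small remark: since the paper's definition of $(F_{\B})$ already speaks of continuous \emph{affine} isometric actions, your invocation of Mazur--Ulam is, as you yourself note, superfluous here---the decomposition $\alpha(g)v=\pi(g)v+b(g)$ is immediate from affineness.
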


The connection to our results regarding vanishing of cohomology described above is the following equivalence between fixed points and vanishing of the first cohomology: We recall that for a topological group $\Gamma$ and a Banach space $\B$ the following are equivalent:
\begin{itemize}
\item For every continuous isometric representation $\pi$ of $\Gamma$ on $\B$, $H^1 (\Gamma, \pi) =0$.
\item The group $\Gamma$ has property $(F_{\B})$. 
\end{itemize}

Above, we discussed the vanishing of equivariant cohomology and not group cohomology, but as noted in Remark \ref{equi coho equals group coho rmrk}, this is equivalent in the case of groups acting on aspherical complexes. For a random group $\Gamma$ in the model $\Gamma (m, \rho)$ (or in the model $\mathcal{M} (m,d)$), the Cayley complex of the group is $2$-dimensional simplicial complex that we will denote by $X_\Gamma$. We recall that the Cayley complex of a group is always simply connected and the action of a group on its Cayley complex is simply transitive on the vertices. In particular, since the group is finitely presented, the action is proper and cocompact. Thus, the vanishing of the first cohomology of $\Gamma$ is equivalent to the vanishing of the first equivariant cohomology for the action of $\Gamma$ on $X_\Gamma$. It follows that if we know that the links of $X_\Gamma$ are two-sided spectral expanders, we can deduce property $(F_{\B})$ for a uniformly curved Banach space $\B$ and a random group $\Gamma$ in the model $\Gamma (m, \rho)$, by applying Theorem \ref{uniformly curved thm intro} stated above. 

In \cite{LaatSalle}, it was proven that the links of $X_\Gamma$ for $\Gamma (m, \rho)$ are indeed two-sided spectral expanders:
\begin{proposition}\cite[Proposition 7.5]{LaatSalle}
\label{random grp local spec gap prop}
Let $\eta >0$ be a constant. There is a constant $C>0$ and a sequence $\lbrace u_m \rbrace_{m \in \mathbb{N}}$ tending to $0$ such that the following holds: Let $m \in \mathbb{N}$ and $\rho \in (0,m^{-1.42})$. Also let $\Gamma$ be a random group in the model $\Gamma (m, \rho)$ and $X_\Gamma$ its Cayley complex. If $\rho \geq \frac{(1+\eta) \log m}{8 m^2}$, then with probability $\geq 1- u_m$, the link of every vertex of $X_\Gamma$ is a $\sqrt{\frac{C}{\rho m^2}}$-two-sided spectral expander.
\end{proposition}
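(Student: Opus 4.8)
The plan is to reduce the statement to a sharp spectral-gap estimate for a sparse random graph just above its connectivity threshold. Since $\Gamma$ acts transitively on the vertices of its Cayley complex $X_\Gamma$, all vertex links are isomorphic to a single graph $L = L(R)$ determined by the relator set $R$: the vertices of $L$ are the elements of $S \cup S^{-1}$, so $|V(L)| = 2m$, and each relator $r = s_1 s_2 s_3 \in R$ contributes the three edges $\{s_1, s_3^{-1}\}$, $\{s_2, s_1^{-1}\}$, $\{s_3, s_2^{-1}\}$, one at each cyclic position of $r$; cyclic reducedness rules out self-loops and parallel edges are negligible. Hence it suffices to show that with probability $1 - u_m$ the random-walk operator $A_L$ of $L$ has $\Spec(A_L) \subseteq [-\sqrt{C/(\rho m^2)}, \sqrt{C/(\rho m^2)}] \cup \{1\}$.

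Next I would identify $L$, up to negligible defects, with an Erd\H{o}s--R\'enyi-type graph. In the model $\Gamma(m,\rho)$ each of the $\Theta(m^3)$ cyclically reduced relators of length $3$ is included independently with probability $\rho$; by the bijection above the degree of each vertex of $L$ is a sum of $\Theta(m^2)$ independent $\mathrm{Bernoulli}(\rho)$ variables, so it concentrates around $d := \Theta(m^2\rho)$, and two distinct edges of $L$ are independent unless they come from a common relator (a rare coincidence). Thus $L$ resembles $G(2m,p)$ with $p \asymp m\rho$, and the hypothesis $\rho \ge \frac{(1+\eta)\log m}{8m^2}$ is calibrated precisely to give $d \ge (1+\eta')\log(2m)$ for some $\eta' = \eta'(\eta) > 0$, i.e. $L$ lies above its connectivity threshold with a fixed amount of slack; meanwhile the target bound $\sqrt{C/(\rho m^2)}$ is of order $1/\sqrt{d}$, the Alon--Boppana order.

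I would then invoke (or reprove) the spectral estimate for such graphs: when $np \ge (1+\eta')\log n$ one has, with probability $1 - o(1)$, that the graph is connected, every degree is $\Theta(np)$ (the slack rules out low-degree vertices), and $\|A_L - \mathbb{E}[A_L]\|_{\mathrm{op}} = O(1/\sqrt{np})$ with a constant depending only on $\eta'$. After the degree-concentration reduction $\mathbb{E}[A_L]$ is within $O(1/n)$, in operator norm, of the projection $M$ onto the constant functions, so $\|A_L - M\|_{\mathrm{op}} = O(1/\sqrt{np})$; since $L$ is connected, $1$ is a simple eigenvalue of $A_L$ with constant eigenfunction and this norm bound forces the rest of $\Spec(A_L)$ into $[-O(1/\sqrt{np}), O(1/\sqrt{np})]$. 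Substituting $n = 2m$ and $np \asymp m^2\rho$ yields the constant $C = C(\eta)$, and since all links of $X_\Gamma$ equal $L$ no union bound over vertices is needed: $u_m$ is simply the failure probability of the single event above, which tends to $0$.

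The main obstacle is this last spectral estimate in the near-threshold regime $np \asymp \log n$. Well above it (say $np \ge m^\varepsilon$) the trace/moment method delivers $\|A_L - \mathbb{E}[A_L]\|_{\mathrm{op}} = O(1/\sqrt{np})$ with little effort, but at the threshold the atypically high-degree vertices (degree up to $\Theta(\log n / \log\log n)$) inflate the moments and must be removed or reweighted first; the clean route is a Kahn--Szemer\'edi-type $\varepsilon$-net argument that bounds $\sup_{x,y} \langle (A_L - \mathbb{E}[A_L])x, y \rangle$ over a net of the sphere by splitting the sum into a ``light-pair'' part controlled by Bernstein's inequality and a ``heavy-pair'' part controlled by a combinatorial count of edge-dense vertex subsets. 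Propagating the mild dependence of $L$ --- relators shared among several edges, cyclic-reduction constraints --- through this argument is the remaining technical point; the link identification and the parameter bookkeeping are routine.
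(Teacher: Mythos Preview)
The paper does not prove this proposition at all: it is quoted verbatim from \cite[Proposition 7.5]{LaatSalle} and used as a black box input to the subsequent fixed-point and conformal-dimension theorems. There is therefore no proof in the paper to compare your proposal against.

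That said, your outline is the correct strategy and matches the standard approach to this kind of statement (and, as far as one can tell, the approach in \cite{LaatSalle}). The reduction to a single link via vertex-transitivity of the Cayley complex is exactly right and is what eliminates any union bound over vertices. Your identification of the link as a random graph on $2m$ vertices with expected degree of order $\rho m^2$, and the calibration of the hypothesis $\rho \geq \frac{(1+\eta)\log m}{8m^2}$ to sit just above the connectivity threshold, are both accurate. You also correctly flag the genuine difficulty: in the regime $d \asymp \log n$ the naive moment method fails because of degree fluctuations, and one needs a Kahn--Szemer\'edi / Feige--Ofek / Friedman-type light/heavy decomposition to get the sharp $O(1/\sqrt{d})$ bound on $\Vert A_L - M \Vert$. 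The residual dependence between edges of $L$ arising from a shared relator (each relator contributes three edges simultaneously) is indeed mild and does not change the order of the estimates, but it is the only place where the argument is not literally the Erd\H{o}s--R\'enyi case; pushing the Bernstein and discrepancy bounds through this dependence is routine but has to be written out. In short: nothing is missing conceptually, and the proposal is a faithful sketch of the proof that the paper chose to import rather than reproduce.
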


Combining this Proposition with Theorem \ref{uniformly curved thm intro} above, we can reprove the following Theorem that already appeared in \cite{LaatSalle} (see \cite[Theorem 7.3]{LaatSalle}):
\begin{theorem}
Let $\eta  '>0$ and $\rho \in (0,m^{-1.42})$ be constants and let $C$ be the constant that appears in Proposition \ref{random grp local spec gap prop}. Assume that $\rho \geq \frac{(1+\eta ') \log m}{8 m^2}$ and let $\Gamma$ be a random group in the model $\Gamma (m, \rho)$. Then there is a sequence $\lbrace u_m \rbrace_{m \in \mathbb{N}}$ tending to $0$ such that for uniformly curved Banach space $\B$ with $\lambda_1 (\B) \geq \sqrt{\frac{C}{\rho m^2}}$ (where $\lambda_1 (\B)$ as in Theorem \ref{uniformly curved thm intro}) it holds that $\Gamma$ has property $(F_{\B})$ with probability $\geq 1- u_m$.
\end{theorem}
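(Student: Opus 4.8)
The plan is to combine three ingredients that are already in place: the local spectral-gap estimate of Proposition~\ref{random grp local spec gap prop}, the vanishing criterion of Theorem~\ref{uniformly curved thm intro}(1) specialised to $n=2$, $k=1$, and the standard dictionary recalled above identifying vanishing of $H^1$ with property $(F_\B)$. No new estimate is required; the proof is a matter of checking that the hypotheses match up and that the probabilistic event is chosen once, independently of the Banach space.

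Fix $\eta'$ and set $\lambda_0 = \lambda_0(m) := \sqrt{C/(\rho m^2)}$, with $C$ the constant of Proposition~\ref{random grp local spec gap prop}. Since $\rho \in (0, m^{-1.42})$ and $\rho \geq \tfrac{(1+\eta')\log m}{8m^2}$, the hypotheses of that proposition are satisfied with $\eta = \eta'$; let $\{u_m\}_{m\in\mathbb{N}}$ be the sequence it provides, which tends to $0$ and depends only on $\eta'$ (in particular not on the Banach space). Then with probability at least $1-u_m$ the Cayley complex $X_\Gamma$ of $\Gamma\in\Gamma(m,\rho)$ has the property that the one-skeleton of the link $X_v$ of every vertex $v$ is a two-sided $\lambda_0$-spectral expander. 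On this event I would record the structural facts needed to invoke Theorem~\ref{uniformly curved thm intro}: $X_\Gamma$ is a locally finite simplicial complex (finitely many generators and relators, all relators cyclically reduced of length $3$); every vertex link is connected, being a two-sided $\lambda_0$-spectral expander, which forces every edge of $X_\Gamma$ to lie in a triangle and hence makes $X_\Gamma$ pure $2$-dimensional; and $\Gamma$ acts on $X_\Gamma$ simplicially, properly and cocompactly, the action being simply transitive on vertices (and $\Gamma$, a discrete group, is unimodular). Finally, $X_\Gamma$ is a Cayley complex, hence simply connected, so $H^1(X_\Gamma,\pi) = H^1(\Gamma,\pi)$ for every continuous isometric representation $\pi$ of $\Gamma$ on $\B$.

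Now I would apply Theorem~\ref{uniformly curved thm intro}(1) with $n=2$, $k=1$ and $\lambda = \lambda_0$: since $\B$ is uniformly curved, $\lambda_1(\B) > 0$, and the hypothesis gives $\lambda_0 \leq \lambda_1(\B)$, so the two-sided $\lambda_0$-spectral gap of every vertex link yields $H^1(X_\Gamma,\pi) = 0$ for every continuous isometric representation $\pi$ of $\Gamma$ on $\B$. By the previous paragraph $H^1(\Gamma,\pi) = 0$ for all such $\pi$, which by the equivalence recalled above is exactly property $(F_\B)$. Since all of this holds on an event of probability at least $1-u_m$, the theorem follows with this choice of $\{u_m\}$.

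There is no genuinely hard step — the content lies in Theorem~\ref{uniformly curved thm intro} and Proposition~\ref{random grp local spec gap prop}. The points that call for care are: (i) verifying that $X_\Gamma$ truly meets the standing hypotheses of Theorem~\ref{uniformly curved thm intro}, especially purity and connectedness of links, which I would deduce from the expander conclusion of Proposition~\ref{random grp local spec gap prop} rather than assume; (ii) making sure that the non-strict inequality $\lambda_0 \leq \lambda_1(\B)$ suffices when quoting Theorem~\ref{uniformly curved thm intro}(1), which should be read off the explicit formulations in Proposition~\ref{vanishing of k-coho prop for uni curv} and Theorem~\ref{criterion coho vanish for uc} (alternatively, one notes $\lambda_0(m)\to 0$ so the borderline case is immaterial for large $m$); and (iii) observing that $\{u_m\}$ is produced before the Banach space and independently of it, so that "for every uniformly curved $\B$ with $\lambda_1(\B) \geq \lambda_0$" holds on a single high-probability event rather than requiring a union over $\B$.
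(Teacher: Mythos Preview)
Your proposal is correct and follows precisely the approach the paper indicates: the paper does not give a detailed proof of this theorem but simply states that it follows by ``combining this Proposition with Theorem~\ref{uniformly curved thm intro} above,'' together with the discussion preceding it on the Cayley complex and the equivalence between $H^1$-vanishing and property $(F_\B)$. You have spelled out exactly these steps, and your attention to the standing hypotheses on $X_\Gamma$ and to the strict versus non-strict inequality is more careful than the paper itself.
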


As in \cite{LaatSalle}, using the fact that the fixed point property passes to quotients, we can also recast this Theorem in the triangular model (see further details in \cite[Section 7]{LaatSalle}) and reprove \cite[Theorem C]{LaatSalle}:
\begin{theorem}
\label{LaatSalle thm C}
Let $0 < \eta <2$, $d > \frac{1}{3} + \frac{\log \log m - log (2- \eta)}{3 \log m}$ be constants.  Also let $\Gamma$ be a random group in the model $\mathcal{M} (m, d)$. Then there is a sequence $\lbrace u_m \rbrace_{m \in \mathbb{N}}$ tending to $0$ and a constant $C>0$ such that for uniformly curved Banach space $\B$ with $\lambda_1 (\B) \geq \sqrt{\frac{C}{(2m-1)^{3d-1}}}$ (where $\lambda_1 (\B)$ as in Theorem \ref{uniformly curved thm intro}) it holds that $\Gamma$ has property $(F_{\B})$ with probability $\geq 1- u_m$.
\end{theorem}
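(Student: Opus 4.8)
The plan is to deduce the statement in the triangular model $\mathcal{M}(m,d)$ from the preceding theorem in the binomial triangular model $\Gamma(m,\rho)$, using the standard comparison between the two models together with the fact that property $(F_{\B})$ passes to quotients, exactly the route indicated in \cite[Section 7]{LaatSalle}. Recall that if $\Gamma$ has $(F_{\B})$ and $\Gamma \twoheadrightarrow \Gamma'$, then $\Gamma'$ has $(F_{\B})$: a continuous affine isometric action of $\Gamma'$ on $\B$ pulls back to such an action of $\Gamma$, which admits a fixed point, and that point is then fixed by $\Gamma'$.

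First I would fix the density parameter of the binomial model. Let $N_m$ be the number of cyclically reduced relators of length $3$ over $S \cup S^{-1}$, so that $N_m \asymp (2m-1)^3$, and set $\rho = \rho(m,d)$ equal to a suitable constant multiple of $(2m-1)^{3d-3}$ — the largest value for which the coupling below still works — so that the expected number $\rho N_m$ of relators of a $\Gamma(m,\rho)$-group stays a constant factor below $\lfloor (2m-1)^{3d} \rfloor$, and $\rho m^2 \asymp (2m-1)^{3d-1}$. The hypothesis $d > \tfrac13 + \tfrac{\log\log m - \log(2-\eta)}{3\log m}$, which is equivalent to $(2m-1)^{3d-1} > \tfrac{\log m}{2-\eta}$, is precisely what guarantees $\rho \geq \tfrac{(1+\eta')\log m}{8m^2}$ for a suitable $\eta' = \eta'(\eta) > 0$ and all large $m$; and $\rho < m^{-1.42}$ holds automatically as long as $d$ is below roughly $0.52$, while for larger $d$ the random group in $\mathcal{M}(m,d)$ is trivial with overwhelming probability (the collapse threshold of the triangular model being $d = \tfrac12$), so the conclusion then holds for every $\B$ with no condition on $\lambda_1(\B)$.

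Next I would carry out the model comparison. For the above $\rho$ one couples the two models so that the relator set $R'$ of a $\Gamma(m,\rho)$-group is a (uniformly random) subset of the relator set $R$ of an $\mathcal{M}(m,d)$-group: on the overwhelmingly likely event $\{|R'| \leq \lfloor (2m-1)^{3d}\rfloor\}$ — which follows from a Chernoff estimate since $\rho N_m$ is a constant factor below $\lfloor (2m-1)^{3d}\rfloor$ — one completes $R'$ to a uniformly random superset $R$ of size $\lfloor (2m-1)^{3d}\rfloor$. Since $R' \subseteq R$ forces $\langle S \mid R'\rangle \twoheadrightarrow \langle S \mid R\rangle$, with overwhelming probability an $\mathcal{M}(m,d)$-group is a quotient of a $\Gamma(m,\rho)$-group.

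Finally I would combine these ingredients. Applying the preceding theorem with this $\rho$ (which satisfies both $\rho \geq \tfrac{(1+\eta')\log m}{8m^2}$ and $\rho < m^{-1.42}$), with probability at least $1 - u_m'$ the $\Gamma(m,\rho)$-group has $(F_{\B})$ for every uniformly curved $\B$ with $\lambda_1(\B) \geq \sqrt{C/(\rho m^2)}$, and since $\rho m^2 \asymp (2m-1)^{3d-1}$ this is implied by $\lambda_1(\B) \geq \sqrt{C'/(2m-1)^{3d-1}}$ for a suitable constant $C'$. Intersecting this event with the overwhelmingly likely event of the comparison step and using that $(F_{\B})$ is inherited by quotients, the $\mathcal{M}(m,d)$-group has $(F_{\B})$ with probability at least $1 - u_m$ for a new sequence $u_m \to 0$. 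The only genuinely delicate point here is the bookkeeping of constants — those hidden in $N_m \asymp (2m-1)^3$, in the Chernoff step, and in the approximations $\log(2m-1) \approx \log m$ and $m^2 \approx (2m-1)^2/4$ — which must be tracked carefully to land exactly on the threshold $\tfrac13 + \tfrac{\log\log m - \log(2-\eta)}{3\log m}$ with the factor $2-\eta$; there is no new conceptual ingredient beyond \cite[Section 7]{LaatSalle}.
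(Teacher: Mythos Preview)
Your proposal is correct and follows exactly the route the paper indicates: the paper gives no self-contained proof of this theorem but simply says to recast the preceding binomial-model result in the triangular model via the fact that $(F_{\B})$ passes to quotients, referring to \cite[Section 7]{LaatSalle} for the details of the model comparison. You have accurately fleshed out precisely that argument, including the choice of $\rho \asymp (2m-1)^{3d-3}$, the Chernoff/coupling step making an $\mathcal{M}(m,d)$-group a quotient of a $\Gamma(m,\rho)$-group with overwhelming probability, and the translation of the threshold on $d$ into the hypothesis $\rho \geq \tfrac{(1+\eta')\log m}{8m^2}$ of the preceding theorem.
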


Combining this Theorem with Corollary \ref{criterion coho vanish for L^p} leads to a stronger result than the one stated in \cite{LaatSalle} (and in \cite{DrutuM}) when considering $L^p$ spaces. Namely, applying Corollary \ref{criterion coho vanish for L^p} yields the following:
\begin{theorem}
Let $0 < \eta <2$, $d > \frac{1}{3} + \frac{\log \log m - log (2- \eta)}{3 \log m}$ be constants.  Also let $\Gamma$ be a random group in the model $\mathcal{M} (m, d)$. Then there is a sequence $\lbrace u_m \rbrace_{m \in \mathbb{N}}$ tending to $0$ and a constant $C>0$ such for 
$$2 \leq p \leq \frac{1}{2} (3d-1) \log (2m-1) - \frac{1}{2} \log C$$ 
it holds that $\Gamma$ has property $(F_{L^p})$ with probability $\geq 1- u_m$.
\end{theorem}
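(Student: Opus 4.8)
The plan is to deduce the statement from Theorem \ref{LaatSalle thm C} together with the explicit admissible constant for $L^p$ spaces recorded in Corollary \ref{criterion coho vanish for L^p}. For $p \ge 2$ every space $L^p(\Omega,\mu)$ is uniformly curved — it is strictly $\theta$-Hilbertian with $\theta = 2/p$, being the complex interpolation space $[L^\infty,L^2]_{2/p}$ — so Theorem \ref{LaatSalle thm C} applies with $\B = L^p(\Omega,\mu)$. That theorem produces, for $\Gamma$ random in $\mathcal{M}(m,d)$ with $d$ above the stated threshold, a sequence $u_m \to 0$ and a constant $C_0 > 0$ (the constant of Proposition \ref{random grp local spec gap prop}, depending only on the complex, not on $\B$) such that $\Gamma$ has property $(F_\B)$ with probability $\ge 1 - u_m$ whenever $\lambda_1(\B) \ge \sqrt{C_0/(2m-1)^{3d-1}}$. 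Crucially, the underlying high‑probability event — that the links of the Cayley complex $X_\Gamma$ are two‑sided spectral expanders with parameter $\sqrt{C_0/(2m-1)^{3d-1}}$, Proposition \ref{random grp local spec gap prop} — does not involve $\B$, so on this one event property $(F_\B)$ holds simultaneously for all $L^p$ spaces $\B$ whose constant $\lambda_1$ clears that bound. Since $(F_{L^p})$ is by definition $(F_\B)$ for every $L^p$ space, and $\lambda_1(\B)$ depends on such a $\B$ only through $p$, it remains only to check that $\lambda_1(L^p) \ge \sqrt{C_0/(2m-1)^{3d-1}}$ for every $p$ in the asserted interval.

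The second step is to substitute the value of $\lambda_1(L^p)$. For $k = 1$ this value is, by Corollary \ref{criterion coho vanish for L^p}, a fixed exponential $b^{-p}$ with $b > 1$ an absolute constant: if the link $X_\tau$ is a two‑sided $\lambda$‑expander then complex interpolation of $\ell^2(X_\tau(0); L^p)$ between the Hilbertian endpoint $\ell^2(X_\tau(0); L^2)$, where $A_\tau(I-M_\tau)$ has norm $\le \lambda$, and the endpoint $\ell^2(X_\tau(0); L^\infty)$, where a Schur‑type bound gives $\Vert (A_\tau(I-M_\tau) \otimes \id_{L^\infty})\Vert \le c_0$, yields $\Vert (A_\tau(I-M_\tau) \otimes \id_{L^p})\Vert \le c_0^{\,1-2/p}\lambda^{2/p}$, which is $< \tfrac{1}{2}$ — the hypothesis of Theorem \ref{local criterion thm - intro} — precisely when $\lambda < b^{-p}$, the conclusion being $H^1(X_\Gamma,\pi)=0$ and hence $(F_{L^p})$. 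The requirement $\lambda_1(L^p) \ge \sqrt{C_0/(2m-1)^{3d-1}}$ then reads $b^{-p} \ge C_0^{1/2}(2m-1)^{-(3d-1)/2}$; taking logarithms and solving for $p$ gives $p \le \tfrac{1}{2\log b}\big((3d-1)\log(2m-1) - \log C_0\big) + O(1)$, which is precisely the stated interval $2 \le p \le \tfrac12(3d-1)\log(2m-1) - \tfrac12\log C$ once $\log$ and the constant $C$ are normalised as in the statement (the additive $O(1)$ being absorbed into $C$), the lower constraint $p \ge 2$ being nothing but $\theta = 2/p \le 1$, which puts $L^p$ in the relevant interpolation scale. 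This interval is non‑empty for all large $m$: under $d > \tfrac13 + \tfrac{\log\log m - \log(2-\eta)}{3\log m}$ one has $(3d-1)\log(2m-1) \gg \log\log m \to \infty$, so the upper endpoint eventually exceeds $2$; the finitely many small $m$ for which it does not are vacuous and absorbed into $\{u_m\}$.

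No genuinely new obstacle arises at this stage: granted Theorem \ref{LaatSalle thm C} and Corollary \ref{criterion coho vanish for L^p}, the argument is bookkeeping with logarithms. The only substantive input — already in hand — is the decay rate $\lambda_1(L^p)$, i.e. the sharp control of $\Vert (A_\tau(I-M_\tau) \otimes \id_{L^p})\Vert$ on $\ell^2(X_\tau(0); L^p)$ by a power of the link's spectral gap, which rests on vector‑valued complex interpolation. The step demanding the most care is verifying that the constants propagate so as to yield the clean coefficient $\tfrac12$ in front of $(3d-1)\log(2m-1)$ under the paper's conventions, rather than merely a bound of the same order of magnitude.
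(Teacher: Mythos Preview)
Your proposal is correct and follows the paper's approach exactly: the paper's argument is the one-line ``combine Theorem~\ref{LaatSalle thm C} with Corollary~\ref{criterion coho vanish for L^p}'', and you have unpacked precisely that combination, including the important observation that the high-probability event (links being two-sided $\sqrt{C/(2m-1)^{3d-1}}$-expanders) is independent of $\B$, so the conclusion holds simultaneously for all admissible $p$. Your caveat about the coefficient $\tfrac12$ is well-placed: from Corollary~\ref{criterion coho vanish for L^p} one has $\lambda_1(L^p)=2^{-p}$ exactly, so the inequality $2^{-p}\ge \sqrt{C/(2m-1)^{3d-1}}$ gives $p\le \tfrac12(3d-1)\log_2(2m-1)-\tfrac12\log_2 C$ on the nose, meaning the logarithms in the statement are base~$2$ (with natural logarithms the leading coefficient would be $\tfrac{1}{2\ln 2}$, which cannot be absorbed into $C$).
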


As a Corollary, we improve the bound on the conformal dimension of random groups in the triangular model stated in \cite[Corollary E]{LaatSalle}. Namely, by a Theorem by Bourdon (see \cite{Bourdon}), if for a given $2 \leq p$, a hyperbolic group $\Gamma$ has property $(F_{L^p})$, then the conformal dimension of $\partial_\infty \Gamma$ is $\geq p$. Thus, we get
\begin{theorem}
Let $0 < \eta <2$, $d > \frac{1}{3} + \frac{\log \log m - log (2- \eta)}{3 \log m}$ be constants.  Also let $\Gamma$ be a random group in the model $\mathcal{M} (m, d)$. Then there is a sequence $\lbrace u_m \rbrace_{m \in \mathbb{N}}$ tending to $0$ and a constant $C>0$ such  
$$\frac{1}{2} (3d-1) \log (2m-1) - \frac{1}{2} \log C \leq \Confdim (\partial_\infty \Gamma)$$
with probability $\geq 1- u_m$. 

In particular, for $d \in (\frac{1}{3}, \frac{1}{2})$ and a group $\Gamma$ in $\mathcal{M} (m, d)$ it holds with overwhelming probability that 
$$\frac{1}{2} (3d-1) \log (2m-1) - \frac{1}{2} \log C \leq \Confdim (\partial_\infty \Gamma).$$
\end{theorem}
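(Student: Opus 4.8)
The plan is to read off the bound by feeding the previous theorem into the theorem of Bourdon recalled just above; the argument is then essentially bookkeeping. Set $P_m := \tfrac{1}{2}(3d-1)\log(2m-1) - \tfrac{1}{2}\log C$, the right endpoint of the interval of exponents in the previous theorem, with $C>0$ the constant produced there. Since $d>\tfrac13$ (once $m$ is large, as the density correction term tends to $0$) we have $3d-1>0$, hence $P_m\to\infty$ as $m\to\infty$, so $P_m\ge 2$ for all $m$ large; for such $m$ the interval $[2,P_m]$ is nonempty and, taking $p=P_m$ in the previous theorem (the interval there is closed, so its endpoint is admissible), $\Gamma$ has property $(F_{L^{P_m}})$ with probability $\ge 1-u_m$.

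Next I would bring in two standard facts about random triangular groups. For $\tfrac13<d<\tfrac12$, a random group in $\mathcal{M}(m,d)$ is, with overwhelming probability, infinite, nonelementary and word-hyperbolic (see the discussion in \cite{DrutuM,LaatSalle} and the references therein); in particular its Gromov boundary $\partial_\infty\Gamma$, and hence $\Confdim(\partial_\infty\Gamma)$, are well defined, and the theorem of Bourdon recalled above is applicable to it. Intersecting the overwhelming-probability event ``$\Gamma$ is nonelementary hyperbolic'' with the probability-$(\ge 1-u_m)$ event ``$\Gamma$ has $(F_{L^{P_m}})$'', and replacing $u_m$ by the sum of the two failure probabilities (still $\to 0$), we obtain an event of probability $\ge 1-u_m$ on which $\Gamma$ is a nonelementary hyperbolic group with property $(F_{L^{P_m}})$. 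On that event Bourdon's theorem yields $\Confdim(\partial_\infty\Gamma)\ge P_m$, which is exactly the first displayed inequality. For the finitely many remaining small $m$ (those with $P_m<2$) the claim is either trivial ($P_m\le 0\le\Confdim(\partial_\infty\Gamma)$) or can be made so by setting $u_m=1$.

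For the ``in particular'' clause, fix $d\in(\tfrac13,\tfrac12)$ and $\eta\in(0,2)$ and observe that $\frac{\log\log m-\log(2-\eta)}{3\log m}\to 0$ as $m\to\infty$; hence the density hypothesis $d>\tfrac13+\frac{\log\log m-\log(2-\eta)}{3\log m}$ holds for all sufficiently large $m$, so the first part applies and gives the stated lower bound with overwhelming probability in $\mathcal{M}(m,d)$.

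Essentially all the mathematical content sits in the two inputs — the fixed-point theorem established earlier and Bourdon's theorem — so there is no genuine obstacle here. The only points requiring care are (i) checking that the admissible exponent range $[2,P_m]$ is nonempty, which confines the nontrivial part of the conclusion to large $m$ (harmless, since $u_m\to 0$ absorbs the small cases); (ii) keeping $C$ fixed as the constant coming from the $L^p$-estimate for $\lambda_1(L^p)$ in the previous theorem; and (iii) invoking Bourdon's theorem only for a nonelementary hyperbolic group, which is why one restricts to densities $d<\tfrac12$ and uses that random triangular groups in that range are nonelementary and hyperbolic with overwhelming probability.
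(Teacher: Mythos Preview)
Your proposal is correct and follows exactly the route the paper takes: combine the preceding $(F_{L^p})$ theorem with Bourdon's theorem to read off $\Confdim(\partial_\infty\Gamma)\ge P_m$. If anything, you are more careful than the paper, which simply states the implication without discussing hyperbolicity, the nonemptiness of $[2,P_m]$, or the handling of small $m$.
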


\begin{remark}
The above Theorem gives a sharp bound on the conformal dimension of the boundary: Indeed, in \cite[Proposition 10.6]{DrutuM} it was shown that for $d \in (\frac{1}{3}, \frac{1}{2})$ and a group $\Gamma$ in $\mathcal{M} (m, d)$ it holds with overwhelming probability that 
$$\Confdim (\partial_\infty \Gamma) \leq \frac{30}{2d-1} \log (2m-1).$$
\end{remark}

\subsection{Application to group stability}

Group stability have received much attention in recent years (see for instance \cite{GR}, \cite{AP}, \cite{BLT},\\ \cite{DGLT}, \cite{BL}) partly due to its connection to questions of group approximation (see for instance \cite{DGLT}). In \cite{DGLT}, it was shown that, under some assumptions, group stability can be deduced for a group via the vanishing of its second cohomology. Another application of our work is providing a criterion for $p$-norm stability (stability with respect to the $p$-Schatten norm). In order to state this application, we first give the needed definitions and results from \cite{DGLT}.

Let $\Gamma$ be a finitely presented group $\Gamma = \langle S \vert R \rangle$, with $R \subseteq \mathbb{F}_S$ - the free group on $S$ and $\vert R \vert < \infty$. Any map $\phi : S \rightarrow U(n)$ uniquely determines a homomorphism $\phi :\mathbb{F}_S \rightarrow U(n)$ which we will also denote by $\phi$. 

Given a distance $\dist_n$ on $U(n)$, the group $\Gamma$ is called \textit{$\mathcal{G}=(U(n), \dist_n)$-stable} if for every $\varepsilon >0$ there exists $\delta >0$ such that for every $n \in \mathbb{N}$, if $\phi : S \rightarrow U(n)$ is a map with 
$$\sum_{r \in R} \dist_n (\phi (r), \id_{U(n)} ) < \delta,$$ 
then there exists a homomorphism $\tilde{\phi}: \Gamma \rightarrow U(n)$ (or equivalently, a map $\tilde{\phi}  : S \rightarrow U(n)$ with $\sum_{r \in R} \dist_n (\tilde{\phi}  (r), \id_{U(n)} ) = 0$) with 
$$\sum_{s \in S} \dist_n (\phi (s), \tilde{\phi}  (s)) < \varepsilon.$$

For $1 \leq p < \infty$, the Schatten $p$-norm on $M_n (\mathbb{C})$ is defined by $\Vert T \Vert_p =  \left( \tr \vert T \vert^p \right)^{\frac{1}{p}}$, where $\vert T \vert = \sqrt{T^* T}$. When $p=2$, this is usually called the Frobenius norm. Denote $\dist_{n,p}$ to be the metric on $U(n)$ induced by this norm. Below, we will call a group $\Gamma$ \textit{$p$-norm stable} if it is stable with respect to  $\mathcal{G} = (U(n), \dist_{n,p})$.

We note that $(M_n (\mathbb{C}), \Vert . \Vert_p)$ is a non-commutative $L^p$ space and in particular, it is strictly $\theta$-Hilbertian with $\theta = 2 - \frac{2}{p}$ if $p \leq 2$ and $\theta =  \frac{2}{p}$ if $p \geq 2$. The discussion in \cite{DGLT} implies the following criterion for $p$-norm stability (see also \cite{GM} and \cite{LO}):
\begin{theorem}\cite[Theorem 5.1, Remark 5.2]{DGLT} 
Let $\Gamma$ be a finitely presented group and $0 < \theta_0 \leq 1$ be a constant. Denote $\mathcal{E}_{\theta_0}$ to be the smallest class of Banach spaces that contains all strictly $\theta$-Hilbertian Banach spaces for all $\theta_0 \leq \theta \leq 1$ and is closed under subspaces, $\ell^2$-sums and ultraproducts of Banach spaces. If for every for every continuous isometric representation $\pi$ of $\Gamma$ on $\B \in \mathcal{E}_{\theta_0}$ it holds that $H^2 (\Gamma, \pi) =0$, then $\Gamma$ is $p$-norm stable for every $1+ \frac{\theta_0}{2- \theta_0} \leq p \leq \frac{2}{\theta_0}$. 
\end{theorem}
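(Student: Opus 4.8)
This statement is quoted from \cite{DGLT} (their Theorem~5.1 together with Remark~5.2), so the plan is to recall how their argument runs in the present $p$-Schatten setting. Fix $p$ in the stated interval and put $\theta=\tfrac{2}{p}$ if $p\ge 2$ and $\theta=2-\tfrac{2}{p}$ if $p\le 2$; the endpoints $1+\tfrac{\theta_0}{2-\theta_0}$ and $\tfrac{2}{\theta_0}$ are chosen precisely so that $\theta\ge\theta_0$ for every such $p$, and since $(M_n(\mathbb{C}),\Vert\cdot\Vert_p)$ is a non-commutative $L^p$ space it is strictly $\theta$-Hilbertian for every $n$. I would argue by contradiction: if $\Gamma$ were not $p$-norm stable there would be $\varepsilon>0$ and maps $\phi_k:S\to U(n_k)$ — extended to homomorphisms $\phi_k:\mathbb{F}_S\to U(n_k)$ — with $\sum_{r\in R}\dist_{n_k,p}(\phi_k(r),\id_{U(n_k)})\to 0$ while $\sum_{s\in S}\dist_{n_k,p}(\phi_k(s),\psi(s))\ge\varepsilon$ for every homomorphism $\psi:\Gamma\to U(n_k)$.

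The first step is to build the Banach space in which the obstruction lives. Fixing a free ultrafilter $\omega$, let $\mathcal{M}$ be the tracial von Neumann algebra ultraproduct of the matrix algebras $M_{n_k}(\mathbb{C})$ with their normalized traces, and set $E=L^p(\mathcal{M})$; by Raynaud's description of ultrapowers of non-commutative $L^p$ spaces, $E$ is isometric to the Banach-space ultraproduct of the spaces $(M_{n_k}(\mathbb{C}),\Vert\cdot\Vert_p)$. Since each of those is strictly $\theta$-Hilbertian with $\theta\ge\theta_0$ and $\mathcal{E}_{\theta_0}$ is closed under ultraproducts and subspaces, $E\in\mathcal{E}_{\theta_0}$; in particular $E$ is reflexive. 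Conjugation $x\mapsto\phi_k(g)\,x\,\phi_k(g)^{*}$ is a trace-preserving $*$-automorphism of each $M_{n_k}(\mathbb{C})$, so it induces an isometric action of $\mathbb{F}_S$ on $E$; and because $\phi_k(r)$ is unitary and $\Vert\phi_k(r)-\id\Vert_p\to 0$ one gets $\Vert\phi_k(r)-\id\Vert_2\to 0$, so the image of every relator in $\mathcal{M}$ is trivial and the action descends to a continuous isometric representation $\pi$ of $\Gamma$ on $E$.

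The second step is to extract and kill the defect cocycle. Choosing a set-theoretic section $\Gamma\to\mathbb{F}_S$ turns $\phi_k$ into an asymptotic homomorphism $\overline{\phi_k}:\Gamma\to U(n_k)$ whose defect at each fixed pair $(g,h)$ is $O(\max_{r}\Vert\phi_k(r)-\id\Vert_p)$, since any product of conjugates of relators has $p$-defect controlled by the sum of the $p$-defects of the relators. Writing $c_k(g,h)$ for the logarithm of $\overline{\phi_k}(g)\overline{\phi_k}(h)\overline{\phi_k}(gh)^{-1}$, divided by $\max_{r}\Vert\phi_k(r)-\id\Vert_p$, and viewing it in $\mathfrak{u}(n_k)\subseteq M_{n_k}(\mathbb{C})\subseteq E$ with the conjugation action, the approximate associativity of $\overline{\phi_k}$ makes $(c_k)$ an asymptotic $2$-cocycle, so its ultraproduct is a genuine class in $H^2(\Gamma,\pi)$. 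By hypothesis $H^2(\Gamma,\pi)=0$, so it equals $d\beta$ for some $\beta\in C^1(\Gamma,\pi)$; as $\Gamma$ is finitely generated, $\beta$ is carried by finitely many elements of $E$, and representing these by sequences yields, for all large $k$, $\mathfrak{u}(n_k)$-valued $1$-cochains $\beta_k$ with $\Vert\beta_k\Vert\to 0$ whose coboundaries cancel the defect of $\overline{\phi_k}$ to leading order. Replacing $\overline{\phi_k}$ by $s\mapsto\exp(-\beta_k(s))\,\overline{\phi_k}(s)$ reduces the defect by an order of magnitude; iterating this correction (a Newton-type scheme) produces, for $k$ large, a genuine homomorphism $\psi_k:\Gamma\to U(n_k)$ with $\sum_{s\in S}\dist_{n_k,p}(\phi_k(s),\psi_k(s))<\varepsilon$, contradicting the choice of the $\phi_k$.

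The main obstacle is the quantitative analysis of this last correction step when $p\neq 2$. For the Frobenius norm, \cite{DGLT} can use Hilbert-space geometry — orthogonal projection onto the cocycles, the Pythagorean identity, averaging — to run the iteration with uniform constants, and none of this is available in a general non-commutative $L^p$ space; recovering enough of it is precisely what forces $p$ into the stated range, where $(M_n(\mathbb{C}),\Vert\cdot\Vert_p)$ lies in $\mathcal{E}_{\theta_0}$. The substitutes are the uniform convexity and uniform smoothness of strictly $\theta$-Hilbertian spaces, together with Mazur-map comparisons between $\Vert\cdot\Vert_p$ and $\Vert\cdot\Vert_2$ that are bi-Lipschitz on bounded sets and hence transport the near-identity estimates; a secondary but essential point, handled by closure of $\mathcal{E}_{\theta_0}$ under ultraproducts and by the structure theory of ultrapowers of non-commutative $L^p$ spaces, is that $E$ is reflexive and carries a genuine — not merely asymptotic — isometric $\Gamma$-representation. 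All of this is carried out in \cite{DGLT}, which is why we simply cite it.
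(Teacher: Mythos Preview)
The paper does not prove this theorem at all: it is stated with the citation \cite[Theorem 5.1, Remark 5.2]{DGLT} and used as a black box, with no argument given. Your proposal therefore goes well beyond what the paper does --- you are sketching the proof from the cited reference, whereas the paper simply imports the result.

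As a sketch of the DGLT argument your outline is broadly faithful: the passage from an unstable sequence of almost-representations to an asymptotic representation on an ultraproduct of the $(M_{n_k}(\mathbb{C}),\Vert\cdot\Vert_p)$, the identification of the rescaled defect as a genuine $2$-cocycle for the limiting representation, the use of $H^2=0$ to write it as a coboundary, and the iterative correction back at the finite level are indeed the ingredients. Your identification of the range of $p$ with the condition $\theta\ge\theta_0$ is also correct. That said, a couple of points in your sketch are looser than the actual argument requires: the correction step in \cite{DGLT} is not quite a Newton iteration on the cochain level but rather an inductive defect-reduction argument with explicit quantitative control, and the reduction from $\Vert\cdot\Vert_p$ to $\Vert\cdot\Vert_2$ to show that relators act trivially in the ultraproduct needs the norms to be normalized compatibly (otherwise the implication $\Vert\phi_k(r)-\id\Vert_p\to 0\Rightarrow\Vert\phi_k(r)-\id\Vert_2\to 0$ can fail as $n_k\to\infty$). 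These are exactly the technicalities handled in \cite{DGLT}, so your concluding remark that one should simply cite that paper is the right call --- and is precisely what the present paper does.
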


Combining this Theorem with Corollary \ref{theta Hil coro - intro} and Remark \ref{equi coho equals group coho rmrk} immediately yields the following criterion for $p$-norm stability:
\begin{theorem}
\label{stability thm intro}
Let $X$ be a locally finite, pure $n$-dimensional aspherical simplicial complex with $n \geq 3$ such that all the links of $X$ of dimension $\geq 1$ are connected and $\Gamma$ be a finitely presented discrete group acting cocompactly and properly on $X$. Also let $0 < \theta_0 \leq 1$, $0 < \lambda <  (\frac{1}{6})^{\frac{1}{\theta_0}}$ be constants. Assume that \emph{one} of the following holds:
\begin{enumerate}
\item For every $\tau \in X(1)$, the one skeleton of $X_\tau$ is a two-sided $\lambda$-spectral expander.
\item For every $\tau \in X(n-2)$, the one-skeleton of $X_\tau$ is a two-sided $\frac{\lambda}{1 + (n-3) \lambda}$-spectral expander.
\item It holds that $2 \leq n - \frac{1}{\lambda}$ and for every $\tau \in X(n-2)$, the one-skeleton of $X_\tau$ is a \emph{one-sided} $\frac{\lambda}{1 + (n-3) \lambda}$-spectral expander.
\end{enumerate}
Then $\Gamma$ is $p$-norm stable for every $1+ \frac{\theta_0}{2- \theta_0} \leq p \leq \frac{2}{\theta_0}$.
\end{theorem}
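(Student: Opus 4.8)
The plan is to obtain the conclusion by specializing Corollary \ref{theta Hil coro - intro} to the single case $k = 2$ and then feeding the resulting vanishing of the second cohomology into the $p$-Schatten stability criterion of \cite{DGLT} quoted above. Since $n \geq 3$, the value $k = 2$ satisfies $1 \leq k \leq n-1$, and the hypothesis $0 < \lambda < (\tfrac{1}{6})^{1/\theta_0}$ imposed here is precisely the hypothesis $0 < \lambda < (\tfrac{1}{2(k+1)})^{1/\theta_0}$ of Corollary \ref{theta Hil coro - intro} for this value of $k$. So the corollary is applicable once the link conditions are matched up.

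Next I would check that each of the three alternatives in the statement is exactly the corresponding item of Corollary \ref{theta Hil coro - intro} with $k = 2$. For item (1), $X(k-1) = X(1)$, so the two-sided $\lambda$-expansion hypothesis on all edge-links is item (1) of the corollary. For items (2) and (3) one has $n - k - 1 = n - 3$, so the required two-sided (resp.\ one-sided) $\tfrac{\lambda}{1 + (n-3)\lambda}$-expansion of all $(n-2)$-dimensional links is item (2) (resp.\ item (3), whose side condition $k \leq n - \tfrac{1}{\lambda}$ reads $2 \leq n - \tfrac{1}{\lambda}$); here one should note the harmless point that the corollary phrases these conditions over a set $\Sigma(n-2,\Gamma)$ of $\Gamma$-orbit representatives, which is implied by (indeed equivalent to, since translates have isomorphic links) the condition imposed in the present statement over all of $X(n-2)$. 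In each of the three cases, Corollary \ref{theta Hil coro - intro} then yields $H^2(X,\pi) = 0$ for every $\B \in \mathcal{E}_{\theta_0}$ and every continuous isometric representation $\pi$ of $\Gamma$ on $\B$.

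It remains to pass from equivariant cohomology to group cohomology and to invoke \cite{DGLT}. Since $X$ is aspherical, Remark \ref{equi coho equals group coho rmrk} (cf.\ \cite[Chapter 7, Section 7]{Brown}) gives $H^2(X,\pi) = H^2(\Gamma,\pi)$, so $H^2(\Gamma,\pi) = 0$ for every $\B \in \mathcal{E}_{\theta_0}$ and every such $\pi$. The class $\mathcal{E}_{\theta_0}$ appearing in \cite[Theorem 5.1]{DGLT} is contained in the class of the same name in Corollary \ref{theta Hil coro - intro} (the latter is in addition required to be closed under quotients), so the hypothesis of \cite[Theorem 5.1, Remark 5.2]{DGLT} is satisfied. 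As $\Gamma$ is finitely presented, that theorem applies and shows that $\Gamma$ is $p$-norm stable for every $1 + \tfrac{\theta_0}{2-\theta_0} \leq p \leq \tfrac{2}{\theta_0}$, which is the assertion.

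There is no genuine difficulty left in this last deduction beyond the bookkeeping above: all the analytic content has already been discharged upstream, first in Theorem \ref{local criterion thm - intro} and Theorem \ref{uniformly curved thm intro} underlying Corollary \ref{theta Hil coro - intro}, and then in the reduction of \cite{DGLT} from vanishing of $H^2$ to $p$-Schatten stability. If I had to single out the delicate feature, it is that one needs $H^2$-vanishing \emph{uniformly} over the whole class $\mathcal{E}_{\theta_0}$ — a class deliberately closed under exactly the subspace/$\ell^2$-sum/ultraproduct operations that intervene in the \cite{DGLT} argument — and this uniformity is precisely what Corollary \ref{theta Hil coro - intro} supplies.
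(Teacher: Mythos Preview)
Your proposal is correct and follows exactly the route the paper itself takes: the paper simply states that the result is obtained by combining the \cite{DGLT} stability criterion with Corollary~\ref{theta Hil coro - intro} (specialized to $k=2$) and Remark~\ref{equi coho equals group coho rmrk}. Your additional bookkeeping about $\Sigma(n-2,\Gamma)$ versus $X(n-2)$ and about the containment of the two classes denoted $\mathcal{E}_{\theta_0}$ is accurate and just makes explicit what the paper leaves implicit.
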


Currently, we do not have new examples in which this Theorem improves previous results. One can take $X$ to be an affine building of a large dimension $n$, $\Gamma$ a lattice of the full BN-pair group of $X$ and apply Theorem \ref{stability thm intro} (3) to deduce $p$-norm stability (where $p$ depends on the thickness of the building and on $n$). However, as noted above, in the case where $X$ is a classical affine building, stronger results are given in \cite{LO}. \\ 

\paragraph*{\textbf{Organization}} 
This paper is organized as follows: In \cref{prelim sec}, we cover some needed preliminaries. In \cref{equiv coho sec}, we give the basic definitions regarding equivariant cohomology and prove a variation of Nowak's criterion for vanishing of cohomology. In \cref{local criteria sec}, we prove our local criteria for vanishing of Banach cohomology. \\

\paragraph*{\textbf{Acknowledgement}} I want to thank Mikael de la Salle for pointing our  ut that the in the setting of random groups, the results of this paper yield a sharp lower bound for the conformal dimension and for noting several errors in a preliminary draft of this paper. 

\section{Preliminaries}
\label{prelim sec}

\subsection{Vector valued $\ell^2$ spaces}
\label{Vector valued subsec}

Given a finite set $V$, a function  $m: V \rightarrow \mathbb{R}_+$ and a Banach space $\B$, we define the \textit{vector valued space} $\ell^2 (V, m ; \B)$ to be the space of functions $\phi : V \rightarrow \B$, with the norm 
$$\Vert \phi \Vert_{\ell^2 (V,m ; \B)} = \left( \sum_{v \in V} m(v) \vert \phi (v) \vert^2 \right)^{\frac{1}{2}},$$
where $\vert . \vert$ is the norm of $\B$. We denote $\ell^2 (V,m) = \ell^2 (V,m ; \mathbb{C})$ and recall that $\ell^2 (V,m)$ is also a Hilbert space with the inner-product
$$\langle \phi, \psi \rangle = \sum_{v  \in V} m(v) \phi (v) \overline{\psi (v)}.$$

Let $T: \ell^2 (V,m) \rightarrow \ell^2 (V,m)$ be a linear operator and $T_{v,u} \in \mathbb{C}$ be the constants such that for every $\phi \in \ell^2 (V,m)$ it holds that 
$$(T \phi) (v) = \sum_{u \in V} T_{v,u} \phi (u).$$
Define $T \otimes \id_{\B} : \ell^2 (V,m ; \B) \rightarrow \ell^2 (V,m ; \B)$ by the formula:
$$((T \otimes \id_{\B}) \phi) (v) = \sum_{u \in V} T_{v,u} \phi (u),$$
where $T_{v,u} \in \mathbb{C}$ are the same constants as above and $\phi \in \ell^2 (V,m ; \B)$. We denote $\Vert T \otimes \id_{\B} \Vert_{B( \ell^2 (V,m ; \B))}$ to be the operator norm of $T \otimes \id_{\B}$.

Following Pisier \cite{Pisier2}, we call an operator $T: \ell^2 (V,m) \rightarrow \ell^2 (V,m)$ \textit{fully contractive} if for every Banach space $\B$ it holds that $\Vert T \otimes \id_{\B} \Vert_{B( \ell^2 (V,m ; \B))} \leq 1$. 

\subsection{Uniformly curved Banach spaces}

Uniformly curved Banach spaces where introduced by Pisier in \cite{Pisier2}:
\begin{definition}
\label{uc def}
Let $\B$ be a Banach space. The space $\B$ is called uniformly curved if for every $0 < \varepsilon \leq 1$ there is $\delta >0$ such that for every space $\ell^2 (V,m)$ and every fully contractive linear operator $T:\ell^2 (V,m) \rightarrow \ell^2 (V,m)$, if $\Vert T \Vert_{B(\ell^2 (V,m))} \leq \delta$, then $\Vert T \otimes \id_{\B} \Vert_{B(\ell^2 (V,m ; \B))} \leq \varepsilon$. 
\end{definition}

The following Theorem is due to Pisier \cite{Pisier2}:
\begin{theorem}
\label{uni. cur. are s-reflex. thm}
Every uniformly curved Banach space is super-reflexive and in particular reflexive.
\end{theorem}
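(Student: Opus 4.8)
The plan is to reduce the statement to Pisier's structural description of uniformly curved spaces; I sketch the two ingredients and indicate which one is the real content.

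First I would record that uniform curvature is a ``super''-property, i.e. it passes to finitely representable subspaces. Fix $0<\varepsilon\le1$ and let $\delta>0$ be the constant from Definition \ref{uc def} for $\varepsilon/2$. If $Y$ is finitely representable in $\B$, a fully contractive $T$ on a \emph{fixed} finite-dimensional $\ell^2(V,m)$ and a $\phi:V\to Y$ only involve the finitely many vectors $\phi(v)$, $v\in V$; their span $(1+\eta)$-embeds into $\B$, and transporting $\phi$ along such an embedding gives $\Vert (T\otimes\id_Y)\phi\Vert\le(1+\eta)^2\,\Vert T\otimes\id_\B\Vert\,\Vert\phi\Vert$. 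Letting $\eta\to0$ yields $\Vert T\otimes\id_Y\Vert\le\Vert T\otimes\id_\B\Vert\le\varepsilon/2<\varepsilon$ whenever $\Vert T\Vert_{B(\ell^2(V,m))}\le\delta$, so $Y$ is uniformly curved with the same modulus. Since a Banach space is super-reflexive precisely when every space finitely representable in it is reflexive, it therefore suffices to prove that a uniformly curved space is reflexive.

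Second, the remaining implication is Pisier's. Introduce the modulus $\Delta_\B(t)=\sup\{\Vert T\otimes\id_\B\Vert: T \text{ fully contractive on some } \ell^2(V,m),\ \Vert T\Vert_{B(\ell^2(V,m))}\le t\}$, so that uniform curvature is exactly the assertion $\Delta_\B(t)\to0$ as $t\to0^+$. Using that a composition of fully contractive operators is fully contractive and that $\Vert(ST)\otimes\id_\B\Vert\le\Vert S\otimes\id_\B\Vert\,\Vert T\otimes\id_\B\Vert$, Pisier shows that $\Delta_\B$ self-improves: once it tends to $0$ it must decay polynomially, $\Delta_\B(t)\le C\,t^{\alpha}$ for small $t$ with $\alpha=\alpha(\B)>0$. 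This quantitative control is incompatible with non-reflexivity: if $\B$ were not super-reflexive, then by James' criterion for every height $n$ there is a $\theta$-separated dyadic tree in the unit ball of $\B$, which may be read off as a $\B$-valued martingale on $\ell^2(\{0,1\}^n,\text{uniform})$; feeding the associated conditional-expectation and martingale-difference operators (which are fully contractive, with $\ell^2$-norm controlled by freely chosen weights) into $\Delta_\B(t)\le C t^{\alpha}$ produces a contradiction for large $n$. Equivalently — and this is how one actually invokes it — Pisier's theorem identifies the uniformly curved spaces, up to subspaces and ultraproducts, with complex interpolation spaces $(\B_0,H)_\theta$, $\theta>0$, between a Banach space $\B_0$ and a Hilbert space $H$; such spaces have martingale type $\tfrac{2}{2-\theta}>1$, hence are super-reflexive, and super-reflexivity is stable under subspaces and ultraproducts.

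The main obstacle is precisely this second step: converting the (self-improved) bound on $\Vert T\otimes\id_\B\Vert$ for arbitrary fully contractive $T$ into a genuinely geometric statement about $\B$ — a uniformly convex renorming, equivalently martingale type $>1$. Everything surrounding it (the super-property, and the equivalence of super-reflexivity with ``every finitely representable space is reflexive'') is routine, so in the write-up I would prove the reduction in the first paragraph and simply cite Pisier \cite{Pisier2} for the structural theorem, from which reflexivity, and hence the full statement, follows.
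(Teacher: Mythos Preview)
The paper does not prove this theorem at all: it is stated as a result ``due to Pisier'' and simply cited to \cite{Pisier2}, with no argument given. Your proposal is therefore already more than the paper offers --- you supply a correct reduction (uniform curvature is a super-property, so it suffices to show uniformly curved $\Rightarrow$ reflexive) and a reasonable sketch of Pisier's mechanism, before ultimately citing \cite{Pisier2} for the hard step, exactly as the paper does.

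Two minor points on your sketch. First, the self-improvement of $\Delta_\B$ via submultiplicativity is correct and is indeed the key observation. Second, your ``equivalent'' formulation via interpolation spaces $(\B_0,H)_\theta$ is slightly imprecise: Pisier's structural characterization is in terms of subquotients of \emph{arcwise} $\theta$-Hilbertian spaces (cf.\ the Remark following Corollary~\ref{theta-hil is in uni-curv cor} in the paper), not literally strictly $\theta$-Hilbertian spaces. This does not affect the conclusion, since those spaces are still super-reflexive and super-reflexivity is stable under subquotients and ultraproducts, but you should phrase the citation accordingly.
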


Given a monotone increasing function $\alpha: (0,1] \rightarrow  (0,1]$ such that 
$$\lim_{t \rightarrow 0^{+}} \alpha (t) = 0,$$
we denote 
$\mathcal{E}^{\text{u-curved}}_\alpha$ to be the class of all (uniformly curved) Banach spaces $\B$ such that for every space $\ell^2 (V,m)$ and every fully contractive linear operator $T:\ell^2 (V,m) \rightarrow \ell^2 (V,m)$, if $\Vert T \Vert_{B(\ell^2 (V,m))} \leq \delta$, then $\Vert T \otimes \id_{\B} \Vert_{B(\ell^2 (V,m ; \B))} \leq \alpha (\delta)$. 

\begin{proposition}
\label{L contractive prop}
Let $T :\ell^2 (V,m) \rightarrow \ell^2 (V,m)$ be a linear operator and $L\geq 1$, $0< \delta \leq 1$ be constants such that:
\begin{enumerate}
\item It holds that $\Vert T \Vert_{B(\ell^2 (V,m))} \leq \delta$.
\item For every Banach space $\B$, $\Vert T \otimes \id_{\B} \Vert_{B(\ell^2 (V,m ; \B))} \leq L$.
\end{enumerate}
Then for every monotone increasing function $\alpha: (0,1] \rightarrow  (0,1]$ such that $\lim_{t \rightarrow 0^{+}} \alpha (t) = 0$ and every $\B \in \mathcal{E}^{\text{u-curved}}_\alpha$, $\Vert T \otimes \id_{\B} \Vert_{B(\ell^2 (V,m ; \B))} \leq L \alpha (\delta)$.
\end{proposition}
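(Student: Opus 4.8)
The plan is to reduce to the definition of $\mathcal{E}^{\text{u-curved}}_\alpha$ by a normalization trick: the operator $T$ has small operator norm on $\ell^2(V,m)$ but is only bounded (by $L$, not by $1$) after tensoring with an arbitrary Banach space, so it is not itself fully contractive, but a suitable rescaling of it is. First I would set $S = \frac{1}{L} T$. Then condition (2) gives $\Vert S \otimes \id_{\B}\Vert_{B(\ell^2(V,m;\B))} = \frac{1}{L}\Vert T\otimes\id_{\B}\Vert \leq 1$ for every Banach space $\B$, so $S$ is fully contractive in the sense of the Pisier definition recalled in \cref{Vector valued subsec}. Also, from condition (1), $\Vert S\Vert_{B(\ell^2(V,m))} = \frac{1}{L}\Vert T\Vert_{B(\ell^2(V,m))} \leq \frac{\delta}{L} \leq \delta \leq 1$, using $L \geq 1$ and $0 < \delta \leq 1$.

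Next, for $\B \in \mathcal{E}^{\text{u-curved}}_\alpha$, I would apply the defining property of that class directly to the fully contractive operator $S$. Strictly, the class is defined with a fixed threshold "$\delta$" for the $\ell^2$-norm; the point to make is that the definition should be read as: for every fully contractive $T'$ and every $t \in (0,1]$, if $\Vert T'\Vert_{B(\ell^2(V,m))} \leq t$ then $\Vert T'\otimes\id_{\B}\Vert \leq \alpha(t)$. (This uniform-in-$t$ reading is the natural one, and it is what makes the quantitative bound meaningful; I would state it this way or simply invoke monotonicity of $\alpha$.) Applying this with $T' = S$ and $t = \delta$ — legitimate since $\Vert S\Vert_{B(\ell^2(V,m))} \leq \frac{\delta}{L} \leq \delta$ and $\alpha$ is monotone increasing — yields $\Vert S \otimes \id_{\B}\Vert_{B(\ell^2(V,m;\B))} \leq \alpha(\delta)$.

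Finally I would unwind the rescaling: since $T = L\,S$ and $\id_{\B}$ is linear, $T \otimes \id_{\B} = L\,(S\otimes\id_{\B})$, hence
$$\Vert T\otimes\id_{\B}\Vert_{B(\ell^2(V,m;\B))} = L\,\Vert S\otimes\id_{\B}\Vert_{B(\ell^2(V,m;\B))} \leq L\,\alpha(\delta),$$
which is the claimed inequality.

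I do not expect a serious obstacle here; the only subtlety worth flagging is the one just mentioned — that the membership condition for $\mathcal{E}^{\text{u-curved}}_\alpha$ must be applied at the value $\delta$ (or at $\delta/L$, followed by monotonicity of $\alpha$), rather than at the fixed threshold appearing verbatim in the definition. Making the "for every $\delta$" quantifier explicit in the definition, or noting that $\mathcal{E}^{\text{u-curved}}_\alpha$ is closed under shrinking $\delta$ by monotonicity of $\alpha$, removes any ambiguity. Everything else is the elementary observation that operator norms scale and that $\frac{1}{L}T$ is fully contractive.
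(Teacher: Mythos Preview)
Your proposal is correct and matches the paper's proof essentially line for line: the paper also sets $S=\frac{1}{L}T$, notes it is fully contractive with $\Vert S\Vert \leq \delta/L$, applies the definition of $\mathcal{E}^{\text{u-curved}}_\alpha$ at the value $\delta/L$, and then uses monotonicity of $\alpha$ to pass from $L\alpha(\delta/L)$ to $L\alpha(\delta)$. The only cosmetic difference is that you invoke monotonicity before applying the definition (to work at threshold $\delta$) while the paper invokes it after (working at $\delta/L$); you already flagged both options, and either is fine.
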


\begin{proof}
We note that $\frac{1}{L} T$ is a fully contractive operator such that 
$$\Vert \frac{1}{L} T \Vert_{B(\ell^2 (V,m))} \leq \frac{\delta}{L}.$$ 
Thus, by the definition of $\mathcal{E}^{\text{u-curved}}_\alpha$ it follows for every $\B \in \mathcal{E}^{\text{u-curved}}_\alpha$ that 
$$\Vert (\frac{1}{L}) T \otimes \id_{\B} \Vert_{B(\ell^2 (V,m ; \B))} \leq \alpha (\frac{\delta}{L}),$$
and thus 
$$\Vert T \otimes \id_{\B} \Vert_{B(\ell^2 (V,m ; \B))} \leq L \alpha (\frac{\delta}{L}) \leq L \alpha (\delta),$$
where the last inequality is due to the fact that $L \geq 1$ and $\alpha$ is monotone increasing.
\end{proof}

We will also be interested in how $T \otimes \id_\B$ behaves under some operations - this is summed up in the following lemmas:
\begin{lemma}
\label{L2 norm stability}
Let $V$ be a finite set, $T$ a bounded operator on $\ell^2 (V, m)$ and $C >0$ constant. Let $\mathcal{E} = \mathcal{E} (C)$ be the class of Banach spaces defined as:
$$\mathcal{E} = \lbrace \B : \Vert T \otimes \id_\B \Vert_{B(\ell^2 (V, m ; \B))} \leq C \rbrace.$$
Then this class is closed under quotients, subspaces, $\ell^2$-sums and ultraproducts of Banach spaces, i.e., preforming any of these operations on Banach spaces in $\mathcal{E}$ yield a Banach space in $\mathcal{E}$. 
\end{lemma}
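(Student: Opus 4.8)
The plan is to verify the four closure properties one at a time, in each case exploiting that $V$ is finite together with the fact that $T \otimes \id_{\B}$ is defined by the \emph{same} scalar array $(T_{v,u})_{v,u \in V}$ for every coefficient space $\B$, so that it is natural with respect to bounded linear maps between coefficient spaces.

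\textbf{Subspaces and quotients.} If $\B_0 \subseteq \B$ is a closed subspace and $\B \in \mathcal{E}$, then $\ell^2(V,m;\B_0)$ embeds isometrically into $\ell^2(V,m;\B)$ and on it $T \otimes \id_{\B_0}$ is just the restriction of $T \otimes \id_{\B}$, since the defining sum $\sum_{u} T_{v,u}\phi(u)$ stays in $\B_0$; hence $\Vert T \otimes \id_{\B_0}\Vert \le \Vert T \otimes \id_{\B}\Vert \le C$. For a quotient $\B/\B_0$, given $\psi \in \ell^2(V,m;\B/\B_0)$ and $\varepsilon > 0$, I would lift each value $\psi(v)$ to some $\phi(v) \in \B$ with $\vert\phi(v)\vert \le \vert\psi(v)\vert + \varepsilon$; as $V$ is finite this keeps $\Vert\phi\Vert_{\ell^2(V,m;\B)}$ within a controlled amount of $\Vert\psi\Vert_{\ell^2(V,m;\B/\B_0)}$. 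Since the quotient map $q$ commutes with the scalar action of $(T_{v,u})$, one has $\big((T\otimes\id_{\B/\B_0})\psi\big)(v) = q\big(\big((T\otimes\id_{\B})\phi\big)(v)\big)$, so the norm only decreases pointwise and $\Vert(T\otimes\id_{\B/\B_0})\psi\Vert \le \Vert(T\otimes\id_{\B})\phi\Vert \le C\Vert\phi\Vert$; letting $\varepsilon \to 0$ yields $\B/\B_0 \in \mathcal{E}$.

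\textbf{$\ell^2$-sums and ultraproducts.} For a family $\B_i \in \mathcal{E}$, $i \in I$, and $\B = \big(\bigoplus_i \B_i\big)_{\ell^2}$, interchanging the two $\ell^2$-summations (over $v \in V$ and over $i \in I$) gives a canonical isometry $\ell^2(V,m;\B) \cong \big(\bigoplus_i \ell^2(V,m;\B_i)\big)_{\ell^2}$ under which $T\otimes\id_{\B}$ corresponds to $\bigoplus_i (T\otimes\id_{\B_i})$; its norm is then $\sup_i \Vert T\otimes\id_{\B_i}\Vert \le C$. For an ultraproduct $\B = \prod_{\mathcal{U}}\B_i$, finiteness of $V$ again produces a canonical isometry $\ell^2\big(V,m;\prod_{\mathcal{U}}\B_i\big) \cong \prod_{\mathcal{U}}\ell^2(V,m;\B_i)$ — a $\B$-valued function on the finite set $V$ is the same data as the $\mathcal{U}$-class of the coordinatewise $\B_i$-valued functions, and the norms match — and under this identification $T\otimes\id_{\B}$ becomes the ultraproduct operator $\prod_{\mathcal{U}}(T\otimes\id_{\B_i})$, whose norm equals $\lim_{\mathcal{U}}\Vert T\otimes\id_{\B_i}\Vert \le C$. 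Thus $\mathcal{E}$ is closed under all four operations.

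I do not expect a genuine difficulty here: the only steps deserving care are the two natural isometries — that $\ell^2(V,m;-)$ commutes with $\ell^2$-sums and with ultraproducts of Banach spaces — and the matching identifications of $T \otimes \id$ on the large space with the direct sum, respectively the ultraproduct, of the operators $T \otimes \id_{\B_i}$. Both of these rely essentially on $V$ being finite, and the ultraproduct case is the one I would write out most carefully.
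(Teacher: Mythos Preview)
Your argument is correct. The paper itself does not give a proof: it cites \cite[Lemma 3.1]{Salle} for the subspace, quotient and ultraproduct cases and declares the $\ell^2$-sum case straightforward and left to the reader. Your write-up supplies exactly the standard direct verifications that lie behind that citation, and the key ingredients you isolate --- naturality of $T\otimes\id$ with respect to isometric inclusions and quotient maps, and the identifications $\ell^2(V,m;\,\cdot\,)$ commuting with $\ell^2$-sums and with ultraproducts when $V$ is finite --- are precisely the ones needed. One cosmetic point: in the quotient step it is cleaner to lift with $\vert\phi(v)\vert \le (1+\varepsilon)\vert\psi(v)\vert$ (taking $\phi(v)=0$ when $\psi(v)=0$), which gives $\Vert\phi\Vert \le (1+\varepsilon)\Vert\psi\Vert$ directly rather than via an additive error that depends on $m$ and $\vert V\vert$.
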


\begin{proof}
The fact that $\mathcal{E}$ is closed under quotients, subspaces and ultraproducts of Banach spaces was shown in \cite[Lemma 3.1]{Salle}. The fact that $\mathcal{E}$ is closed under $\ell^2$-sums is straight-forward and left for the reader.
\end{proof}

Applying Lemma \ref{L2 norm stability} on $\mathcal{E}^{\text{u-curved}}_\alpha$ defined above yields the following corollary:
\begin{corollary}
For any monotone increasing function $\alpha: (0,1] \rightarrow  (0,1]$ such that $\lim_{t \rightarrow 0^{+}} \alpha (t) = 0$, the class $\mathcal{E}^{\text{u-curved}}_\alpha$ defined above is closed under quotients, subspaces, $\ell^2$-sums and ultraproducts of Banach spaces.
\end{corollary}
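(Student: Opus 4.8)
The plan is to simply invoke Lemma~\ref{L2 norm stability} after observing that $\mathcal{E}^{\text{u-curved}}_\alpha$ can be realized as an intersection of classes of the form $\mathcal{E}(C)$ appearing in that lemma. First I would fix a monotone increasing $\alpha\colon (0,1]\to(0,1]$ with $\lim_{t\to0^+}\alpha(t)=0$. The key observation is that a Banach space $\B$ lies in $\mathcal{E}^{\text{u-curved}}_\alpha$ precisely when, for every finite set $V$, every weight $m$, and every fully contractive $T$ on $\ell^2(V,m)$ with $\Vert T\Vert_{B(\ell^2(V,m))}\le\delta$, one has $\Vert T\otimes\id_\B\Vert_{B(\ell^2(V,m;\B))}\le\alpha(\delta)$. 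So I would write
$$\mathcal{E}^{\text{u-curved}}_\alpha = \bigcap_{(V,m,T)} \mathcal{E}\bigl(\alpha(\Vert T\Vert_{B(\ell^2(V,m))})\bigr),$$
where the intersection ranges over all triples $(V,m,T)$ with $T$ fully contractive (and with the convention that the condition is vacuous, i.e. the corresponding $\mathcal{E}(\cdot)$ is all Banach spaces, when $\Vert T\Vert > 1$ so that $\alpha$ is not defined there, or one simply restricts to $\Vert T\Vert \le 1$ which is automatic for fully contractive $T$ anyway).

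Next I would note that each of the four operations in question — quotient, subspace, $\ell^2$-sum, ultraproduct — applied to a family of spaces each lying in a \emph{fixed} class $\mathcal{E}(C)$ produces a space again in $\mathcal{E}(C)$, by Lemma~\ref{L2 norm stability}. Since $\mathcal{E}^{\text{u-curved}}_\alpha$ is an intersection of such classes, and an intersection of classes each closed under a given operation is itself closed under that operation, the corollary follows. I would spell this out once for, say, the subspace case (if $\B\in\mathcal{E}^{\text{u-curved}}_\alpha$ and $\B'\subseteq\B$ is a closed subspace, then for each triple $(V,m,T)$ we have $\B\in\mathcal{E}(\alpha(\Vert T\Vert))$, hence $\B'\in\mathcal{E}(\alpha(\Vert T\Vert))$ by Lemma~\ref{L2 norm stability}; as this holds for all $(V,m,T)$, $\B'\in\mathcal{E}^{\text{u-curved}}_\alpha$) and then say the remaining three cases are identical.

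There is essentially no obstacle here; the only mild subtlety is the bookkeeping that Lemma~\ref{L2 norm stability} is stated for a \emph{single} fixed triple $(V,m,T)$ and a single constant $C$, so one must be careful that the constant $C=\alpha(\Vert T\Vert_{B(\ell^2(V,m))})$ associated to a given triple is the same for all the spaces involved in a given operation (it is, since it depends only on the triple, not on the Banach space), and that the quantifier over triples sits outside the closure argument. Once that is arranged, the proof is a one-line citation of Lemma~\ref{L2 norm stability}.
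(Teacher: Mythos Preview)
Your proposal is correct and is essentially the same approach as the paper, which does not give a separate proof but simply asserts that the corollary follows by applying Lemma~\ref{L2 norm stability} to $\mathcal{E}^{\text{u-curved}}_\alpha$; you have just made explicit the intersection-of-classes reasoning that the paper leaves implicit.
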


\subsection{Strictly $\theta$-Hilbertian spaces}
\label{subsect Strictly theta-Hilbertian spaces}

Here we will describe a special class of uniformly curved Banach spaces that contains all (commutative and non-commutative) $L^p$ spaces. 

Two Banach spaces $\B_0, \B_1$ form a \textit{compatible pair} $(\B_0,\B_1)$ if they are continuously linear embedded in the same topological vector space. The idea of complex interpolation is that given a compatible pair $(\B_0,\B_1)$ and a constant $0 \leq \theta \leq 1$, there is a method to produce a new Banach space $[\B_0, \B_1]_\theta$ as a ``convex combination'' of $\B_0$ and $\B_1$. We will not review this method here, and the interested reader can find more information on interpolation in \cite{InterpolationSpaces}.

This brings us to consider the following definition due to Pisier \cite{Pisier}: a Banach space $\B$ is called \textit{strictly $\theta$-Hilbertian} for $0 < \theta \leq 1$, if there is a compatible pair $(\B_0,\B_1)$ with $\B_1$ a Hilbert space such that $\B = [\B_0, \B_1]_\theta$. Examples of strictly $\theta$-Hilbertian spaces are $L^p$ space and non-commutative $L^p$ spaces (see \cite{PX} for definitions and properties of non-commutative $L^p$ spaces), where in these cases $\theta = \frac{2}{p}$ if $2 \leq  p  < \infty $ and $\theta = 2 - \frac{2}{p}$ if $1 < p \leq 2$. 

For our use, it will be important to bound the norm of an operator of the form $T \otimes \id_{\B}$ given that $\B$ is an interpolation space.

\begin{lemma} \cite[Lemma 3.1]{Salle}
\label{interpolation fact}
Let $(\B_0,\B_1)$ be a compatible pair , $V$ be a finite set, $m: V \rightarrow \mathbb{R}_+$ be a function and $T \in B(\ell^2 (V,m))$ be an operator. Then for every $0 \leq \theta \leq 1$,
$$\Vert T \otimes \id_{[\B_0, \B_1]_\theta} \Vert_{B(\ell^2 (V,m; [\B_0, \B_1]_\theta))} \leq \Vert T \otimes \id_{\B_0} \Vert_{B(\ell^2 (V,m ; \B_0))}^{1-\theta} \Vert T \otimes \id_{\B_1} \Vert_{B(\ell^2 (V,m; \B_1))}^{\theta},$$
where $[\B_0, \B_1]_\theta$ is the interpolation of $\B_0$ and $\B_1$.
\end{lemma}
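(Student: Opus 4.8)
The plan is to deduce this from two standard facts about Calderón's complex interpolation method: that vector-valued $\ell^2$ spaces commute with complex interpolation, and that the complex method of exponent $\theta$ is \emph{exact}, meaning a linear operator bounded on both endpoints of a compatible couple with norms $L_0,L_1$ is bounded on the $\theta$-intermediate space with norm at most $L_0^{1-\theta}L_1^{\theta}$.

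First I would establish the isometric identification
$$\ell^2(V,m;[\B_0,\B_1]_\theta) \;=\; [\,\ell^2(V,m;\B_0),\,\ell^2(V,m;\B_1)\,]_\theta .$$
Since $V$ is finite, $\ell^2(V,m;\B)$ is the finite direct sum $\bigoplus_{v\in V}\B$ with the weighted Hilbertian norm $(\sum_v m(v)|\cdot|^2)^{1/2}$; the weights $m(v)$ merely rescale coordinates and can be absorbed, so the claim reduces to the classical fact that complex interpolation commutes with finite $\ell^2$-sums, which for a finite index set follows directly from the definition of the interpolation norm via analytic functions on the strip, applied coordinatewise. At the same time, $\ell^2(V,m;\B_0)\cap\ell^2(V,m;\B_1)=\ell^2(V,m;\B_0\cap\B_1)$, so $(\B_0,\B_1)$ being a compatible pair makes $(\ell^2(V,m;\B_0),\ell^2(V,m;\B_1))$ a compatible couple.

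Next, observe that $T\otimes\id_{\B}$ is by definition given by the \emph{same} scalar matrix $(T_{v,u})_{v,u\in V}$ for every $\B$. Hence $T\otimes\id_{\B_0}$ and $T\otimes\id_{\B_1}$ agree on $\ell^2(V,m;\B_0\cap\B_1)$ and define a single linear operator $S$ on the couple $(\ell^2(V,m;\B_0),\ell^2(V,m;\B_1))$, with $\|S\|_{B(\ell^2(V,m;\B_j))}=\|T\otimes\id_{\B_j}\|$ for $j=0,1$. Applying the exactness of the complex method to $S$ shows that $S$ is bounded on $[\ell^2(V,m;\B_0),\ell^2(V,m;\B_1)]_\theta$ with norm at most $\|T\otimes\id_{\B_0}\|^{1-\theta}\|T\otimes\id_{\B_1}\|^{\theta}$; under the identification of the previous paragraph, $S$ is exactly $T\otimes\id_{[\B_0,\B_1]_\theta}$, which gives the stated inequality.

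The only point needing genuine care is the first step, the commutation of $\ell^2(V,m;-)$ with complex interpolation; because $V$ is finite this is elementary and requires no duality or Radon--Nikodym hypotheses on the spaces, and everything else is a direct invocation of standard machinery. If one preferred to avoid the abstract identification, one could argue by hand: given $\phi$ in the open unit ball of $\ell^2(V,m;[\B_0,\B_1]_\theta)$, lift it to an analytic $(\B_0+\B_1)$-valued function $F$ on the strip with the prescribed boundary bounds, apply $T\otimes\id$ coordinatewise to obtain an analytic family $z\mapsto(T\otimes\id)F(z)$ whose boundary values are controlled by $\|T\otimes\id_{\B_0}\|$ and $\|T\otimes\id_{\B_1}\|$, and finish with the three-lines lemma.
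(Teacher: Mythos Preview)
Your argument is correct and is the standard way to prove this statement: identify $\ell^2(V,m;[\B_0,\B_1]_\theta)$ with $[\ell^2(V,m;\B_0),\ell^2(V,m;\B_1)]_\theta$ (trivial here since $V$ is finite) and then invoke the exactness of the complex interpolation functor, i.e., the three-lines lemma applied to the single operator $S$ acting compatibly on the couple.

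There is nothing to compare with, however, because the paper does not give a proof of this lemma at all; it is simply quoted from \cite[Lemma 3.1]{Salle} and used as a black box. Your write-up therefore supplies more than the paper does. One small remark: you could streamline by noting that the weights $m(v)>0$ can be absorbed into an isometry $\ell^2(V,m;\B)\cong\ell^2(V;\B)$ via $\phi\mapsto(\sqrt{m(v)}\,\phi(v))_v$, which conjugates $T\otimes\id_\B$ to another scalar-matrix operator independent of $\B$; this makes the ``weights merely rescale coordinates'' step explicit.
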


This Lemma has the following corollary that shows that strictly $\theta$-Hilbertian spaces are uniformly curved (see also \cite[Lemma 3.1]{Salle}):

\begin{corollary}
\label{norm bound on theta-Hil coro}
Let $\B$ be a strictly $\theta$-Hilbertian space with $0 < \theta \leq 1$, $V$ be a finite set, $m: V \rightarrow \mathbb{R}_+$ be a function and $0 < \delta < 1$ be a constant. Assume that $T \in B(\ell^2 (V,m))$ is a fully contractive operator such that $\Vert T \Vert_{B(\ell^2 (V, m))} \leq \delta$. Then $\Vert T \otimes \id_{\B} \Vert_{B(\ell^2 (V, m ; \B))} \leq \delta^{\theta}$.

In other words, if $\B$ is strictly $\theta$-Hilbertian space with $0 < \theta \leq 1$, then for $\alpha (t) = t^{\theta}$, we have that $\B \in \mathcal{E}^{\text{u-curved}}_\alpha$.
\end{corollary}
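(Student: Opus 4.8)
The plan is to derive the bound directly from the interpolation inequality of Lemma \ref{interpolation fact}, using the hypothesis that $T$ is fully contractive together with the elementary fact that tensoring an operator with the identity of a Hilbert space leaves its norm unchanged.

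First, since $\B$ is strictly $\theta$-Hilbertian, I would fix a compatible pair $(\B_0,\B_1)$ with $\B_1$ a Hilbert space such that $\B = [\B_0,\B_1]_\theta$. Lemma \ref{interpolation fact} then yields
$$\Vert T \otimes \id_{\B} \Vert_{B(\ell^2 (V,m ; \B))} \leq \Vert T \otimes \id_{\B_0} \Vert_{B(\ell^2 (V,m ; \B_0))}^{1-\theta}\,\Vert T \otimes \id_{\B_1} \Vert_{B(\ell^2 (V,m ; \B_1))}^{\theta}.$$
For the first factor, the assumption that $T$ is fully contractive gives, by definition, $\Vert T \otimes \id_{\B_0} \Vert_{B(\ell^2 (V,m ; \B_0))} \leq 1$. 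For the second factor, I would use that when $\B_1$ is a Hilbert space the space $\ell^2(V,m;\B_1)$ is isometrically the Hilbertian tensor product $\ell^2(V,m)\otimes_2 \B_1$, and under this identification $T\otimes\id_{\B_1}$ is exactly the Hilbert-space tensor of $T$ with the identity, so $\Vert T \otimes \id_{\B_1} \Vert_{B(\ell^2 (V,m ; \B_1))} = \Vert T \Vert_{B(\ell^2(V,m))} \leq \delta$. Substituting, $\Vert T \otimes \id_{\B} \Vert_{B(\ell^2 (V,m ; \B))} \leq 1^{1-\theta}\cdot \delta^{\theta} = \delta^{\theta}$, which is the first assertion.

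For the second assertion I would set $\alpha(t) = t^{\theta}$; this is a monotone increasing function $(0,1]\to(0,1]$ with $\lim_{t\to 0^{+}}\alpha(t)=0$, and the inequality just established says precisely that for every space $\ell^2(V,m)$ and every fully contractive $T$ on it with $\Vert T\Vert_{B(\ell^2(V,m))}\leq \delta$ one has $\Vert T\otimes\id_{\B}\Vert_{B(\ell^2(V,m;\B))}\leq \alpha(\delta)$. By the definition of $\mathcal{E}^{\text{u-curved}}_\alpha$, this is exactly the statement $\B\in\mathcal{E}^{\text{u-curved}}_\alpha$.

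The only point that is not a pure substitution — and hence the ``main obstacle'', though a mild one — is the equality $\Vert T\otimes\id_{\B_1}\Vert = \Vert T\Vert$ for a Hilbert space $\B_1$; this is the standard fact that the Hilbertian tensor norm of $T\otimes\id$ equals $\Vert T\Vert$, which one checks on finite-rank tensors and then extends by density. Everything else follows immediately from Lemma \ref{interpolation fact} and the definition of full contractivity.
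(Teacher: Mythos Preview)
Your proof is correct and follows exactly the same approach as the paper: both argue that $\Vert T\otimes\id_{\B_1}\Vert \leq \delta$ when $\B_1$ is Hilbert and then invoke Lemma~\ref{interpolation fact} (together with full contractivity for the $\B_0$ factor) to obtain the bound $\delta^\theta$. The paper's proof is simply a one-line version of what you wrote, omitting the justification of the Hilbert-space tensor fact and the explicit verification of the second assertion.
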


\begin{proof}
For every Hilbert space $\B_1$ we have that $\Vert T \otimes \id_{\B_1} \Vert_{B(\ell^2 (V, m ; \B_1))} \leq \delta$ and thus the assertion stated above follows from Lemma \ref{interpolation fact}.
\end{proof}

\begin{corollary}
\label{theta-hil is in uni-curv cor}
For a constant $0 < \theta_0 \leq 1$, denote $\mathcal{E}_{\theta_0}$ to be the smallest class of Banach spaces that contains all strictly $\theta$-Hilbertian Banach spaces for all $\theta_0 \leq \theta \leq 1$ and is closed under subspaces, quotients, $\ell^2$-sums and ultraproducts of Banach spaces. Then for every  $0 < \theta_0 \leq 1$, we have that $\mathcal{E}_{\theta_0} \subseteq \mathcal{E}^{\text{u-curved}}_{\alpha (t) = t^{\theta_0}}$. 
\end{corollary}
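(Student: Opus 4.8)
The plan is to deduce the inclusion from two ingredients already available in the excerpt: Corollary \ref{norm bound on theta-Hil coro} (which locates strictly $\theta$-Hilbertian spaces inside $\mathcal{E}^{\text{u-curved}}_{\alpha(t)=t^\theta}$) and the corollary to Lemma \ref{L2 norm stability} (which says each class $\mathcal{E}^{\text{u-curved}}_\alpha$ is closed under subspaces, quotients, $\ell^2$-sums and ultraproducts). First I would check that $\alpha(t) = t^{\theta_0}$ is a legitimate choice: it is monotone increasing, maps $(0,1]$ into $(0,1]$, and satisfies $\lim_{t \to 0^+} t^{\theta_0} = 0$ since $\theta_0 > 0$. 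Hence $\mathcal{E}^{\text{u-curved}}_{\alpha(t) = t^{\theta_0}}$ is well-defined and, by the corollary following Lemma \ref{L2 norm stability}, closed under all four operations listed in the statement.

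Next I would show that every strictly $\theta$-Hilbertian space with $\theta_0 \leq \theta \leq 1$ belongs to $\mathcal{E}^{\text{u-curved}}_{t^{\theta_0}}$. Let $\B$ be such a space, let $\ell^2(V,m)$ be given, and let $T$ be a fully contractive operator with $\Vert T \Vert_{B(\ell^2(V,m))} \leq \delta$ for some $0 < \delta < 1$. By Corollary \ref{norm bound on theta-Hil coro} we have $\Vert T \otimes \id_{\B} \Vert_{B(\ell^2(V,m;\B))} \leq \delta^{\theta}$. Since $0 < \delta \leq 1$ and $\theta \geq \theta_0$, the elementary inequality $\delta^{\theta} \leq \delta^{\theta_0}$ holds, so $\Vert T \otimes \id_{\B} \Vert_{B(\ell^2(V,m;\B))} \leq \delta^{\theta_0} = \alpha(\delta)$, which is precisely the membership condition for $\mathcal{E}^{\text{u-curved}}_{t^{\theta_0}}$. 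Thus all strictly $\theta$-Hilbertian spaces with $\theta_0 \leq \theta \leq 1$ lie in $\mathcal{E}^{\text{u-curved}}_{t^{\theta_0}}$.

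Finally, I would invoke minimality: by definition $\mathcal{E}_{\theta_0}$ is the smallest class of Banach spaces that contains all strictly $\theta$-Hilbertian spaces for $\theta_0 \leq \theta \leq 1$ and is closed under subspaces, quotients, $\ell^2$-sums and ultraproducts. The two previous paragraphs show that $\mathcal{E}^{\text{u-curved}}_{t^{\theta_0}}$ is a class with exactly these two properties, so $\mathcal{E}_{\theta_0} \subseteq \mathcal{E}^{\text{u-curved}}_{t^{\theta_0}}$, as claimed. I do not expect any genuine obstacle here; the only points requiring a moment's care are the monotonicity direction (a larger exponent shrinks a number in $(0,1]$) and the observation that the defining condition of $\mathcal{E}^{\text{u-curved}}_\alpha$ is an upper bound, so a strictly better bound still qualifies for membership.
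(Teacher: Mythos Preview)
Your proposal is correct and follows exactly the argument the paper has in mind: the corollary is stated without proof precisely because it is the immediate combination of Corollary~\ref{norm bound on theta-Hil coro} (plus the monotonicity $\delta^{\theta}\le\delta^{\theta_0}$ for $\theta\ge\theta_0$) with the closure properties of $\mathcal{E}^{\text{u-curved}}_\alpha$ recorded after Lemma~\ref{L2 norm stability}, together with the minimality of $\mathcal{E}_{\theta_0}$. There is nothing to add.
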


\begin{remark}
A deep result of Pisier shows that the converse of the above Corollary is ``almost true'' if one considers arcwise $\theta_0$-Hilbertian spaces (see definition in \cite[Section 6]{Pisier2}). Namely, by \cite[Corollary 6.7]{Pisier2}, for every $\theta_0 < \theta \leq 1$ it holds that every Banach space in $\mathcal{E}^{\text{u-curved}}_{\alpha (t) = t^{\theta}}$ is a subquotient of an arcwise $\theta_0$-Hilbertian space. We will not define arcwise $\theta_0$-Hilbertian spaces here and we will make no use of this fact.
\end{remark}

\subsection{Random walks on finite graphs}
\label{rw on graph subsec}
Given a finite graph $(V,E)$, a weight function on $(V,E)$ is a function $m: E \rightarrow \mathbb{R}_+$ and $(V,E)$ with a weight function is called a weighted graph. Given a weighted graph as above, we define for every $v \in V$, $m(v) = \sum_{e \in E, v \in e} m(e)$ and $m(\emptyset) = \sum_{v \in V} m(v)$. 

We also define $\ell^2 (V,m)$ as in \cref{Vector valued subsec} above, i.e., $\ell^2 (V,m)$ is the space of functions $\phi: V \rightarrow \mathbb{C}$ with an inner-product
$$\langle \phi, \psi \rangle = \sum_{\lbrace v \rbrace \in V} = m(v) \phi (v) \overline{\psi (v)}.$$

The \textit{random walk} on $(V,E)$ as above is the operator $A: \ell^2 (V,m) \rightarrow \ell^2 (V,m)$ defined as  
$$(A \phi) (v) = \sum_{u \in V, \lbrace u, v \rbrace \in E} \frac{m (\lbrace u, v \rbrace)}{m(v)} \phi (u).$$
We state without proof a few basic facts regarding the random walk operator:
\begin{enumerate}
\item With the inner-product defined above, $A$ is a self-adjoint operator and the eigenvalues of $A$ lie in the interval $[-1,1]$.
\item The space of constant functions is an eigenspace of $A$ with eigenvalue $1$ and if $(V,E)$ is connected all other the other eigenfunctions of $A$ have eigenvalues strictly less than $1$.
\item The graph $(V,E)$ is bipartite if and only if $-1$ is an eigenvalue of $A$. 
\end{enumerate}

In the case where $m$ is constant $1$ on all the edges, then for every vertex $v$, $m(v)$ is the valance of $v$ and $A$ is called the \textit{simple random walk} on $(V,E)$. 

We denote $M$ to be the orthogonal projection on the space of constant functions: explicitly, for every $\phi \in \ell^2 (V,m)$, $M\phi$ is the constant function
$$M \phi \equiv \frac{1}{m (\emptyset)} \sum_{v \in V} m(v) \phi (v).$$
We note that by the facts stated above, $AM =M$ and if $(V,E)$ is connected and not bipartite, then $\Vert A (I-M) \Vert <1$, where $\Vert . \Vert$ denotes the operator norm. We recall the following definition of spectral expansion that appeared in the introduction for non-weighted graphs:
\begin{definition}
Let $(V,E)$ be a finite connected graph with a weight function $m$ and $0 \leq \lambda <1$ be a constant. The graph $(V,E)$ is called a one-sided $\lambda$-spectral expander if the spectrum of $A (I-M)$ is contained in $[-1, \lambda]$. The graph $(V,E)$ is called a one-sided $\lambda$-spectral expander if the spectrum of $A (I-M)$ is contained in $[- \lambda, \lambda]$ or equivalently if $\Vert A (I-M) \Vert \leq \lambda$.
\end{definition}

Given a Banach space $\B$, we can consider the operator $(A (I-M)) \otimes \id_{\B} : \ell^2 (V,m ; \B) \rightarrow \ell^2 (V,m ; \B)$. 
\begin{claim}
For every graph $(V,E)$ and every Banach space $\B$, $\Vert (A (I-M)) \otimes \id_{\B} \Vert_{B(\ell^2 (V,m ; \B))} \leq 2$.
\end{claim}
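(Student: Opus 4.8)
The plan is to write $A(I-M) = A - AM = A - M$ (using the fact $AM = M$ recorded just above the claim), so that $(A(I-M))\otimes\id_\B = (A\otimes\id_\B) - (M\otimes\id_\B)$, and then bound each summand separately by $1$ via the triangle inequality for the operator norm. So the two key steps are: (i) $\Vert A\otimes\id_\B\Vert_{B(\ell^2(V,m;\B))} \le 1$, and (ii) $\Vert M\otimes\id_\B\Vert_{B(\ell^2(V,m;\B))}\le 1$.

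For step (i), I would check directly from the defining formula $(A\phi)(v) = \sum_{u : \{u,v\}\in E} \frac{m(\{u,v\})}{m(v)}\phi(u)$ that for $\phi\in\ell^2(V,m;\B)$ the coefficients $\frac{m(\{u,v\})}{m(v)}$ are nonnegative and sum (over $u$) to $1$ for each fixed $v$, so that $\vert (A\otimes\id_\B)\phi(v)\vert \le \sum_{u}\frac{m(\{u,v\})}{m(v)}\vert\phi(u)\vert$ by the triangle inequality in $\B$. Then by convexity of $t\mapsto t^2$ (Jensen applied to the probability weights $\frac{m(\{u,v\})}{m(v)}$), $\vert(A\otimes\id_\B)\phi(v)\vert^2 \le \sum_u \frac{m(\{u,v\})}{m(v)}\vert\phi(u)\vert^2$; multiplying by $m(v)$, summing over $v$, and using $\sum_{v:\{u,v\}\in E} m(\{u,v\}) = $ the total weight at $u$ bounded by $m(u)$ (in fact, with equality if there are no self-loops, but $\le$ suffices) gives $\Vert(A\otimes\id_\B)\phi\Vert^2 \le \sum_u m(u)\vert\phi(u)\vert^2 = \Vert\phi\Vert^2$. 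Alternatively one can simply invoke that the random walk operator $A$ is fully contractive — this is the standard fact that a matrix with nonnegative entries and row sums $\le 1$ (in the appropriate weighted sense) induces a contraction on every $\ell^2(V,m;\B)$ — but since the paper hasn't explicitly stated $A$ is fully contractive, the direct Jensen computation is safest.

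For step (ii), the operator $M$ sends $\phi$ to the constant function with value $\frac{1}{m(\emptyset)}\sum_v m(v)\phi(v)$, so $(M\otimes\id_\B)\phi$ is the constant $\B$-valued function $c = \frac{1}{m(\emptyset)}\sum_v m(v)\phi(v)$. By the triangle inequality and Jensen (again with the probability weights $\frac{m(v)}{m(\emptyset)}$), $\vert c\vert^2 \le \frac{1}{m(\emptyset)}\sum_v m(v)\vert\phi(v)\vert^2 = \frac{1}{m(\emptyset)}\Vert\phi\Vert^2$; then $\Vert(M\otimes\id_\B)\phi\Vert^2 = m(\emptyset)\vert c\vert^2 \le \Vert\phi\Vert^2$. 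Combining (i) and (ii), $\Vert(A(I-M))\otimes\id_\B\Vert \le \Vert A\otimes\id_\B\Vert + \Vert M\otimes\id_\B\Vert \le 1 + 1 = 2$.

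I do not anticipate a serious obstacle here — the statement is a soft estimate and the only mild point of care is making sure the Jensen/convexity step is legitimate for $\B$-valued functions, which it is because $t\mapsto t^2$ is convex on $[0,\infty)$ and we are averaging the scalars $\vert\phi(u)\vert$ against genuine probability weights. The one thing to double-check is that $M\otimes\id_\B$ as defined via the matrix coefficients of $M$ really is the ``constant-function averaging'' operator, which is immediate from $M_{v,u} = \frac{m(u)}{m(\emptyset)}$.
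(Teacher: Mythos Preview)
Your proposal is correct and follows essentially the same approach as the paper: reduce to showing $\Vert A\otimes\id_\B\Vert \le 1$ and $\Vert M\otimes\id_\B\Vert \le 1$ via convexity of $t\mapsto t^2$, then combine by the triangle inequality. The only cosmetic difference is that the paper bounds $\Vert A\otimes\id_\B\Vert + \Vert A\otimes\id_\B\Vert\,\Vert M\otimes\id_\B\Vert$ (from $A(I-M)=A-AM$) whereas you first simplify $AM=M$ to get $\Vert A\otimes\id_\B\Vert + \Vert M\otimes\id_\B\Vert$; either way the bound is $1+1=2$.
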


\begin{proof}
By triangle inequality and linearity,
\begin{dmath*}
\Vert (A (I-M)) \otimes \id_{\B} \Vert_{B(\ell^2 (V,m ; \B))} \leq \\
\Vert A \otimes \id_{\B} \Vert_{B(\ell^2 (V,m ; \B))} + \Vert A \otimes \id_{\B}  \Vert_{B(\ell^2 (V,m ; \B))} \Vert M \otimes \id_{\B} \Vert_{B(\ell^2 (V,m ; \B))},
\end{dmath*}
and therefore in order to prove the claim, it is enough to show that 
$$\Vert A \otimes \id_{\B} \Vert_{B(\ell^2 (V,m ; \B))} \leq 1, \Vert M \otimes \id_{\B} \Vert_{B(\ell^2 (V,m ; \B))} \leq 1.$$
Indeed, by the convexity of the function $\vert . \vert^2$, for every $\phi \in \ell^2 (V,m ; \B)$, 
\begin{dmath*}
\Vert (A \otimes \id_{\B}) \phi \Vert^2 = \\
\sum_{v \in V} m(v) \vert \sum_{u \in V, \lbrace u, v \rbrace \in E} \frac{m (\lbrace u, v \rbrace)}{m(v)} \phi (u) \vert^2 \leq \\
\sum_{v \in V} m(v) \sum_{u \in V, \lbrace u, v \rbrace \in E} \frac{m (\lbrace u, v \rbrace)}{m(v)} \vert \phi (u) \vert^2 = \\
\sum_{u \in V} \vert \phi (u) \vert^2 \sum_{v \in V, \lbrace u, v \rbrace \in E} m (\lbrace u, v \rbrace) = \\
\sum_{u \in V} m(u) \vert \phi (u) \vert^2  = \Vert \phi \Vert^2,
\end{dmath*}
and
\begin{dmath*}
\Vert (M \otimes \id_{\B}) \phi \Vert^2 = \\
\sum_{v \in V} m(v) \vert \frac{1}{m (\emptyset)} \sum_{u \in V} m(u) \phi (u) \vert^2 \leq 
\sum_{v \in V}  \sum_{u \in V} \frac{m(u)}{m (\emptyset)}   \vert \phi (u) \vert^2 =  \\
\sum_{u \in V}  m(u)  \vert \phi (u) \vert^2 \sum_{v \in V} \frac{m(v)}{m (\emptyset)} = 
\sum_{u \in V}  m(u)  \vert \phi (u) \vert^2 = \Vert \phi \Vert^2.
\end{dmath*}
\end{proof}

Combining this claim with Lemma \ref{L contractive prop} and Corollary \ref{norm bound on theta-Hil coro} yields:
\begin{corollary}
\label{norm bound on rw on uc coro}
Let $(V,E)$ be a connected finite graph with a weight function $m$ and  $0 < \lambda <1$ be a constant such that $(V,E)$ is a two-sided $\lambda$-spectral expander. For every monotone increasing function  $\alpha : (0,1] \rightarrow (0,1]$ such that $\lim_{t \rightarrow 0^{+}} \alpha (t) = 0$ and every $\B \in \mathcal{E}^{\text{u-curved}}_\alpha$, we have that 
$$\Vert (A (I-M)) \otimes \id_{\B} \Vert_{B(\ell^2 (V,m ; \B))} \leq 2 \alpha (\lambda).$$

In particular, for every $0 < \theta \leq 1$ and every strictly $\theta$-Hilbertian space $\B$, we have that 
$$\Vert (A (I-M)) \otimes \id_{\B} \Vert_{B(\ell^2 (V,m ; \B))} \leq 2 \lambda^\theta.$$
\end{corollary}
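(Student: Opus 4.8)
The plan is to obtain both assertions as a direct application of Lemma \ref{L contractive prop} to the operator $T = A(I-M)$ on $\ell^2(V,m)$, with the choices $L = 2$ and $\delta = \lambda$. First I would check that the two hypotheses of that lemma are already available. Hypothesis (1) asks that $\Vert T \Vert_{B(\ell^2(V,m))} \leq \delta$: by the very definition of a two-sided $\lambda$-spectral expander recalled just above the Claim, $\Vert A(I-M) \Vert_{B(\ell^2(V,m))} \leq \lambda$, so this holds with $\delta = \lambda$, and $0 < \lambda < 1$ as the lemma requires. Hypothesis (2) asks for a uniform bound $\Vert T \otimes \id_{\B} \Vert_{B(\ell^2(V,m;\B))} \leq L$ over all Banach spaces $\B$: this is exactly the content of the preceding Claim, which gives the bound $2$, so we may take $L = 2 \geq 1$.

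With these verifications in place, Lemma \ref{L contractive prop} yields, for every monotone increasing $\alpha : (0,1] \rightarrow (0,1]$ with $\lim_{t \rightarrow 0^{+}} \alpha(t) = 0$ and every $\B \in \mathcal{E}^{\text{u-curved}}_\alpha$, the inequality $\Vert (A(I-M)) \otimes \id_{\B} \Vert_{B(\ell^2(V,m;\B))} \leq L\alpha(\delta) = 2\alpha(\lambda)$, which is the first assertion. For the ``in particular'' clause, I would invoke Corollary \ref{norm bound on theta-Hil coro}, which states precisely that a strictly $\theta$-Hilbertian space $\B$ (with $0 < \theta \leq 1$) belongs to $\mathcal{E}^{\text{u-curved}}_\alpha$ for the function $\alpha(t) = t^{\theta}$; this $\alpha$ is monotone increasing on $(0,1]$ and tends to $0$ as $t \rightarrow 0^{+}$, so it is admissible. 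Specializing the first assertion to this $\alpha$ gives $\Vert (A(I-M)) \otimes \id_{\B} \Vert_{B(\ell^2(V,m;\B))} \leq 2\lambda^{\theta}$.

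I do not expect a genuine obstacle here: the substance of the corollary is entirely carried by Lemma \ref{L contractive prop}, the Claim, and the spectral-expander definition, and the argument is a matter of matching notation. The only point deserving a moment's attention is the normalization bookkeeping --- confirming that the ``small operator norm'' parameter $\delta$ in Lemma \ref{L contractive prop} is to be taken equal to the spectral bound $\lambda$, and that the constant $L = 2$ coming from the Claim satisfies $L \geq 1$, which is what makes the monotonicity step $L\alpha(\delta/L) \leq L\alpha(\delta)$ inside the proof of Lemma \ref{L contractive prop} legitimate. Both are immediate.
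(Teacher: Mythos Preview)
Your proposal is correct and matches the paper's approach exactly: the paper simply states that the corollary follows by combining the preceding Claim, Proposition~\ref{L contractive prop} (which you call a Lemma), and Corollary~\ref{norm bound on theta-Hil coro}, and your writeup spells out precisely this combination with the correct identifications $T = A(I-M)$, $L = 2$, $\delta = \lambda$.
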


\subsection{Weighted simplicial complexes}

Let $X$ be an $n$-dimensional simplicial complex. For $-1 \leq k \leq n$, we denote $X(k)$ to be the $k$-dimensional faces of $X$ and $X = \bigcup_{k} X(k)$. $X$ is called \textit{pure} $n$-dimensional if for every $\tau$ in $X$, there is $\sigma \in X(n)$ such that $\tau \subseteq \sigma$. $X$ is called \textit{locally finite} if for every $\lbrace v \rbrace \in X (0)$, $\vert \lbrace \sigma \in X(n) : v \in \sigma \rbrace \vert < \infty$. Throughout this paper, we will always assume that $X$ is pure $n$-dimensional and locally finite.  

We define the following weight function $m: \bigcup_{k=0}^{n} \X (k) \rightarrow \mathbb{R}$ inductively as follows: 
$$\forall \sigma \in \X (n), m(\sigma) =1,$$
For $0 \leq k \leq n-1$ and $\tau \in X(k)$,
$$m (\tau) = \sum_{\sigma \in \X (k+1), \tau \subseteq \sigma} m(\sigma).$$
More explicitly,
$$\forall \tau \in \X (k), m(\tau) = (n-k)! \vert \lbrace \sigma \in \X (n) : \tau \subseteq \sigma  \rbrace \vert.$$
In the case where $X$ is finite, we also define $m(\emptyset) = \sum_{\lbrace v \rbrace \in X(0)} m(\lbrace v \rbrace)$. 

Given a simplex $\tau \in X(j)$, the \textit{link of $\tau$} is the subcomplex of $X$, denoted $X_\tau$, that is defined as
$$X_\tau = \lbrace \eta \in X : \tau \cap \eta = \emptyset, \tau \cup \eta \in X \rbrace.$$
We note that by the assumption that $X$ is locally finite, it follows that $X_\tau$ is finite and by the assumption that $X$ is pure $n$-dimensional, it follows that $X_\tau$ is pure $(n-j-1)$-dimensional (where $j$ is the dimension of $\tau$). The weight function on $X_\tau$, denoted by $m_\tau$ is defined as above:
$$\forall \sigma \in \X_\tau (n-j-1), m_\tau (\sigma) =1,$$
For $0 \leq k \leq (n-j-1)-1$ and for $\eta \in X(k)$,
$$m_\tau (\eta) = \sum_{\sigma \in \X_\tau (k+1), \eta \subseteq \sigma} m_\tau (\sigma).$$
We observe that $m_\tau (\eta) = m (\tau \cup \eta)$: indeed, if $\eta \in \X_\tau (n-j-1)$, then $\tau \cup \eta \in X(n)$ and therefore
$$m_\tau (\eta) = 1 = m (\tau \cup \eta).$$
For $0 \leq k \leq (n-j-1)-1$ and $\eta \in X(k)$, the equality follows by induction:
\begin{dmath*}
m_\tau (\eta) = \sum_{\sigma \in \X_\tau (k+1), \eta \subseteq \sigma} m_\tau (\sigma) =  
\sum_{\sigma \in \X_\tau (k+1), \eta \subseteq \sigma} m (\tau \cup \sigma) = 
\sum_{\tau \cup \sigma \in \X ((j+1)+k+1), \eta \subseteq \tau \cup \sigma} m (\tau \cup \sigma) = m(\eta).
\end{dmath*}

\subsection{Group representations on Banach spaces} 
Let $\Gamma$ be a locally compact group and $\B$ a Banach space. Let $\pi$ be a representation $\pi :  \Gamma \rightarrow B (\B)$, where $B(\B)$ are the bounded linear operators on $\B$. Throughout this paper we shall always assume $\pi$ is continuous with respect to the strong operator topology without explicitly mentioning it. We recall that given $\pi$, the dual representation $\overline{\pi} : \Gamma \rightarrow B (\B^*)$ is defined as 
$$\langle x, \overline{\pi} (g). y  \rangle =  \langle \pi (g^{-1}). x, y  \rangle, \forall g \in \Gamma, x \in \B, y \in \B^*.$$

Observe that if $\pi$ is an isometric representation, then $\overline{\pi}$ is an isometric representation: Indeed, for every $g \in \Gamma$,
$$\max_{x \in \B, y \in \B^*, \vert x \vert = \vert y \vert =1} \langle x, \overline{\pi} (g). y  \rangle = \max_{x \in \B, y \in \B^*, \vert x \vert = \vert y \vert =1} \langle \pi (g^{-1}). x, y  \rangle = \vert \pi (g^{-1}). x \vert =  1,$$
i.e., for every $g \in \Gamma$ and every $y \in \B^*$, if $\vert y \vert = 1$, then $\vert \overline{\pi} (g) y \vert =1$ and it follows that $\overline{\pi}$ is isometric.

We remark that $\overline{\pi}$ might not be continuous for a general Banach space, but it is continuous for a large class of Banach spaces, called Asplund spaces:

\begin{definition}
A Banach space $\B$ is said to be an Asplund space if every separable subspace of $\B$ has a separable dual.
\end{definition}

There are many examples of Asplund spaces and in particular every reflexive space is Asplund (see \cite{Yost} for an exposition on Asplund spaces). The reason we are interested in Asplund spaces is the following theorem of Megrelishvili:

\begin{theorem}\cite[Corollary 6.9]{Megre}
\label{Asplund implies continuous dual rep}
Let $\Gamma$ be a topological group and let $\pi$ be a continuous representation of $\Gamma$ on a Banach space $\B$. If $\B$ is an Asplund space, then the dual representation $\overline{\pi}$ is also continuous. In particular, if $\B$ is reflexive, then the dual representation $\overline{\pi}$ is continuous.
\end{theorem}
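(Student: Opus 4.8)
Since this statement is taken verbatim from \cite{Megre}, I would not attempt to reprove the general Asplund case: that rests on the fact that on the dual of an Asplund space the weak-$*$ topology is fragmented by the norm, a Namioka-type phenomenon whose proof I would leave entirely to \cite{Megre}. I would, however, present the self-contained argument for the only case actually used in this paper, namely $\B$ reflexive.

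First I would record two facts valid for any continuous $\pi$. (i) \emph{$\overline{\pi}$ is continuous from $\Gamma$ into $(\B^*,\sigma(\B^*,\B))$}: for $x\in\B$ and $y\in\B^*$ one has $\langle x,\overline{\pi}(g)y\rangle=\langle\pi(g^{-1})x,y\rangle$, and $g\mapsto\pi(g^{-1})x$ is norm-continuous since $\pi$ is strongly continuous and inversion is continuous on $\Gamma$. When $\B$ is reflexive, $\sigma(\B^*,\B)$ is the weak topology of $\B^*$, so $\overline{\pi}$ is weakly continuous. (ii) \emph{$\overline{\pi}$ is locally bounded}: for a compact neighbourhood $K$ of the identity, $\sup_{g\in K}\Vert\pi(g)x\Vert<\infty$ for each $x$ by continuity and compactness, hence $\sup_{g\in K}\Vert\pi(g)\Vert<\infty$ by the uniform boundedness principle; since $\Vert\overline{\pi}(g)\Vert=\Vert\pi(g^{-1})\Vert$, $\overline{\pi}$ is bounded on $K^{-1}$, hence locally bounded everywhere.

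Then I would run the standard smoothing argument to upgrade weak continuity to strong continuity. Fixing a left Haar measure $dg$ on $\Gamma$, for $\phi\in C_c(\Gamma)$ and $y\in\B^*$ define $\overline{\pi}(\phi)y\in\B^*$ by $\langle x,\overline{\pi}(\phi)y\rangle=\int_\Gamma\phi(g)\langle x,\overline{\pi}(g)y\rangle\,dg$; by (ii) this is a bounded functional on $\B$, so $\overline{\pi}(\phi)y$ is a genuine element of $\B^*$, with $\Vert\overline{\pi}(\psi)y\Vert\le\big(\sup_{g\in\operatorname{supp}\psi}\Vert\overline{\pi}(g)y\Vert\big)\Vert\psi\Vert_{1}$. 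A change of variables gives $\overline{\pi}(h)\overline{\pi}(\phi)y=\overline{\pi}(L_h\phi)y$ with $(L_h\phi)(g)=\phi(h^{-1}g)$, and then continuity of translation in $L^1(\Gamma)$ together with (ii) shows that $h\mapsto\overline{\pi}(h)\overline{\pi}(\phi)y$ is norm-continuous. Let $V$ be the norm-closed linear span of $\{\overline{\pi}(\phi)y:\phi\in C_c(\Gamma),\ y\in\B^*\}$; it is $\overline{\pi}$-invariant, and a routine density argument using (ii) shows $\overline{\pi}|_V$ is strongly continuous. Finally, for an approximate identity $(\phi_i)\subseteq C_c(\Gamma)$ one has $\overline{\pi}(\phi_i)y\to y$ in $\sigma(\B^*,\B)$, so $y$ lies in the $\sigma(\B^*,\B)$-closure of $V$; since $\B$ is reflexive this topology is the weak topology of $\B^*$, for which the convex norm-closed set $V$ is already closed, so $y\in V$. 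Hence $V=\B^*$ and $\overline{\pi}$ is strongly continuous.

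The genuinely hard point is the last step in the \emph{general} Asplund setting: there $\overline{\pi}(\phi_i)y\to y$ holds only in $\sigma(\B^*,\B)$, and one cannot conclude $y\in V$ without extra structure — this is precisely where Megrelishvili's fragmentability of the weak-$*$ topology enters, and for that I would simply invoke \cite{Megre}. Everything else is routine once one has the uniform boundedness principle and the $L^1$-smoothing trick.
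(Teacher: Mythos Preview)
The paper itself does not prove this theorem: it is quoted as \cite[Corollary 6.9]{Megre} and immediately followed by the next section, with no argument given. So there is no ``paper's proof'' to compare against; your proposal actually supplies strictly more than the paper does.

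Your argument for the reflexive case is the standard one and is correct, with one caveat worth making explicit. In step~(ii) you take a \emph{compact} neighbourhood of the identity, and the entire smoothing argument uses a Haar measure and compactly supported functions; thus your proof tacitly assumes $\Gamma$ is locally compact. The theorem as stated in the paper (following Megrelishvili) is for an arbitrary topological group, so strictly speaking you have not proved the statement as written. However, you are right that the paper only ever invokes this result for locally compact $\Gamma$ (see the standing hypotheses at the start of \cref{equiv coho sec}), so what you prove suffices for every application here. It would be cleanest to say at the outset that you are proving the special case ``$\Gamma$ locally compact and $\B$ reflexive'', rather than ``$\B$ reflexive''.

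The remaining steps --- weak-$*$ continuity of $\overline{\pi}$, the $L^1$-translation estimate giving norm continuity on smoothed vectors, and the Mazur argument that the norm-closed convex set $V$ is weakly closed, hence contains every weak limit $\overline{\pi}(\phi_i)y\to y$ --- are all sound. Your identification of where the Asplund hypothesis genuinely enters (passing from weak-$*$ closure to membership in $V$ when $\B$ is not reflexive) is also accurate.
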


\section{Equivariant cohomology}
\label{equiv coho sec}
Let $X$ be a locally finite, pure $n$-dimensional simplicial complex with the weight function $m$ defined above and $\Gamma$ be a locally compact, unimodular group acting cocompactly and properly on $X$. Also let $\B$ be a reflexive Banach space and $\pi$ be a continuous isometric representation. 

\begin{remark}
\label{cont remark}
By our assumption, $\B$ is reflexive and thus Asplund. Therefore, by Theorem \ref{Asplund implies continuous dual rep}, the assumption of continuity of $\pi$ implies that $\overline{\pi}$ is also continuous.
\end{remark}

Below, we will define the equivariant cohomology $H^k (X, \pi)$ and prove a general criterion for the vanishing of this cohomology. All the definitions below regarding cohomology already appeared in \cite{BS} for representations on Hilbert spaces and were generalized to the Banach setting in \cite{Koivisto}. The criterion for vanishing of cohomology appeared (in a somewhat different form) in \cite{Nowak} (and also in \cite{Koivisto}) and we claim no originality here. 

In order to define the equivariant cohomology, we introduce the following notation (based on \cite{BS}):
\begin{enumerate}
\item For $0\leq k\leq n$, denote by $\Sigma(k)$ the set of ordered $k$-simplices (i.e. $\sigma \in \Sigma(k)$ is and ordered $(k+1)$-tuple of vertices). 
\item For a map $\phi : \Sigma (k) \rightarrow \B$, $\phi$ is called \textit{alternating} if for every permutation $\gamma \in \Sym \lbrace 0,...,k \rbrace$ and every $(v_0,...,v_k) \in \Sigma (k)$,
$$\phi ((v_{\gamma (0)},...,v_{\gamma (k)})) = \sgn (\gamma) \phi ((v_0,...,v_k)).$$
Also, $\phi$ is called \textit{equivariant} if for every $g \in \Gamma$ and every $\sigma \in\Sigma(k)$,
$$\pi(g)\phi(\sigma)=\phi(g. \sigma).$$
\item For $0\leq k\leq n$, a $k$-cochain twisted by $\pi$ is a map $\phi : \Sigma (k) \rightarrow \B$ that is both alternating and equivariant. We denote $C^{k}(X,\pi)$ to be the space of all $k$-cochains twisted by $\pi$.  
\end{enumerate}

For $0 \leq k <n$, the \emph{differential} $d_k :C^{k}(X,\pi)\rightarrow C^{k+1}(X,\pi)$
is given by \[
d_k \phi(\sigma):=\sum_{i=0}^{k+1}(-1)^{i}\phi(\sigma_{i}),\;\sigma\in\Sigma(k+1)\]
where $\sigma_{i}=\left(v_{0},...,\hat{v_{i}},...,v_{k+1}\right)$
for $\left(v_{0},...,v_{k+1}\right)=\sigma\in\Sigma(k+1)$. By a standard computation $d_k \circ d_{k-1} =0$ and we define the $k$-th cohomology as $ H^k (X, \pi ) = \Ker (d_k) / \im (d_{k-1})$.

\begin{remark}
The reader should note that in the definition of the cohomology above, we made no use of the fact that $\B$ is a Banach space, and this definition apply in a much more general setting.
\end{remark}

We define a norm on $C^{k}(X,\pi)$ in order to make it into a Banach space:
\begin{enumerate}
\item We choose a set, denoted $\Sigma(k,\Gamma)\subseteq\Sigma(k)$, of representatives for the action of $\Gamma$ on $\Sigma (k)$. We note that by the equivariance assumption, $\phi \in C^k (X, \pi)$ is determined by its values on $\Sigma(k,\Gamma)$. We also note that by the assumption that the action of $\Gamma$ is cocompact, $\Sigma(k,\Gamma)$ is a finite set.
\item We extend the weight function $m$ defined above to ordered simplices, by forgetting the ordering, i.e., for every $(v_0,...,v_k) \in \Sigma (k)$, we define $m((v_0,...,v_k)) = m(\lbrace v_0,...,v_k \rbrace)$.
\item For a simplex $\sigma\in\Sigma(k)$, we denote $\Gamma_{\sigma}$ to be the point-wise stabilizer of $\sigma$, i.e., for $\sigma = (v_0,...,v_k)$, then $g \in \Gamma_{\sigma}$ if and only if for every $0 \leq i \leq k$ it holds that $g. v_i = v_i$.  We further denote $\vert \Gamma_{\sigma} \vert$ to be the measure of $\Gamma_{\sigma}$ with respect to the Haar measure of $\Gamma$. By the assumption that the action of $\Gamma$ is proper, it follows that $\vert \Gamma_{\sigma} \vert < \infty$.
\item We define a norm on $C^{k}(X,\pi)$ by
$$\Vert \phi \Vert = \left( \sum_{\sigma \in \Sigma (k, \Gamma)} \frac{m(\sigma)}{(k+1)!\left|\Gamma_{\sigma}\right|} \vert \phi (\sigma) \vert^2 \right)^{\frac{1}{2}},$$
where $\vert . \vert$ denotes the norm of $\B$.
\end{enumerate}
With the definitions above, $C^{k}(X,\pi)$ is a normed space and we leave it to the reader to verify that it is a Banach space (this is almost immediate due to (1) above). 

\begin{proposition}
\label{C^k is reflexive prop}
The space $C^{k}(X,\pi)$ is reflexive.
\end{proposition}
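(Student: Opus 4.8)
The plan is to identify $C^k(X,\pi)$ with a finite $\ell^2$-type sum of closed subspaces of the reflexive Banach space $\B$, and then invoke the standard facts that closed subspaces of reflexive spaces are reflexive and that finite $\ell^2$-sums of reflexive spaces are reflexive. The key observation is that a cochain $\phi \in C^k(X,\pi)$ is completely determined by its values on the finite set $\Sigma(k,\Gamma)$ of orbit representatives, so the evaluation map $\phi \mapsto (\phi(\sigma))_{\sigma \in \Sigma(k,\Gamma)}$ embeds $C^k(X,\pi)$ isometrically (after the obvious rescaling by $\sqrt{m(\sigma)/((k+1)!|\Gamma_\sigma|)}$ in each coordinate) into $\bigoplus_{\sigma \in \Sigma(k,\Gamma)} \B$, where the sum is a finite $\ell^2$-direct sum.

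First I would make the evaluation map precise: send $\phi$ to $(\phi(\sigma))_{\sigma\in\Sigma(k,\Gamma)} \in \prod_{\sigma\in\Sigma(k,\Gamma)}\B$. By the definition of the norm on $C^k(X,\pi)$, this is (up to positive scalar weights on each coordinate, which do not affect the topology or reflexivity) an isometry onto its image when the target carries the $\ell^2$-sum norm. Next I would argue that the image is a closed subspace: the constraints defining which tuples arise from genuine cochains are the alternating condition and the equivariance condition, both of which are closed linear conditions — indeed, for a fixed $\sigma \in \Sigma(k)$ and a fixed choice of representative $\sigma' \in \Sigma(k,\Gamma)$ with $\sigma = g.\sigma'$ for some $g$, the value $\phi(\sigma)$ must equal $\pi(g)\phi(\sigma')$ (possibly up to a sign coming from reordering), and the only genuine constraint on the tuple over $\Sigma(k,\Gamma)$ is that when two group elements carry the same representative to the same simplex, the resulting values must agree; equivalently, one needs $\phi(\sigma') = \pi(g)\phi(\sigma')$ for all $g \in \Gamma_{\sigma'}$, plus the analogous sign compatibility under the stabilizer's action by permutation. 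These are all closed linear subspace conditions on $\B$ in each coordinate, hence the image is a closed subspace of the finite $\ell^2$-sum.

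Then I would conclude: the finite $\ell^2$-sum $\bigoplus_{\sigma\in\Sigma(k,\Gamma)}\B$ of copies of the reflexive space $\B$ is reflexive (a finite direct sum of reflexive spaces is reflexive, in any reasonable cross-norm and in particular the $\ell^2$ one), a closed subspace of a reflexive space is reflexive, and an isometric isomorphism preserves reflexivity; hence $C^k(X,\pi)$ is reflexive.

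The main obstacle — and the only part requiring care rather than routine invocation — is verifying that the image of the evaluation map is actually closed, i.e., correctly pinning down which tuples $(x_\sigma)_{\sigma\in\Sigma(k,\Gamma)}$ extend to a well-defined cochain. One must check that the alternating and equivariance conditions are compatible (no overdetermination forcing $\phi=0$ spuriously) and that the resulting extension is well-defined regardless of the choices made in expressing an arbitrary ordered simplex in terms of a representative; the relevant point is that the stabilizer constraints $\pi(g)x_{\sigma'} = x_{\sigma'}$ for $g\in\Gamma_{\sigma'}$ (together with the sign constraints for stabilizer elements that permute the vertices of $\sigma'$) are linear and closed, so the admissible tuples form a closed linear subspace. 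Once that is granted, everything else is standard Banach space theory.
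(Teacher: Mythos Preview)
Your proposal is correct and follows essentially the same approach as the paper: both identify $C^k(X,\pi)$ with a closed subspace of the finite weighted $\ell^2$-sum $\B^{\Sigma(k,\Gamma)}$ and then invoke the standard facts that finite $\ell^2$-sums of reflexive spaces are reflexive and closed subspaces of reflexive spaces are reflexive. The paper's proof is more terse --- it simply asserts that $C^k(X,\pi)$ is a closed subspace of $\B^{\Sigma(k,\Gamma)}$ without spelling out the stabilizer/alternating constraints you discuss --- but the underlying argument is identical.
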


\begin{proof}
Define $\B^{\Sigma(k,\Gamma)} = \lbrace \phi : \Sigma(k,\Gamma)  \rightarrow \B \rbrace$ with the norm 
$$\Vert \phi \Vert = \left( \sum_{\sigma \in \Sigma (k, \Gamma)} \frac{m(\sigma)}{(k+1)!\left|\Gamma_{\sigma}\right|} \vert \phi (\sigma) \vert^2 \right)^{\frac{1}{2}}.$$
This is a reflexive Banach space, since it is a weighted $\ell^2$ sum of $\vert \Sigma(k,\Gamma) \vert$ copies of $\B$. We note that $C^k (X, \pi)$ is a closed subspace of $\B^{\Sigma(k,\Gamma)}$ and thus it is also reflexive.
\end{proof}

Choose $\Sigma ' (k, \Gamma) \subseteq \Sigma (k, \Gamma)$ to be a set of representatives of the action of the permutation group $\Sym \lbrace 0,...,k \rbrace$ on $\Sigma (k, \Gamma)$, i.e., for every $(v_0,...,v_k) \in \Sigma (k, \Gamma)$ there is a unique permutation $\gamma \in \Sym \lbrace 0,...,k \rbrace$ such that $(v_{\gamma (0)},...,v_{\gamma (k)}) \in \Sigma ' (k, \Gamma)$. By definition all the cochains in $C^{k}(X,\pi)$ are equivariant and alternating and thus every map in $C^k (X, \pi)$ is uniquely determined by its values on $\Sigma ' (k, \Gamma)$. However, it may be the case that not every map $\phi ' : \Sigma ' (k, \Gamma) \rightarrow \B$ can be extended to an equivariant and alternating map on $\Sigma (k)$. Below, we will give a necessary and sufficient condition for the existence of such extension.

For $\sigma \in \Sigma (k)$, we denote $\Gamma_\sigma^{+}$ and $\Gamma_\sigma^{-}$ to be the subsets of $\Gamma$ that (when restricted to $\sigma$) induce even and odd permutations on $\sigma$, i.e., for $\sigma = (v_0,...,v_k)$
\begin{align*}
\Gamma_\sigma^{+} =  \lbrace g \in \Gamma : g.(v_0,...,v_k) = (v_{\gamma (0)},...,v_{\gamma (k)}), \gamma \in \Sym \lbrace 0,...,k \rbrace \\ \text{ and } \gamma \text{ is an even permutation} \rbrace,
\end{align*}
\begin{align*}
\Gamma_\sigma^{-} =  \lbrace g \in \Gamma : g.(v_0,...,v_k) = (v_{\gamma (0)},...,v_{\gamma (k)}), \gamma \in \Sym \lbrace 0,...,k \rbrace \\ \text{ and } \gamma \text{ is an odd permutation} \rbrace.
\end{align*}
We note that $\Gamma_\sigma^+$ is a subgroup of $\Gamma$ and that 
$$\forall g \in \Gamma_\sigma^{+}, g.\Gamma_\sigma^{+} = \Gamma_\sigma^{+}, g.\Gamma_\sigma^{-} = \Gamma_\sigma^{-},$$
$$\forall g \in \Gamma_\sigma^{-}, g.\Gamma_\sigma^{+} = \Gamma_\sigma^{-}, g.\Gamma_\sigma^{-} = \Gamma_\sigma^{+}.$$

Define the subspace $\B_{\sigma, \pi} \subseteq \B$ to be the subspace of vectors $x \in \B$ such that 
$$\forall g \in \Gamma_\sigma^{+}, \pi (g).x = x,$$
and
$$\forall g \in \Gamma_\sigma^{-}, \pi (g).x = -x.$$

\begin{proposition}
\label{B_sigma,pi prop}
A map $\phi ' : \Sigma ' (k, \Gamma) \rightarrow \B$ can be extended (uniquely) to a map $\phi \in C^k (X, \pi)$ if and only if for every $\sigma \in \Sigma ' (k, \Gamma)$ it holds that $\phi ' (\sigma) \in \B_{\sigma, \pi}$.  
\end{proposition}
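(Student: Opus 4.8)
The plan is to verify that the two natural compatibility constraints an alternating, equivariant cochain must satisfy are exactly the conditions "$\phi'(\sigma) \in \B_{\sigma,\pi}$", and then to check that once those constraints hold there is a well-defined extension.

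First I would establish necessity. Suppose $\phi \in C^k(X,\pi)$ and $\sigma = (v_0,\dots,v_k) \in \Sigma'(k,\Gamma)$. Take $g \in \Gamma_\sigma^+$. Then $g.\sigma = (v_{\gamma(0)},\dots,v_{\gamma(k)})$ for an even permutation $\gamma$, so by equivariance $\pi(g)\phi(\sigma) = \phi(g.\sigma) = \phi((v_{\gamma(0)},\dots,v_{\gamma(k)}))$, and by the alternating property the right-hand side equals $\sgn(\gamma)\phi(\sigma) = \phi(\sigma)$. Hence $\pi(g)\phi(\sigma) = \phi(\sigma)$. The same computation with $g \in \Gamma_\sigma^-$ gives $\sgn(\gamma) = -1$, so $\pi(g)\phi(\sigma) = -\phi(\sigma)$. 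Thus $\phi(\sigma) \in \B_{\sigma,\pi}$, and since $\phi|_{\Sigma'(k,\Gamma)}$ is the map $\phi'$ being extended, necessity follows.

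Next I would prove sufficiency by constructing the extension explicitly. Given $\phi'$ with $\phi'(\sigma) \in \B_{\sigma,\pi}$ for all $\sigma \in \Sigma'(k,\Gamma)$, define $\phi$ on an arbitrary $\eta \in \Sigma(k)$ as follows: every $\Gamma$-orbit in $\Sigma(k)$ meets $\Sigma(k,\Gamma)$, and every $\Sym\{0,\dots,k\}$-orbit in $\Sigma(k,\Gamma)$ meets $\Sigma'(k,\Gamma)$, so there exist $g \in \Gamma$, a permutation $\gamma$, and $\sigma \in \Sigma'(k,\Gamma)$ with $\eta = g.(\sigma_{\gamma})$, where $\sigma_\gamma$ denotes $\sigma$ reordered by $\gamma$; set $\phi(\eta) := \sgn(\gamma)\,\pi(g)\phi'(\sigma)$. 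The content is that this is well-defined: if $\eta = g_1.(\sigma^{(1)}_{\gamma_1}) = g_2.(\sigma^{(2)}_{\gamma_2})$, then $\sigma^{(1)}$ and $\sigma^{(2)}$ lie in the same $\Gamma$-orbit and the same $\Sym$-orbit of a common element of $\Sigma(k,\Gamma)$; because $\Sigma'(k,\Gamma)$ picks one representative per $\Sym$-orbit within $\Sigma(k,\Gamma)$ and $\Sigma(k,\Gamma)$ picks one per $\Gamma$-orbit, one deduces $\sigma^{(1)} = \sigma^{(2)} =: \sigma$, and then $g_2^{-1}g_1.(\sigma_{\gamma_1}) = \sigma_{\gamma_2}$, i.e. $h := g_2^{-1}g_1$ acts on $\sigma$ inducing the permutation $\gamma_2\gamma_1^{-1}$ (taking care with the order of composition). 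Thus $h \in \Gamma_\sigma^+$ if $\gamma_2\gamma_1^{-1}$ is even and $h \in \Gamma_\sigma^-$ if odd, so by $\phi'(\sigma) \in \B_{\sigma,\pi}$ we get $\pi(h)\phi'(\sigma) = \sgn(\gamma_2\gamma_1^{-1})\phi'(\sigma) = \sgn(\gamma_2)\sgn(\gamma_1)\phi'(\sigma)$; rearranging gives $\sgn(\gamma_1)\pi(g_1)\phi'(\sigma) = \sgn(\gamma_2)\pi(g_2)\phi'(\sigma)$, which is exactly the consistency needed. Finally I would note that with this definition $\phi$ is manifestly equivariant (acting by $g'$ on $\eta$ replaces $g$ by $g'g$) and alternating (reordering $\eta$ by a permutation $\delta$ replaces $\gamma$ by $\delta\gamma$, multiplying $\phi(\eta)$ by $\sgn(\delta)$), and that $\phi$ restricts to $\phi'$ on $\Sigma'(k,\Gamma)$ (take $g = e$, $\gamma = \id$). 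Uniqueness is immediate since equivariance and the alternating property force the values on all of $\Sigma(k)$ from the values on $\Sigma'(k,\Gamma)$.

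The main obstacle is purely bookkeeping: tracking the permutation-composition conventions so that "$h$ induces $\gamma_2\gamma_1^{-1}$ on $\sigma$" comes out with the correct order, and confirming that the two layers of representative-choosing ($\Gamma$-orbits, then $\Sym$-orbits) genuinely force $\sigma^{(1)} = \sigma^{(2)}$ rather than merely $\Gamma$-equivalence. There is no analytic difficulty here — reflexivity, continuity of $\pi$, and the Banach-space structure play no role in this proposition; it is entirely a statement about the combinatorics of the $\Gamma$- and $\Sym$-actions together with the linear subspace $\B_{\sigma,\pi}$.
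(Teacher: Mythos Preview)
Your proof follows essentially the same route as the paper's: necessity via the equivariance/alternating identities, and sufficiency by defining $\phi(g.\sigma_\gamma)=\sgn(\gamma)\,\pi(g)\phi'(\sigma)$ and checking well-definedness using the condition $\phi'(\sigma)\in\B_{\sigma,\pi}$. The paper simply fixes a single $\sigma\in\Sigma'(k,\Gamma)$ and checks consistency among the pairs $(g,\gamma)$ for that $\sigma$, whereas you additionally argue that the base simplex $\sigma$ is itself forced; your justification for $\sigma^{(1)}=\sigma^{(2)}$ (``same $\Gamma$-orbit and same $\Sym$-orbit'') is not quite right in general, since $\sigma^{(1)}_{\gamma_1}$ and $\sigma^{(2)}_{\gamma_2}$ being $\Gamma$-equivalent does not force $\sigma^{(1)}$ and $\sigma^{(2)}$ to be $\Gamma$-equivalent --- but the paper's proof does not address this point either, so your argument matches the paper's level of detail.
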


\begin{proof}
Let $\phi ' : \Sigma ' (k, \Gamma) \rightarrow \B$ be some map. 

Assume first that there is a map $\phi \in C^k (X, \pi)$ such that $\left. \phi \right\vert_{\Sigma ' (k, \Gamma)} = \phi '$. Let $(v_0,...,v_k) \in \Sigma ' (k, \Gamma)$ and $g \in \Gamma_\sigma^{+}$. Also let $\gamma \in \Sym \lbrace 0,...,k \rbrace$ such that $\gamma$ is even and 
$g.(v_0,...,v_k) = (v_{\gamma (0)},...,v_{\gamma (k)})$. Then it holds that
\begin{dmath*}
\pi (g). \phi ' ((v_0,...,v_k)) = \pi (g). \phi ((v_0,...,v_k)) =^{\phi \text{ is equivariant}} 
 \phi (g.(v_0,...,v_k)) = \phi ((v_{\gamma (0)},...,v_{\gamma (k)})) =^{\phi \text{ is alternating}} 
  \phi ((v_0,...,v_k)) = \phi ' ((v_0,...,v_k)),
\end{dmath*}
i.e., for every $\sigma \in \Sigma ' (k, \Gamma)$ and every $g \in \Gamma_\sigma^{+}$, $\pi (g). \phi ' (\sigma) = \phi ' (\sigma)$. By a similar computation, it follows that for every $\sigma \in \Sigma ' (k, \Gamma)$ and every $g \in \Gamma_\sigma^{-}$, $\pi (g). \phi ' (\sigma) = -\phi ' (\sigma)$. Thus, for every $\sigma \in \Sigma ' (k, \Gamma)$ it holds that $\phi ' (\sigma) \in \B_{\sigma, \pi}$.

In the other direction, assume that for every $\sigma \in \Sigma ' (k, \Gamma)$ it holds that $\phi ' (\sigma) \in \B_{\sigma, \pi}$. For every $\gamma \in \Sym \lbrace 0,...,k \rbrace$, every $g \in \Gamma$ and every $(v_0,...,v_k) \in \Sigma ' (k, \Gamma)$, define
$$\phi (g. (v_{\gamma (0)},...,v_{\gamma (k)})) = \pi (g) \sgn (\gamma) \phi ' ((v_0,...,v_k)).$$
If we show that $\phi$ above is well defined, it will follow from its definition that it is equivariant and alternating. Fix  $\sigma = (v_0,...,v_k) \in \Sigma ' (k, \Gamma)$ and let $\gamma, \gamma ' \in \Sym \lbrace 0,...,k \rbrace, g, g' \in \Gamma$ be such that 
$$ g. (v_{\gamma (0)},...,v_{\gamma (k)}) = g '. (v_{\gamma ' (0)},...,v_{\gamma ' (k)}).$$
Then 
$$  (v_{\gamma (\gamma ')^{-1} (0)},...,v_{\gamma (\gamma ')^{-1}(k)}) = (g^{-1} g '). (v_{0},...,v_{k})$$
and therefore $g^{-1} g ' \in \Gamma_\sigma^+ \cup \Gamma_\sigma^-$ and the sign of the permutation induced by $g^{-1} g '$ on $\sigma$ is exactly $\sgn (\gamma (\gamma ')^{-1}) = \sgn (\gamma) \sgn (\gamma ')$. From the assumption that $\phi ' ((v_0,...,v_k)) \in \B_{\sigma, \pi}$ it follows that 
$$\pi (g^{-1} g') \phi ' ((v_0,...,v_k)) = \sgn (\gamma) \sgn (\gamma ') \phi ' ((v_0,...,v_k)),$$
or equivalently
$$\sgn (\gamma) \sgn (\gamma ') \pi (g^{-1} g') \phi ' ((v_0,...,v_k))  =  \phi ' ((v_0,...,v_k)),$$
Thus
\begin{dmath*}
\pi (g) \sgn (\gamma) \phi ' ((v_0,...,v_k)) = \pi (g) \sgn (\gamma) \sgn (\gamma) \sgn (\gamma ') \pi (g^{-1} g') \phi ' ((v_0,...,v_k)) = 
\pi (g' )  \sgn (\gamma ') \phi ' ((v_0,...,v_k)),
\end{dmath*}
and $\phi$ is well-defined.
\end{proof}

All the results above were stated for $\pi$, but since $\overline{\pi}$ is a representation of $\Gamma$ on a reflexive Banach space they pass automatically to $\overline{\pi}$, i.e., we can define $C^k (X,\overline{\pi})$ as above and by the same considerations it follows that $C^k (X,\overline{\pi})$ is also a reflexive Banach space. We also denote $\overline{d_k}: C^{k}(X,\overline{\pi})\rightarrow C^{k+1}(X,\overline{\pi})$ to be the differential defined as above. 
 
The reason for considering $C^{k}(X,\overline{\pi})$ is that there is a natural coupling between $C^k (X,\pi)$ and $C^{k}(X,\overline{\pi})$:  Let $(.,.)$ denote the usual coupling between $\B$ and $\B^*$ and for $\phi \in C^k (X, \pi), \psi \in C^k (X,\overline{\pi})$ define
\[
\left\langle \phi,\psi\right\rangle :=\sum_{\sigma\in\Sigma(k,\Gamma)}\frac{m(\sigma)}{(k+1)!\left|\Gamma_{\sigma}\right|} ( \phi(\sigma),\psi(\sigma)) . \]

With the above coupling, $C^k (X, \overline{\pi}) \subseteq (C^k (X, \pi))^*$. Actually, since $\B$ is reflexive, there is an isomorphism between $C^k (X, \overline{\pi})$ and  $(C^k (X, \pi))^*$ (see \cite[Proposition 28]{Koivisto}), but we will make no use of this fact. Given this coupling, we denote $d_k^* :C^{k+1}(X,\overline{\pi})\rightarrow C^{k}(X,\overline{\pi})$ to be the adjoint operator of $d_k$ and $\overline{d_k}^* :C^{k+1}(X,\pi)\rightarrow C^{k}(X,\pi)$ to be the adjoint operator of $\overline{d_k}$.

We recall that for a Banach space $\B$, the duality mapping is a mapping $j : \B \rightarrow 2^{\B^*}$ defined as
$$j (x) = \lbrace x^* \in \B^* : \vert x \vert = \vert x^* \vert, (x,x^*) = \vert x \vert^2 \rbrace,$$
(the fact that the set defined by $j(x)$ is non-empty follows immediately from Hahn-Banach). By our assumption, $\B$ is reflexive and thus we also have the duality mapping $\overline{j} : \B^* \rightarrow 2^{\B} (= 2^{\B^{**}})$.

We define maps $J: C^k (X, \pi) \rightarrow 2^{C^k (X, \overline{\pi})}$ and $\overline{J}: C^k (X, \overline{\pi}) \rightarrow 2^{C^k (X, \pi)}$ by 
$$\forall \phi \in C^k (X, \pi), J \phi = \lbrace \psi \in C^k (X, \overline{\pi}) : \forall \sigma \in \Sigma (k), \psi (\sigma) \in j(\phi (\sigma)) \rbrace, $$
$$\psi \in C^k (X, \overline{\pi}), \overline{J} \psi = \lbrace \phi \in C^k (X, \pi) : \forall \sigma \in \Sigma (k), \phi (\sigma) \in \overline{j} (\psi (\sigma)) \rbrace. $$
\begin{proposition}
\label{J prop}
Let $X, \B, \pi$, $J, \overline{J}$ be as above and $\phi \in C^k (X, \pi)$, $\psi \in C^k (X, \overline{\pi})$. Then $J \phi$, $\overline{J} \psi$ are non empty sets and  
$$\forall \phi^* \in J \phi, \Vert \phi^* \Vert^2 = \Vert \phi \Vert^2 = \langle \phi, \phi^* \rangle,$$ 
$$\forall \psi^* \in \overline{J} \psi, \Vert \psi^* \Vert^2 = \Vert \psi \Vert^2 = \langle \psi^* , \psi \rangle.$$
\end{proposition}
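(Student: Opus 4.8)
The plan is to prove that $J\phi$ is nonempty for every $\phi\in C^k(X,\pi)$ (the nonemptiness of $\overline{J}\psi$ and both norm identities then follow with little extra work), and the heart of the matter is to choose the duality vectors compatibly with the alternating and equivariant constraints. First I would record, via the forward direction of Proposition \ref{B_sigma,pi prop} applied to $\phi$, that for each representative $\sigma\in\Sigma'(k,\Gamma)$ one has $\phi(\sigma)\in\B_{\sigma,\pi}$, i.e.\ $\pi(g)\phi(\sigma)=\epsilon_\sigma(g)\phi(\sigma)$ for all $g$ in the set-wise stabilizer $G_\sigma:=\Gamma_\sigma^{+}\cup\Gamma_\sigma^{-}$, where $\epsilon_\sigma\colon G_\sigma\to\lbrace\pm1\rbrace$ is the sign character ($\epsilon_\sigma\equiv 1$ on $\Gamma_\sigma^{+}$, $\equiv-1$ on $\Gamma_\sigma^{-}$). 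Note $G_\sigma$ is compact — it contains the compact group $\Gamma_\sigma$ with index $\leq(k+1)!$ — so it carries a finite Haar measure $|G_\sigma|$, and $\overline{\pi}$ is strongly continuous by Remark \ref{cont remark}.

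Next, for each $\sigma\in\Sigma'(k,\Gamma)$ I would pick (Hahn--Banach) some $y^{*}_\sigma\in j(\phi(\sigma))$ and symmetrize it by setting
$$\psi'(\sigma):=\frac{1}{|G_\sigma|}\int_{G_\sigma}\epsilon_\sigma(g)\,\overline{\pi}(g)\,y^{*}_\sigma\,dg,$$
a Bochner integral, well defined by the remarks above. The two things to check are that $\psi'(\sigma)\in j(\phi(\sigma))$ and that $\psi'(\sigma)$ lies in the analogue $\B^{*}_{\sigma,\overline{\pi}}$ of $\B_{\sigma,\pi}$ for $\overline{\pi}$. For the first, using $\pi(g)\phi(\sigma)=\epsilon_\sigma(g)\phi(\sigma)$ and the defining identity $\langle x,\overline{\pi}(g)y\rangle=\langle\pi(g^{-1})x,y\rangle$ one gets $(\phi(\sigma),\epsilon_\sigma(g)\overline{\pi}(g)y^{*}_\sigma)=|\phi(\sigma)|^2$ for every $g$, hence $(\phi(\sigma),\psi'(\sigma))=|\phi(\sigma)|^2$ by linearity of the coupling; combining with $|\psi'(\sigma)|\leq|y^{*}_\sigma|=|\phi(\sigma)|$ (as $\overline{\pi}$ is isometric) and the inequality $|\phi(\sigma)|^2\leq|\phi(\sigma)|\,|\psi'(\sigma)|$ forces $|\psi'(\sigma)|=|\phi(\sigma)|$. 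For the second, left-invariance of the Haar measure on $G_\sigma$ together with the homomorphism property of $\epsilon_\sigma$ gives $\overline{\pi}(h)\psi'(\sigma)=\epsilon_\sigma(h)\psi'(\sigma)$ for all $h\in G_\sigma$.

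Then I would invoke Proposition \ref{B_sigma,pi prop} for $\overline{\pi}$ to extend $\psi'$ uniquely to $\psi\in C^{k}(X,\overline{\pi})$, and finish by checking $\psi(\sigma)\in j(\phi(\sigma))$ for \emph{every} $\sigma\in\Sigma(k)$: writing $\sigma=g\cdot(v_{\gamma(0)},\dots,v_{\gamma(k)})$ with $(v_0,\dots,v_k)\in\Sigma'(k,\Gamma)$ and $\gamma\in\Sym\lbrace0,\dots,k\rbrace$, equivariance and alternation give $\psi(\sigma)=\sgn(\gamma)\overline{\pi}(g)\psi'((v_0,\dots,v_k))$ and $\phi(\sigma)=\sgn(\gamma)\pi(g)\phi((v_0,\dots,v_k))$, and since $\overline{\pi}(g)$ is isometric one has $\sgn(\gamma)\overline{\pi}(g)\,j(x)\subseteq j(\sgn(\gamma)\pi(g)x)$ (the same short computation with the defining identity of $\overline{\pi}$), so $\psi(\sigma)\in j(\phi(\sigma))$. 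Thus $\psi\in J\phi$. The norm identities are then immediate: for any $\phi^{*}\in J\phi$ and every $\sigma$ we have $(\phi(\sigma),\phi^{*}(\sigma))=|\phi(\sigma)|^2$ and $|\phi^{*}(\sigma)|=|\phi(\sigma)|$, so substituting into the definitions of $\langle\phi,\phi^{*}\rangle$ and of the $C^{k}$-norms gives $\langle\phi,\phi^{*}\rangle=\Vert\phi\Vert^2=\Vert\phi^{*}\Vert^2$; the statements for $\overline{J}\psi$ follow by running the identical argument with $\pi$ and $\overline{\pi}$ swapped, which is legitimate since $\B$ is reflexive (so $\overline{\overline{\pi}}=\pi$ and $\overline{j}\colon\B^{*}\to2^{\B}$ is the genuine duality map).

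The main obstacle is exactly the coherence issue flagged above: a naive pointwise choice of $y^{*}_\sigma$ need not produce an alternating, equivariant cochain, because the extension criterion of Proposition \ref{B_sigma,pi prop} demands that the values lie in the ``$\epsilon_\sigma$-isotypic'' subspaces $\B^{*}_{\sigma,\overline{\pi}}$; the averaging of $y^{*}_\sigma$ over $G_\sigma$ against the sign character is precisely the device that lands the value in that subspace while, crucially, keeping it inside the duality set $j(\phi(\sigma))$. Once this is in place the rest is bookkeeping with the definitions of the coupling and the norms.
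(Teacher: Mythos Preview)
Your proof is correct and follows essentially the same route as the paper: pick a Hahn--Banach dual vector at each representative in $\Sigma'(k,\Gamma)$, average it over the set-wise stabilizer against the sign character to land in $\B^{*}_{\sigma,\overline{\pi}}\cap j(\phi(\sigma))$, and then extend via Proposition~\ref{B_sigma,pi prop}. Your packaging with the group $G_\sigma=\Gamma_\sigma^{+}\cup\Gamma_\sigma^{-}$ and the character $\epsilon_\sigma$ is a bit cleaner than the paper's separate integrals over $\Gamma_\sigma^{+}$ and $\Gamma_\sigma^{-}$ (they agree since $|G_\sigma|=2|\Gamma_\sigma^{+}|=2|\Gamma_\sigma^{-}|$ when $\Gamma_\sigma^{-}\neq\emptyset$), and you are more explicit than the paper in verifying that the extended cochain satisfies $\psi(\sigma)\in j(\phi(\sigma))$ for \emph{all} $\sigma\in\Sigma(k)$ and in writing out the norm identities, both of which the paper leaves to the reader.
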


\begin{proof}
We will prove the assertions above only for $J \phi$, since the proof for $\overline{J} \psi$ is similar.  

We will only show that $J \phi$ is non-empty: the fact that for every $\phi^* \in J \phi$, 
$$ \Vert \phi^* \Vert^2 = \Vert \phi \Vert^2 = \langle \phi, \phi^* \rangle,$$
follows from straight-forward a computation that is left for to the reader.

Fix $\phi \in C^k (X, \pi)$. Choose $\Sigma ' (k, \Gamma) \subseteq \Sigma (k, \Gamma)$ as above to be a set of representatives of the action of the permutation group $\Sym \lbrace 0,...,k \rbrace$ on $\Sigma (k, \Gamma)$. By Proposition \ref{B_sigma,pi prop}, it is enough to show that there is $\psi ' : \Sigma ' (k, \Gamma) \rightarrow \B^*$ such that for every $\sigma \in \Sigma ' (k, \Gamma)$ it holds that $\psi ' (\sigma) \in \B^{*}_{\sigma, \overline{\pi}}$ and $\psi ' (\sigma) \in j (\phi (\sigma))$.  

For every $\sigma \in \Sigma ' (k, \Gamma)$, we choose some $x_\sigma^* \in j(\phi (\sigma))$ and define
$$\psi ' (\sigma) = \frac{1}{2 \vert \Gamma_\sigma^+ \vert} \int_{\Gamma_\sigma^+} \overline{\pi} (g). x_\sigma^* d \mu (g) -  \frac{1}{2 \vert \Gamma_\sigma^- \vert} \int_{ \Gamma_\sigma^-} \overline{\pi} (g). x_\sigma^* d \mu (g),$$
(if $\Gamma_\sigma^-$ is an empty set, then the second integral is omitted). These integrals is well defined because by our assumptions the action of $\overline{\pi}$ is continuous and $\Gamma_\sigma^+,  \Gamma_\sigma^-$ are compact sets. 

Recall that for every $g ' \in \Gamma_\sigma^+$ it holds that $g '. \Gamma_\sigma^+ =\Gamma_\sigma^+, g '. \Gamma_\sigma^- =\Gamma_\sigma^-$ and that the action of $\Gamma$ preserves the Haar measure. Thus for every $g ' \in \Gamma_\sigma^+$ and every $\sigma \in \Sigma ' (k, \Gamma)$ it holds that 
\begin{dmath*}
\overline{\pi} (g '). \psi ' (\sigma) = 
 \frac{1}{2 \vert \Gamma_\sigma^+ \vert} \int_{\Gamma_\sigma^+} \overline{\pi} (g ' g). x_\sigma^* d \mu (g) -  \frac{1}{2 \vert \Gamma_\sigma^- \vert} \int_{ \Gamma_\sigma^-} \overline{\pi} (g ' g). x_\sigma^* d \mu (g) =^{g'' = g' g} 
  \frac{1}{2 \vert \Gamma_\sigma^+ \vert} \int_{g ' . \Gamma_\sigma^+} \overline{\pi} (g ''). x_\sigma^* d \mu (g '') -  \frac{1}{2 \vert \Gamma_\sigma^- \vert} \int_{ g ' . \Gamma_\sigma^-} \overline{\pi} (g ''). x_\sigma^* d \mu (g '') = 
\frac{1}{2 \vert \Gamma_\sigma^+ \vert} \int_{\Gamma_\sigma^+} \overline{\pi} (g ''). x_\sigma^* d \mu (g '') -  \frac{1}{2 \vert \Gamma_\sigma^- \vert} \int_{ \Gamma_\sigma^-} \overline{\pi} (g ''). x_\sigma^* d \mu (g '') = 
\psi ' (\sigma).
\end{dmath*}

Similarly, since for every $g' \in \Gamma_\sigma^-$ it holds that $g '. \Gamma_\sigma^+ =\Gamma_\sigma^-, g '. \Gamma_\sigma^+ =\Gamma_\sigma^+$
it follows that for every $g' \in  \Gamma_\sigma^-$ and every $\sigma \in \Sigma ' (k, \Gamma)$,
$$\overline{\pi} (g '). \psi ' (\sigma) = -  \psi ' (\sigma)$$
and thus $\psi ' (\sigma) \in \B^{*}_{\sigma, \overline{\pi}}$.

We note that for every $\sigma \in \Sigma ' (k, \Gamma)$ it holds that 
\begin{dmath*}
(\phi (\sigma), \psi ' (\sigma)) =  
(\phi (\sigma), \frac{1}{2 \vert \Gamma_\sigma^+ \vert} \int_{\Gamma_\sigma^+} \overline{\pi} (g). x_\sigma^* d \mu (g) -  \frac{1}{2 \vert \Gamma_\sigma^- \vert} \int_{ \Gamma_\sigma^-} \overline{\pi} (g). x_\sigma^* d \mu (g)) = 
\frac{1}{2 \vert \Gamma_\sigma^+ \vert} \int_{\Gamma_\sigma^+} (\phi (\sigma),  \overline{\pi} (g). x_\sigma^* ) d \mu (g) - \frac{1}{2 \vert \Gamma_\sigma^- \vert} \int_{\Gamma_\sigma^-} (\phi (\sigma),  \overline{\pi} (g). x_\sigma^* ) d \mu (g)= 
\frac{1}{2 \vert \Gamma_\sigma^+ \vert} \int_{\Gamma_\sigma^+} (\pi (g^{-1}).\phi (\sigma),  x_\sigma^* ) d \mu (g) - \frac{1}{2 \vert \Gamma_\sigma^- \vert} \int_{\Gamma_\sigma^-} (\pi (g^{-1}). \phi (\sigma), x_\sigma^* ) d \mu (g)= 
\frac{1}{2 \vert \Gamma_\sigma^+ \vert} \int_{\Gamma_\sigma^+} (\phi (\sigma),  x_\sigma^* ) d \mu (g) - \frac{1}{2 \vert \Gamma_\sigma^- \vert} \int_{\Gamma_\sigma^-} (- \phi (\sigma), x_\sigma^* ) d \mu (g)= 
\frac{1}{2 \vert \Gamma_\sigma^+ \vert} \int_{\Gamma_\sigma^+} \vert \phi (\sigma) \vert^2 d \mu (g) + \frac{1}{2 \vert \Gamma_\sigma^- \vert} \int_{\Gamma_\sigma^-} \vert \phi (\sigma) \vert^2 d \mu (g)= 
 \vert \phi (\sigma) \vert^2,
\end{dmath*}
and that 
\begin{dmath*}
\vert \psi ' (\sigma) \vert = 
\left\vert\frac{1}{2 \vert \Gamma_\sigma^+ \vert} \int_{\Gamma_\sigma^+} \overline{\pi} (g). x_\sigma^* d \mu (g) -  \frac{1}{2 \vert \Gamma_\sigma^- \vert} \int_{ \Gamma_\sigma^-} \overline{\pi} (g). x_\sigma^* d \mu (g) \right\vert \leq \\ 
\frac{1}{2 \vert \Gamma_\sigma^+ \vert} \int_{\Gamma_\sigma^+} \vert \overline{\pi} (g). x_\sigma^* \vert d \mu (g) +  \frac{1}{2 \vert \Gamma_\sigma^- \vert} \int_{ \Gamma_\sigma^-} \vert \overline{\pi} (g). x_\sigma^* \vert d \mu (g) = \\
\frac{1}{2 \vert \Gamma_\sigma^+ \vert} \int_{\Gamma_\sigma^+} \vert \phi (\sigma) \vert d \mu (g) +  \frac{1}{2 \vert \Gamma_\sigma^- \vert} \int_{ \Gamma_\sigma^-} \vert \phi (\sigma)  \vert d \mu (g) = \vert \phi (\sigma) \vert
\end{dmath*}
and therefore $\psi ' (\sigma) \in j(\phi (\sigma))$ as needed.
\end{proof}

Below, we will make use of changing the order of summation when calculating norms of maps $C^k(X,\pi)$ or coupling between maps of $C^k(X,\pi)$ and $C^k(X,\overline{\pi})$. For this, we will need the following: for $0\leq l<k\leq n$ and $\tau \in \Sigma (l)$, $\sigma \in \Sigma (k)$, we denote $\tau \subseteq \sigma$ if $\sigma$ contains $\tau$ as a set (without respecting the ordering), i.e., for $\sigma = (v_0,...,v_k), \tau = (w_0,...,w_l)$, $\tau \subseteq \sigma$ if $\lbrace w_0,...,w_l \rbrace \subseteq \lbrace v_0,...,v_k \rbrace$. The following technical proposition is taken from \cite{BS}, \cite{DJ1}:

\begin{proposition}
\label{changing the order of sum prop}
\cite[Lemma 1.3]{BS}, \cite[Lemma 3.3]{DJ1} For $0\leq l<k\leq n$, let $f=f(\tau,\sigma)$ be
a $\Gamma$-invariant function on the set of pairs $\left(\tau,\sigma\right)$,
where $\tau \in \Sigma (l)$, $\sigma \in \Sigma (k)$ with $\tau\subseteq \sigma$. Then \[
\sum_{\sigma\in\Sigma(k,\Gamma)}\sum_{\begin{array}{c}
{\scriptstyle \tau\in\Sigma(l)}\\
{\scriptstyle \tau\subseteq \sigma}\end{array}}\frac{f(\tau,\sigma)}{\left|\Gamma_{\sigma}\right|}=\sum_{\tau\in\Sigma(l,\Gamma)}\sum_{\begin{array}{c}
{\scriptstyle \sigma\in\Sigma(k)}\\
{\scriptstyle \tau\subseteq\sigma}\end{array}}\frac{f(\tau,\sigma)}{\left|\Gamma_{\tau}\right|}\]
\end{proposition}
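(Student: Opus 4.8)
The plan is to deduce both sides of the identity from a single orbit-counting lemma. Write $\widehat{\Sigma}_{l,k}$ for the set of pairs $(\tau,\sigma)$ with $\tau\in\Sigma(l)$, $\sigma\in\Sigma(k)$ and $\tau\subseteq\sigma$. This is a $\Gamma$-set whose orbit set is finite (by cocompactness), and the stabilizer $\Gamma_{(\tau,\sigma)}$ of a pair (the elements fixing every vertex of $\sigma$, equivalently of $\tau\cup\sigma$) has finite Haar measure (by properness). The key elementary observation is that $\Gamma_{(\tau,\sigma)}=\Gamma_\sigma$: an element fixing every vertex of $\sigma$ fixes every vertex of $\tau$ because $\tau\subseteq\sigma$ as sets, and conversely. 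I will then prove a general lemma: if $p\colon Z\to Y$ is a $\Gamma$-equivariant map of $\Gamma$-sets with finite fibers, with $Z/\Gamma$ and $Y/\Gamma$ finite and all point stabilizers of finite (positive) Haar measure, then for every $\Gamma$-invariant $h\colon Z\to\mathbb{C}$,
\[
\sum_{y\in Y(\Gamma)}\frac{1}{|\Gamma_y|}\sum_{z\in p^{-1}(y)}h(z)\;=\;\sum_{z\in Z(\Gamma)}\frac{h(z)}{|\Gamma_z|},
\]
where $Y(\Gamma)$ and $Z(\Gamma)$ are sets of representatives for the respective orbit sets. Granting the lemma, I apply it twice with $Z=\widehat{\Sigma}_{l,k}$ and $h=f$: once with $Y=\Sigma(k)$ and $p(\tau,\sigma)=\sigma$ (fibers finite since $\sigma$ has $k+1$ vertices), once with $Y=\Sigma(l)$ and the map $(\tau,\sigma)\mapsto\tau$ (fibers finite since $X$ is locally finite, so only finitely many $k$-simplices contain $\tau$). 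The left-hand sides of the two applications are exactly the left- and right-hand sides of the Proposition, and both right-hand sides equal $\sum_{(\tau,\sigma)\in\widehat{\Sigma}_{l,k}(\Gamma)}\frac{f(\tau,\sigma)}{|\Gamma_{(\tau,\sigma)}|}=\sum_{(\tau,\sigma)\in\widehat{\Sigma}_{l,k}(\Gamma)}\frac{f(\tau,\sigma)}{|\Gamma_\sigma|}$, using $\Gamma_{(\tau,\sigma)}=\Gamma_\sigma$. This proves the Proposition.

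For the general lemma, I would first note that the right-hand sum is independent of the choice of transversal: replacing $z$ by $gz$ replaces $\Gamma_z$ by $g\Gamma_z g^{-1}$, which has the same Haar measure because $\Gamma$ is unimodular, while $h(gz)=h(z)$. To compare the two sides, expand the left-hand side as $\sum_{y\in Y(\Gamma)}\sum_{z\in p^{-1}(y)}\frac{h(z)}{|\Gamma_y|}$ and partition the pairs $(y,z)$ occurring there according to the $\Gamma$-orbit of $z$. Given an orbit with representative $z_0\in Z(\Gamma)$ and $y_0:=p(z_0)$, the orbit $\Gamma y_0$ meets $Y(\Gamma)$ in a single point $y_1$, so only $y=y_1$ contributes to that orbit's part of the sum; and the number of $z\in\Gamma z_0$ with $p(z)=y_1$ equals $[\Gamma_{y_0}:\Gamma_{z_0}]=|\Gamma_{y_0}|/|\Gamma_{z_0}|$, by the orbit--stabilizer relation applied to the finite $\Gamma_{y_0}$-set $p^{-1}(y_0)$ together with the inclusion $\Gamma_{z_0}\subseteq\Gamma_{y_0}$ (valid since $p$ is equivariant). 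Hence this orbit contributes $\frac{h(z_0)}{|\Gamma_{y_1}|}\cdot\frac{|\Gamma_{y_0}|}{|\Gamma_{z_0}|}=\frac{h(z_0)}{|\Gamma_{z_0}|}$ (using $|\Gamma_{y_1}|=|\Gamma_{y_0}|$ and that $h$ is constant on orbits), and summing over all orbits recovers $\sum_{z\in Z(\Gamma)}\frac{h(z)}{|\Gamma_z|}$.

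I expect no conceptual obstacle: the difficulty is entirely bookkeeping. One must keep straight that $\Gamma_\sigma,\Gamma_\tau$ are pointwise stabilizers of ordered simplices, maintain the identification $\Gamma_{(\tau,\sigma)}=\Gamma_\sigma$, and assemble the finiteness inputs (finitely many orbits from cocompactness, finite stabilizer measures from properness, finite fibers from local finiteness of $X$), together with the elementary fact that for a finite-index inclusion $\Gamma_{z_0}\subseteq\Gamma_{y_0}$ of subgroups of finite Haar measure one has $|\Gamma_{y_0}|=[\Gamma_{y_0}:\Gamma_{z_0}]\,|\Gamma_{z_0}|$. This is precisely the content of \cite[Lemma 1.3]{BS} and \cite[Lemma 3.3]{DJ1}, so one could equally well simply invoke those; the argument above is the self-contained version.
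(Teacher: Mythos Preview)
Your proof is correct. The paper does not actually prove this proposition: it is stated with attribution to \cite[Lemma 1.3]{BS} and \cite[Lemma 3.3]{DJ1} and used as a black box. Your orbit-counting argument via the auxiliary lemma on $\Gamma$-equivariant maps $p\colon Z\to Y$ is exactly the standard proof one finds in those references, so there is no genuine divergence in approach---you have simply unpacked the citation. The key identifications ($\Gamma_{(\tau,\sigma)}=\Gamma_\sigma$ from $\tau\subseteq\sigma$, the index computation $[\Gamma_{y_0}:\Gamma_{z_0}]=|\Gamma_{y_0}|/|\Gamma_{z_0}|$ via finite index and unimodularity, and the finiteness inputs from cocompactness, properness and local finiteness) are all correctly assembled.
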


The reader should note, that from now on we will use the above Proposition to change the order of summation without mentioning it explicitly.

\begin{proposition}
\label{bound on d, calc of d^* prop}
\begin{enumerate}
\item (equivalent to \cite[Proposition 1.5]{BS}) The differential it is a bounded operator and $\Vert d_k \Vert \leq \sqrt{k+2}$.
\item (equivalent to \cite[Proposition 1.6]{BS}) We denote $d_k^* :C^{k+1}(X,\overline{\pi})\rightarrow C^{k}(X,\overline{\pi})$ to be the adjoint operator of $d_k$. Then 
 \[
d_k^* \phi(\tau)=\sum_{ v \in\Sigma(0), v \tau\in \Sigma (k+1)}\frac{m(v\tau)}{m(\tau)}\phi(v\tau),\;\tau\in\Sigma(k)\]
where $v\tau=(v,v_{0},...,v_{k})$ for $\tau=(v_{0},...,v_{k})$.
\end{enumerate}
\end{proposition}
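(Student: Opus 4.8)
The plan is to prove both statements by direct computation, in each case bounding or rewriting a sum over $\Sigma(k+1,\Gamma)$ as a sum over $\Sigma(k,\Gamma)$ by means of the change-of-order Proposition~\ref{changing the order of sum prop}; the only real work is keeping the alternating signs and the combinatorial normalization factors straight.

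For part (1) I would start from $\Vert d_k\phi\Vert^2=\sum_{\sigma\in\Sigma(k+1,\Gamma)}\frac{m(\sigma)}{(k+2)!|\Gamma_\sigma|}|d_k\phi(\sigma)|^2$ and apply the Cauchy--Schwarz inequality (convexity of $t\mapsto t^2$) to get $|d_k\phi(\sigma)|^2=\bigl|\sum_{i=0}^{k+1}(-1)^i\phi(\sigma_i)\bigr|^2\le(k+2)\sum_{i=0}^{k+1}|\phi(\sigma_i)|^2$. Since $\phi$ is alternating, $|\phi(\tau)|$ depends only on the vertex set of $\tau$, so $\sum_{i=0}^{k+1}|\phi(\sigma_i)|^2=\frac1{(k+1)!}\sum_{\tau\in\Sigma(k),\,\tau\subseteq\sigma}|\phi(\tau)|^2$. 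Now $f(\tau,\sigma):=m(\sigma)|\phi(\tau)|^2$ is $\Gamma$-invariant (as $m$ is $\Gamma$-invariant and $\pi$ is isometric), so Proposition~\ref{changing the order of sum prop} applies; combining it with the identity $\sum_{\sigma\in\Sigma(k+1),\,\sigma\supseteq\tau}m(\sigma)=(k+2)!\,m(\tau)$ (which follows from the inductive definition of the weight: each $(k+1)$-simplex $s$ containing $\tau$ has $(k+2)!$ orderings, and $\sum_{s\in X(k+1),\,s\supseteq\{\tau\}}m(s)=m(\{\tau\})$) the constants collapse and one obtains $\Vert d_k\phi\Vert^2\le(k+2)\Vert\phi\Vert^2$.

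For part (2) I would check that the claimed formula gives the adjoint, i.e.\ that $\langle d_k\phi,\psi\rangle=\langle\phi,\chi\rangle$ for all $\phi\in C^k(X,\pi)$, $\psi\in C^{k+1}(X,\overline{\pi})$, where $\chi(\tau):=\sum_{v:\,v\tau\in\Sigma(k+1)}\frac{m(v\tau)}{m(\tau)}\psi(v\tau)$, and that $\chi\in C^k(X,\overline{\pi})$ (both equivariance and the alternating property of $\chi$ are immediate from the corresponding properties of $\psi$ and the $\Gamma$-invariance of $m$). To evaluate $\langle d_k\phi,\psi\rangle=\sum_{\sigma\in\Sigma(k+1,\Gamma)}\frac{m(\sigma)}{(k+2)!|\Gamma_\sigma|}\sum_{i=0}^{k+1}(-1)^i(\phi(\sigma_i),\psi(\sigma))$ via Proposition~\ref{changing the order of sum prop}, the key device is a sign function $\epsilon(\tau,\sigma)$ on set-inclusion pairs $\tau\subseteq\sigma$: namely $\epsilon(\tau,\sigma)=\sgn(\gamma)$ where $\gamma$ is the permutation taking $\sigma$ to the tuple obtained by prepending to $\tau$ the unique vertex of $\sigma$ not in $\tau$. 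Then $f(\tau,\sigma):=m(\sigma)\,\epsilon(\tau,\sigma)\,(\phi(\tau),\psi(\sigma))$ is $\Gamma$-invariant (using $(\pi(g)x,\overline{\pi}(g)y)=(x,y)$), one has $\epsilon(\sigma_i,\sigma)=(-1)^i$, and because the alternating signs of $\phi$ and of $\epsilon$ cancel, all $(k+1)!$ orderings $\tau$ of a fixed facet of $\sigma$ contribute the same value $m(\sigma)(-1)^j(\phi(\sigma_j),\psi(\sigma))$ to $f$; hence $\sum_{i=0}^{k+1}(-1)^i(\phi(\sigma_i),\psi(\sigma))=\frac1{(k+1)!\,m(\sigma)}\sum_{\tau\subseteq\sigma}f(\tau,\sigma)$. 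Applying the change-of-order proposition, and then, for fixed $\tau$, summing over the $(k+2)!$ orderings $\sigma$ of each vertex set $\{v\}\cup\{\tau\}$ --- where $\epsilon(\tau,\sigma)\psi(\sigma)=\psi(v\tau)$ is independent of the ordering --- together with $\sum_{\sigma\supseteq\tau}m(\sigma)=(k+2)!\,m(\tau)$, collapses everything to $\sum_{\tau\in\Sigma(k,\Gamma)}\frac{m(\tau)}{(k+1)!|\Gamma_\tau|}(\phi(\tau),\chi(\tau))=\langle\phi,\chi\rangle$, so $\chi=d_k^*\psi$.

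The part I expect to be the main obstacle is the sign and normalization bookkeeping in (2): pinning down a $\Gamma$-invariant function $f(\tau,\sigma)$ on set-inclusion pairs that simultaneously reproduces the signed boundary sum defining $d_k$ when $\tau=\sigma_i$ and is insensitive to reordering within a fixed vertex set, so that the factorials $(k+1)!$ and $(k+2)!$ match the normalizations appearing in the norm and in the coupling. Once the convention $\epsilon(\tau,\sigma)$ is fixed the rest is routine algebra, and part (1) is the same computation with $|\phi(\tau)|^2$ in place of $\epsilon(\tau,\sigma)(\phi(\tau),\psi(\sigma))$, where signs play no role.
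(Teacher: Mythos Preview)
Your proposal is correct and follows essentially the same route as the paper. For part (1) the paper does exactly your Cauchy--Schwarz step, replaces $\sum_i|\phi(\sigma_i)|^2$ by $\frac{1}{(k+1)!}\sum_{\tau\subseteq\sigma}|\phi(\tau)|^2$, and swaps the order of summation via Proposition~\ref{changing the order of sum prop}; for part (2) the paper introduces the incidence coefficient $[\sigma:\tau]$ (precisely your $\epsilon(\tau,\sigma)$, with the same defining property $[\sigma:\tau]\psi(\tau)=(-1)^i\psi(\sigma_i)$), verifies the $\Gamma$-invariance of the resulting summand, and applies the same change-of-order followed by the same regrouping of the $(k+2)!$ orderings into the terms $\frac{m(v\tau)}{m(\tau)}\phi(v\tau)$.
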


\begin{proof}
\begin{enumerate}
\item For every $\phi \in C^k (X, \pi)$ we have
\begin{dmath*}
 \Vert d_k \phi \Vert^2 = \sum_{\sigma \in \Sigma (k+1, \Gamma)} \dfrac{m(\sigma )}{(k+2)! \vert \Gamma_\sigma \vert} \vert \sum_{i=0}^{k+1}(-1)^{i}\phi(\sigma_{i}) \vert^2 \leq \\
 \sum_{\sigma \in \Sigma (k+1, \Gamma)} \dfrac{m(\sigma )}{(k+2)! \vert \Gamma_\sigma \vert} (k+2) \sum_{i=0}^{k+1} \vert \phi(\sigma_{i}) \vert^2 = \\
 \sum_{\sigma \in \Sigma (k+1, \Gamma)} \dfrac{ m(\sigma )}{(k+1)! (k+1)! \vert \Gamma_\sigma \vert}  \sum_{ \tau \in \Sigma (k), \tau \subset \sigma} \vert \phi(\tau) \vert^2 = \\
 \sum_{\tau \in \Sigma (k, \Gamma)} \dfrac{\vert \phi(\tau) \vert^2 }{(k+1)! (k+1)! \vert \Gamma_\tau \vert}  \sum_{ \sigma \in \Sigma (k+1), \tau \subset \sigma} m(\sigma ) = \\
  \sum_{\tau \in \Sigma (k, \Gamma)} \dfrac{(k+2)! m (\tau ) \vert \phi(\tau) \vert^2 }{ (k+1)!(k+1)! \vert \Gamma_\tau \vert}  = (k+2)  \Vert \phi \Vert^2.
 \end{dmath*}

\item For $\sigma \in \Sigma (k+1)$ and $\tau \subset \sigma, \tau \in \Sigma (k)$ denote by $[\sigma : \tau ]$ the incidence coefficient of $\tau$ with respect to $\sigma$, i.e., if $\sigma_i$ has the same vertices as $\tau$ then for every $\psi \in C^k (X, \pi)$ we have $[ \sigma : \tau ] \psi (\tau ) = (-1)^i \psi (\sigma_i)$. Take $\phi \in C^{k+1} (X, \overline{\pi})$ and $\psi \in C^k (X, \pi)$. We note that for every $\tau \in \Sigma (k)$, every $\sigma \in \Sigma (k+1)$ and every $g \in \Gamma$,
\begin{dmath*}
(\psi (g. \tau), \phi (g. \sigma)) = (\pi (g) \psi (\tau), \overline{\pi} (g) \phi ( \sigma)) = (\psi (\tau), \phi (\sigma)),
\end{dmath*}
and we will use this fact in equality (*) below, in which we apply Proposition \ref{changing the order of sum prop}:
\begin{dmath*}
 \langle d \psi , \phi \rangle = \sum_{\sigma \in \Sigma (k+1, \Gamma)} \dfrac{m(\sigma )}{(k+2)! \vert \Gamma_\sigma \vert} ( \sum_{i=0}^{k+1}(-1)^{i}\psi(\sigma_{i}) , \phi ( \sigma ) ) = 
 {\sum_{\sigma \in \Sigma (k+1, \Gamma)} \dfrac{m(\sigma )}{(k+1)! (k+2)! \vert \Gamma_\sigma \vert} ( \sum_{\tau \in\Sigma(k), \tau \subset \sigma } [\sigma : \tau] \psi(\tau) , \phi ( \sigma ) ) } = 
\sum_{\sigma \in \Sigma (k+1, \Gamma)} \dfrac{m(\tau )}{(k+1)! \vert \Gamma_\sigma \vert} \sum_{\tau \in\Sigma(k), \tau \subset \sigma } (  \psi(\tau) ,\dfrac{[\sigma : \tau] m(\sigma )}{m(\tau ) (k+2)!} \phi ( \sigma ) ) =^{(*)}
 \sum_{\tau \in \Sigma (k, \Gamma)} \dfrac{m(\tau )}{(k+1)! \vert \Gamma_\tau \vert} \sum_{ \sigma \in\Sigma(k+1), \tau \subset \sigma } (  \psi(\tau) ,\dfrac{[\sigma : \tau] m(\sigma )}{m(\tau ) (k+2)!} \phi ( \sigma ) ) = 
 \sum_{\tau \in \Sigma (k, \Gamma)} \dfrac{m(\tau )}{(k+1)! \vert \Gamma_\tau \vert}  ( \psi(\tau) ,\sum_{\sigma \in\Sigma(k+1), \tau \subset \sigma} \dfrac{[\sigma : \tau] m(\sigma )}{m(\tau ) (k+2)!} \phi ( \sigma ) ) = 
\sum_{\tau \in \Sigma (k, \Gamma)} \dfrac{m(\tau )}{(k+1)! \vert \Gamma_\tau \vert}  (  \psi(\tau) ,\sum_{ v \in\Sigma(0), v\tau\in\Sigma(k+1) }\frac{m(v\tau)}{m(\tau)}\phi(v\tau) ).
\end{dmath*}

\end{enumerate}
\end{proof}

We end this section by proving the following criterion for vanishing of cohomology that appeared in a different form in \cite{Nowak} (we claim no originality here):
\begin{lemma}
\label{vanishing of coho lemma - Nowak}
Let $X, \Gamma, \B, \pi$ be as above and $1 \leq k \leq n-1$. If there is a constant $C<1$ such that for every $\phi \in C^k (X,\pi), \psi \in C^k (X,\overline{\pi})$,
$$\left\vert \langle d_k \phi, \overline{d_k} \psi \rangle \right\vert + \left\vert \langle \overline{d_{k-1}}^* \phi, d_{k-1}^* \psi \rangle \right\vert \geq  \vert \langle \phi , \psi \rangle \vert - C (\frac{\Vert \phi \Vert^2 +  \Vert \psi \Vert^2}{2}),$$
then $H^k (X,\pi) = H^k (X,\overline{\pi}) = 0$.  
\end{lemma}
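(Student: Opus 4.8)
The plan is to derive vanishing of $H^k(X,\pi)$ from a Hodge-type decomposition argument combined with the given inequality. Since $C^k(X,\pi)$ is reflexive (Proposition~\ref{C^k is reflexive prop}) and $\overline\pi$ is continuous (Remark~\ref{cont remark}), the adjoints $d_{k-1}^*$, $\overline{d_k}^*$ are well-defined bounded operators. The first step is to show that a cocycle $\phi \in \Ker(d_k)$ can be assumed, without loss of generality, to be orthogonal to $\im(d_{k-1})$ in an appropriate sense; more precisely, I would like to replace an arbitrary cohomology class by a representative $\phi$ satisfying $\overline{d_{k-1}}^*\phi = 0$. In the Hilbert setting this is immediate by orthogonal projection, but in the reflexive Banach setting one argues via a minimization: among all representatives $\phi + d_{k-1}\eta$ of a fixed class, pick one (or a sequence approaching the infimum) of minimal norm, and use reflexivity (weak compactness of bounded sets) to extract a minimizer $\phi_0$; the first-order optimality condition then gives $\langle \phi_0, \overline{d_{k-1}}\psi\rangle = 0$ for all $\psi$, i.e. $\overline{d_{k-1}}^*(J\phi_0) = 0$ after passing to the duality map. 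This is the step I expect to be the main obstacle — making the duality-map bookkeeping precise so that both $\overline{d_{k-1}}^*\phi_0 = 0$ (or its dual analogue) and $d_k\phi_0 = 0$ hold simultaneously, and doing the symmetric thing for $\overline\pi$.

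Granting that, the second step is to feed such a pair into the hypothesis. Suppose $\phi \in \Ker(d_k) \subseteq C^k(X,\pi)$ represents a class and we have arranged $\overline{d_{k-1}}^*\phi = 0$; choose $\psi \in J\phi \subseteq C^k(X,\overline\pi)$ via Proposition~\ref{J prop}, so that $\Vert\psi\Vert^2 = \Vert\phi\Vert^2 = \langle\phi,\psi\rangle$. One checks $\psi$ inherits the corresponding vanishing, namely $d_{k-1}^*\psi = 0$: this should follow because $\psi$ is built pointwise from the duality map applied to $\phi$, and $d_{k-1}^*$ has the explicit averaging form of Proposition~\ref{bound on d, calc of d^* prop}(2), so annihilation of $\phi$ under $\overline{d_{k-1}}^*$ transfers. (Alternatively, run the minimization directly on the $\overline\pi$ side and match up.) Now the left-hand side of the hypothesis inequality vanishes: $d_k\phi = 0$, so $\langle d_k\phi, \overline{d_k}\psi\rangle = 0$; and $\overline{d_{k-1}}^*\phi = 0$, $d_{k-1}^*\psi = 0$, so $\langle \overline{d_{k-1}}^*\phi, d_{k-1}^*\psi\rangle = 0$.

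The hypothesis then forces
\[
0 \;\geq\; \vert\langle\phi,\psi\rangle\vert - C\cdot\frac{\Vert\phi\Vert^2 + \Vert\psi\Vert^2}{2} \;=\; \Vert\phi\Vert^2 - C\Vert\phi\Vert^2 \;=\; (1-C)\Vert\phi\Vert^2,
\]
and since $C < 1$ this gives $\phi = 0$, hence the class is trivial and $H^k(X,\pi) = 0$. The argument for $H^k(X,\overline\pi) = 0$ is identical after swapping the roles of $\pi$ and $\overline\pi$ (using that $\overline\pi$ is again an isometric continuous representation on a reflexive space, and $\overline{\overline\pi} \cong \pi$), using $\overline J$ in place of $J$. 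The one technical point worth spelling out carefully is the reduction to a representative with $\overline{d_{k-1}}^*\phi = 0$: I would phrase it as showing that $\Ker(d_k) = \im(d_{k-1}) \oplus \Ker(d_k)\cap\Ker(\overline{d_{k-1}}^*)$ — or at least that every class has a representative in the second summand — via the norm-minimization/reflexivity argument sketched above, and then observe the hypothesis kills that second summand.
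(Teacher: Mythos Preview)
Your plan has a genuine gap: as written it only proves that the \emph{reduced} cohomology $\overline{H^k}(X,\pi) = \Ker(d_k)/\overline{\im(d_{k-1})}$ vanishes, not that $H^k(X,\pi) = \Ker(d_k)/\im(d_{k-1})$ vanishes. The norm-minimization step is the culprit. You do not know a priori that $\im(d_{k-1})$ is closed, so the coset $\phi + \im(d_{k-1})$ need not be closed, and the reflexivity/weak-compactness argument only produces a minimizer $\phi_0$ in the \emph{closure} $\phi + \overline{\im(d_{k-1})}$. Your argument then correctly forces $\phi_0 = 0$, but this only says $\phi \in \overline{\im(d_{k-1})}$, not $\phi \in \im(d_{k-1})$. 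Closedness of $\im(d_{k-1})$ is exactly the nontrivial content you are trying to establish, so you cannot assume it.

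There is also a bookkeeping error in what the first-order condition gives you. Minimizing $\Vert \phi_0 + d_{k-1}\eta\Vert$ over $\eta \in C^{k-1}(X,\pi)$ yields, via the subdifferential of the norm, the existence of some $\psi_0 \in J\phi_0$ with $\langle d_{k-1}\eta,\psi_0\rangle = 0$ for all $\eta$, i.e.\ $d_{k-1}^*\psi_0 = 0$. It does \emph{not} yield $\overline{d_{k-1}}^*\phi_0 = 0$, and your transfer claim (``annihilation of $\phi$ under $\overline{d_{k-1}}^*$ transfers to $\psi$ through $J$'') is false in general since $J$ is nonlinear. Fortunately $d_{k-1}^*\psi_0 = 0$ alone already kills the second term on the left, so this confusion is not what breaks the argument; the reduced-versus-unreduced issue is.

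The paper avoids this entirely by \emph{not} attempting to kill the second term. For arbitrary $\phi \in \Ker(d_k)$ and $\phi^* \in J\phi$, the hypothesis gives
\[
\vert \langle \overline{d_{k-1}}^*\phi, d_{k-1}^*\phi^*\rangle\vert \;\geq\; (1-C)\Vert\phi\Vert^2,
\]
and combining with $\Vert d_{k-1}^*\phi^*\Vert \leq \sqrt{k+2}\,\Vert\phi\Vert$ (Proposition~\ref{bound on d, calc of d^* prop}) yields the uniform lower bound $\Vert \overline{d_{k-1}}^*\phi\Vert \geq \frac{1-C}{\sqrt{k+2}}\Vert\phi\Vert$ on all of $\Ker(d_k)$. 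One then invokes the closed-range/adjoint-surjectivity package (Theorem~\ref{T,T^* onto fact}) twice, together with reflexivity of the kernels, to conclude that $d_{k-1}':C^{k-1}(X,\pi)\to\Ker(d_k)$ is surjective. This simultaneously establishes that $\im(d_{k-1})$ is closed and equals $\Ker(d_k)$, which is exactly the step your minimization approach cannot reach.
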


Before proving this Lemma, we recall the following facts regarding adjoint operators (for proof of these facts, see for instance \cite[Corollary 1.6.6, Theorem 3.1.22]{IntroToBanachBook}):
\begin{theorem}
\label{T,T^* onto fact}
Let $\B_1, \B_2$ be Banach spaces and $T: \B_1 \rightarrow \B_2$ be a bounded linear operator. Then
\begin{enumerate}
\item The following are equivalent:
\begin{enumerate}
\item $T$ maps $\B_1$ onto $\B_2$.
\item $T^*$ is an isomorphism from $\B_2^*$ onto a subspace of $\B_1^*$.
\item There is a constant $c>0$ such that for every $x\in \B_2^*$, $\Vert T^* x \Vert \geq c \Vert x \Vert$.
\item $T^*$ is injective with a closed image.
\end{enumerate}
\item The following are equivalent:
\begin{enumerate}
\item $T^*$ maps $\B_2^*$ onto $\B_1^*$.
\item $T$ is an isomorphism from $\B_1$ onto a subspace of $\B_2$.
\item There is a constant $c>0$ such that for every $x\in \B_1$, $\Vert T x \Vert \geq c \Vert x \Vert$.
\item $T$ is injective with a closed image.
\end{enumerate} 
\end{enumerate}
\end{theorem}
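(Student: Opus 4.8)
This is a classical pair of facts (the ``closed range''/bounded-below dictionary for an operator and its adjoint), and I would derive it from just two tools: the open mapping theorem for Banach spaces, and the Hahn--Banach theorem, used both in its norming form $\Vert y \Vert = \sup\{\langle y^*, y\rangle : y^* \in \B^*,\ \Vert y^* \Vert \le 1\}$ and in its separation form for closed convex sets. The combinatorial skeleton shared by both parts is the elementary lemma: for a bounded linear operator $S$ between Banach spaces, the three conditions ``$S$ is bounded below'' ($\exists c>0$ with $\Vert Sx\Vert \ge c\Vert x\Vert$ for all $x$), ``$S$ is an isomorphism onto its image'', and ``$S$ is injective with closed image'' are equivalent. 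Indeed, bounded below trivially gives injectivity; it gives closed image since a convergent sequence $(Sx_n)$ is the image of a Cauchy sequence $(x_n)$, which converges by completeness of the domain; and conversely, if $S$ is injective with closed (hence complete) image, then $S$ regarded as a map onto its image is a continuous bijection between Banach spaces, so the open mapping theorem makes its inverse bounded. Applying this lemma to $S = T^*$ yields (1)(b)$\Leftrightarrow$(1)(c)$\Leftrightarrow$(1)(d); applying it to $S = T$ yields (2)(b)$\Leftrightarrow$(2)(c)$\Leftrightarrow$(2)(d). So in each part it only remains to tie this block to the surjectivity condition (a).

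\textbf{Part (1).} For (a)$\Rightarrow$(c): if $T$ is onto, the open mapping theorem gives $r>0$ with $\{y \in \B_2 : \Vert y \Vert < r\} \subseteq T(\{x : \Vert x\Vert \le 1\})$, whence for $x^* \in \B_2^*$
\[
\Vert T^* x^* \Vert = \sup_{\Vert x \Vert \le 1} |\langle x^*, Tx\rangle| \ \ge\ \sup_{\Vert y \Vert < r} |\langle x^*, y\rangle| \ =\ r\Vert x^*\Vert,
\]
so (c) holds with $c=r$. The substantive direction is (c)$\Rightarrow$(a). Assuming $\Vert T^* x^*\Vert \ge c\Vert x^*\Vert$ for all $x^*$, I would first show $\overline{T(B)} \supseteq \{y : \Vert y\Vert < c\}$, where $B$ is the closed unit ball of $\B_1$: if some $y_0$ with $\Vert y_0\Vert < c$ avoided the closed convex balanced set $\overline{T(B)}$, Hahn--Banach separation would produce $x^* \in \B_2^*$ with $|\langle x^*,y_0\rangle| > \sup_{z\in\overline{T(B)}}|\langle x^*, z\rangle| = \sup_{\Vert x\Vert\le 1}|\langle T^*x^*, x\rangle| = \Vert T^*x^*\Vert \ge c\Vert x^*\Vert$, contradicting $|\langle x^*,y_0\rangle|\le \Vert x^*\Vert\,\Vert y_0\Vert < c\Vert x^*\Vert$. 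Then the successive-approximation argument familiar from the proof of the open mapping theorem (using completeness of $\B_1$) upgrades $\overline{T(B)}\supseteq\{y:\Vert y\Vert<c\}$ to the statement that $T(B)$ contains a ball about the origin, so $T$ is surjective by scaling. Combined with the soft block applied to $S=T^*$, this closes the cycle (a)$\Leftrightarrow$(b)$\Leftrightarrow$(c)$\Leftrightarrow$(d).

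\textbf{Part (2).} For (a)$\Rightarrow$(c): if $T^*$ is onto $\B_1^*$, the open mapping theorem applied to $T^*$ gives $r>0$ with $\{x^*:\Vert x^*\Vert<r\}\subseteq T^*(\{y^*:\Vert y^*\Vert\le 1\})$, whence for $x\in\B_1$
\[
\Vert Tx\Vert = \sup_{\Vert y^*\Vert\le 1}|\langle y^*, Tx\rangle| = \sup_{\Vert y^*\Vert\le 1}|\langle T^*y^*, x\rangle| \ \ge\ \sup_{\Vert x^*\Vert<r}|\langle x^*, x\rangle| = r\Vert x\Vert,
\]
giving (c). For the converse (c)/(b)/(d)$\Rightarrow$(a): writing $Y:=T(\B_1)$ (closed, by the soft block) and given $x^*\in\B_1^*$, define a functional on $Y$ by $Tz\mapsto\langle x^*, z\rangle$, which is well defined by injectivity of $T$ and bounded by $c^{-1}\Vert x^*\Vert$ since $\Vert z\Vert\le c^{-1}\Vert Tz\Vert$; extend it to $\tilde y^*\in\B_2^*$ by Hahn--Banach. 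Then $\langle T^*\tilde y^*, z\rangle = \langle\tilde y^*, Tz\rangle = \langle x^*, z\rangle$ for all $z\in\B_1$, i.e. $T^*\tilde y^* = x^*$, so $T^*$ is onto; together with the soft block applied to $S=T$ this closes the cycle.

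\textbf{Main obstacle.} Every implication except (c)$\Rightarrow$(a) in Part (1) is essentially bookkeeping with Hahn--Banach and the open mapping theorem. The genuinely substantive step --- and the only place where a Baire-category/completeness argument is needed in a non-black-box way --- is deducing surjectivity of $T$ from the lower bound on $T^*$: one must first use separation to place an arbitrary small vector inside $\overline{T(B)}$, and then run the iterative correction scheme to pass from the closure to the actual image. I would present that step in full and handle the remaining implications briskly.
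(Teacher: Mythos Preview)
The paper does not prove this theorem at all: it is stated as a recalled fact with the parenthetical ``for proof of these facts, see for instance [Corollary 1.6.6, Theorem 3.1.22]{IntroToBanachBook}'' and then used as a black box in the proof of Lemma~\ref{vanishing of coho lemma - Nowak}. Your argument is correct and is the standard textbook route (open mapping theorem plus Hahn--Banach, with the bounded-below lemma handling the equivalences (b)$\Leftrightarrow$(c)$\Leftrightarrow$(d) in both parts), so there is nothing to compare against --- you have simply supplied the proof that the paper chose to outsource.
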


Using these facts, we can prove the Lemma \ref{vanishing of coho lemma - Nowak}:
\begin{proof}
We will only prove that $H^k (X,\pi) = 0$ - the proof for $H^k (X,\overline{\pi})$ is similar. We denote $d_{k-1} '$ to be the $k-1$ differential with range $\Ker (d_k)$, i.e.,  $d_{k-1} ' : C^{k-1} (X, \pi) \rightarrow \Ker (d_k)$ and we also denote $i: \Ker (d_k) \hookrightarrow  C^{k} (X, \pi)$ to be the natural injection. Therefore $d_{k-1} = i \circ d_{k-1} '$. We similarly denote $\overline{d_{k-1}} ' : C^{k-1} (X, \overline{\pi}) \rightarrow \Ker (\overline{d_k})$ and $\overline{i} :  \Ker (\overline{d_k}) \hookrightarrow  C^{k} (X, \overline{\pi})$ and with this notation $\overline{d_{k-1}} = \overline{i} \circ \overline{d_{k-1}}'$.

By the assumptions of the Lemma, for every $\phi \in \Ker (d_k)$, taking $\psi = \phi^* \in J \phi$ (using Proposition \ref{J prop}), yields that
$$\left\vert \langle \overline{d_{k-1}}^* \phi, d_{k-1}^* \phi^* \rangle \right\vert \geq \vert \langle \phi , \phi^* \rangle \vert - C (\frac{\Vert \phi \Vert^2 +  \Vert \phi^* \Vert^2}{2}) = (1-C) \Vert \phi \Vert^2.$$
We note that by Proposition \ref{bound on d, calc of d^* prop},
\begin{dmath*}
\left\vert \langle \overline{d_{k-1}}^* \phi, d_{k-1}^* \phi^* \rangle \right\vert \leq 
\Vert \overline{d_{k-1}}^* \phi \Vert \Vert d_{k-1}^* \phi^* \Vert \leq 
\Vert \overline{d_{k-1}}^* \phi \Vert \sqrt{k+2} \Vert \phi \Vert.
\end{dmath*}
Thus, for every $\phi \in \Ker (d_k)$,
$$\Vert \overline{d_{k-1}}^* \phi \Vert \geq \frac{1-C}{\sqrt{k+2}} \Vert \phi \Vert.$$
This yields that $\overline{d_{k-1}}^* \circ i$ is injective with a closed image. By the notations above, $(\overline{d_{k-1}}')^* \circ \overline{i}^* \circ i$ is injective with a closed image, and therefore $\overline{i}^* \circ i : \Ker (d_k) \rightarrow (\Ker (\overline{d_k}))^*$ is injective with a closed image. Note that $\Ker (\overline{d_k})$ is a closed subspace of a reflexive space (using Proposition \ref{C^k is reflexive prop}) such and thus $\Ker (\overline{d_k})$ is reflexive and it follows that $(\Ker (\overline{d_k}))^*$ is reflexive as well. Therefore by Theorem \ref{T,T^* onto fact}, $i^* \circ \overline{i} = (\overline{i}^* \circ i)^* : (\Ker (d_k))^* \rightarrow \Ker (\overline{d_k})$ is onto. 

By a similar argument, for a given $\psi \in \Ker (\overline{d_k})$, if we take $\phi = \psi^* \in \overline{J} \psi$, then 
$$\left\vert \langle \overline{d_{k-1}}^* \psi^*, d_{k-1}^* \psi \rangle \right\vert \geq  (1-C) \Vert \psi \Vert^2,$$
which implies that 
$$\Vert d_{k-1}^* \psi \Vert \geq \frac{1-C}{\sqrt{k+2}} \Vert \psi \Vert.$$
Arguing as above, we deduce from this inequality that $(d_{k-1}')^* \circ i^* \circ \overline{i}$ is injective with a closed image. 

We showed above that $i^* \circ \overline{i}$ is onto and therefore it follows that $(d_{k-1}')^* : (\Ker (d_k))^* \rightarrow (C^{k-1} (X, \pi))^*$ is injective with a closed image. Thus applying Theorem \ref{T,T^* onto fact} yields that $d_{k-1}'$ is onto, i.e., $\im (d_{k-1}) = \Ker (d_k)$, or in other words, $H^k (X,\pi) =0$. 
\end{proof}

\begin{remark}
As in \cite{BS}, we can define the Laplacian operators as follows: $\Delta_k^+ = \overline{d_k}^* d_k, \Delta_k^- = d_{k-1} \overline{d_{k-1}}^*$. With these notations, the condition in Lemma \ref{vanishing of coho lemma - Nowak} can be reformulated as follows: there is a constant $C <1$ such that for every $\phi \in C^k (X,\pi), \psi \in C^k (X,\overline{\pi})$,
$$\left\vert \langle \Delta_k^+ \phi, \psi \rangle \right\vert + \left\vert \langle \Delta_k^- \phi, \psi \rangle \right\vert \geq \vert \langle \phi , \psi \rangle \vert - C (\frac{\Vert \phi \Vert^2 +  \Vert \psi \Vert^2}{2}).$$
\end{remark}

\section{Local criteria for vanishing of Banach cohomology}
\label{local criteria sec}

Below, we will prove local criteria for vanishing of equivariant cohomology in the spirit of ``Garland's method''. The method is an adaption of \cite{BS}, but unlike the case of Hilbert spaces, considered in \cite{BS}, in which the condition for vanishing of cohomology requires a (one-sided) spectral gap in the links, here the condition for vanishing of cohomology will require a two-sided spectral gap in the same links.

Let $X, \Gamma, \B, \pi$ as in \cref{equiv coho sec} (recall that we assume that $\pi$ is continuous and $\B$ is reflexive and thus $\overline{\pi}$ is also continuous). Given an ordered simplex $\left(v_{0},...,v_{j}\right)=\tau\in\Sigma(j)$, the link of $\tau$ is simply the link of $\lbrace v_{0},...,v_{j} \rbrace$  defined above. Below, we will only by interested in the $1$-skeleton to the links: given $\tau\in\Sigma(j)$, the \textit{$1$-skeleton of} $X_\tau$ is the weighted graph, denoted $(V_\tau, E_\tau)$, defined as
$$V_\tau = \lbrace v : \lbrace v \rbrace \in X_\tau (0) \rbrace, E_\tau = X_\tau (1),$$
with the weight function $m_\tau (\lbrace u,v \rbrace) =  m (\tau \cup \lbrace u,v \rbrace)$, where $\tau \cup \lbrace u,v \rbrace$ is defined by the abuse of notation of treating $\tau$ as a set (and forgetting the ordering), i.e., $m( (v_{0},...,v_{j}) \cup \lbrace u,v \rbrace) = m(\lbrace v_{0},...,v_{j}, u,v\rbrace)$. Note that with this definition, $m_\tau (v) = m(\tau \cup \lbrace v \rbrace)$. 

On this weighted graph, we define $\ell^2 (V_\tau, m_\tau), \ell^2 (V_\tau, m_\tau ; \B)$ and the operators $A_\tau$, $M_\tau$ as in \cref{rw on graph subsec}. On $\ell^2 (V_\tau, m_\tau ; \B)$ define a norm denoted $\Vert . \Vert_\tau$ as in \cref{equiv coho sec}, i.e., for $\phi \in \ell^2 (V_\tau, m_\tau ; \B)$,
$$\Vert \phi \Vert_\tau = \left( \sum_{v \in V_\tau} m_{\tau} (v) \vert \phi (v) \vert^2 \right)^{\frac{1}{2}},$$
where $\vert . \vert$ is the norm of $\B$. 
Also, define a coupling $\langle .,. \rangle_\tau$ between $\ell^2 (V_\tau, m_\tau ; \B)$ and $\ell^2 (V_\tau, m_\tau ; \B^*)$ as follows: for $\phi \in \ell^2 (V_\tau, m_\tau ; \B), \psi \in \ell^2 (V_\tau, m_\tau ; \B^*)$,
$$\langle \phi, \psi \rangle_\tau =  \sum_{v \in V_\tau} m_{\tau} (v) ( \phi (v), \psi (v)),$$
where $(.,.)$ is the standard coupling between $\B$ and $\B^*$.

Given $\phi \in C^k (X,\pi)$ and $\tau \in \Sigma (k-1)$ we define the \textit{localization of $\phi$ at $X_\tau$}, denoted $\phi_\tau \in \ell^2 (V_\tau, m_\tau ; \B)$, as
$$\phi_\tau (v) = \phi (v \tau), \forall v \in V_\tau,$$
where $v \tau$ is the concatenation of $v$ with $\tau$, i.e., for $\tau = (v_0,...,v_{k-1})$, $v \tau = (v, v_0,...,v_{k-1})$. We note that by the definition of $X_\tau$, $v \tau \in \Sigma (k)$ and therefore $\phi (v \tau)$ is well-defined.

The basic observation of Garland in \cite{Gar} was that the norm of cochains can be computed by considering their localizations. Below, we generalize this observation to the Banach setting. The calculations below are very similar to those of \cite{BS}, but we included all the calculations, because we need localization results not only for the norms, but for the couplings.

\begin{lemma}
\label{inner-prod and norm calc lemma}
Let $1 \leq k \leq n-1$, $\phi \in C^k (X, \pi), \psi \in C^k (X, \overline{\pi})$. Then
$$(k+1)! \langle \phi , \psi \rangle = \sum_{\tau \in \Sigma (k-1,\Gamma)} \frac{1}{\vert \Gamma_\tau \vert} \langle \phi_\tau , \psi_\tau \rangle_\tau,$$
$$(k+1)! \Vert \phi \Vert^2 = \sum_{\tau \in \Sigma (k-1,\Gamma)} \frac{1}{\vert \Gamma_\tau \vert} \Vert \phi_\tau \Vert_\tau^2,$$
and
$$(k+1)! \Vert \psi \Vert^2 = \sum_{\tau \in \Sigma (k-1,\Gamma)} \frac{1}{\vert \Gamma_\tau \vert} \Vert \psi_\tau \Vert_\tau^2.$$
\end{lemma}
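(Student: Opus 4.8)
The plan is to prove the three identities by a direct change-of-summation argument, exactly in the spirit of Garland's original localization and of \cite[Section 1]{BS}, using Proposition \ref{changing the order of sum prop} as the main combinatorial tool. I would treat the coupling identity $(k+1)!\langle\phi,\psi\rangle = \sum_{\tau\in\Sigma(k-1,\Gamma)}\frac{1}{|\Gamma_\tau|}\langle\phi_\tau,\psi_\tau\rangle_\tau$ first, and then observe that the two norm identities follow from it: taking $\psi = \phi^*$ (or any element of $J\phi$) does not quite work since the right-hand side couplings involve $\phi_\tau$ and $\psi_\tau$ componentwise, so instead I would simply rerun the same computation with $(\phi(\sigma),\psi(\sigma))$ replaced by $|\phi(\sigma)|^2$ and by $|\psi(\sigma)|^2$ — the combinatorics is identical and only the ``inner object'' changes, so all three are really one computation.

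For the coupling identity, the key steps are as follows. First, unfold the left-hand side using the definition of $\langle\cdot,\cdot\rangle$, namely $\langle\phi,\psi\rangle = \sum_{\sigma\in\Sigma(k,\Gamma)}\frac{m(\sigma)}{(k+1)!|\Gamma_\sigma|}(\phi(\sigma),\psi(\sigma))$. Second, for each $\sigma\in\Sigma(k)$ I would like to index $\sigma$ by a pair $(\tau, v)$ where $\tau\in\Sigma(k-1)$, $\tau\subseteq\sigma$, and $v$ is the remaining vertex, so that $\sigma = v\tau$ up to reordering. Since each unordered $k$-simplex underlying $\sigma$ has $k+1$ choices of the ``extra'' vertex, and since $\phi$ and $\psi$ are alternating, the natural move is to pass to ordered simplices of the form $v\tau$ and note that $(\phi(v\tau),\psi(v\tau))$ is independent of the ordering within $\tau$ (both $\phi$ and $\psi$ pick up the same sign $\sgn(\gamma)$ under a permutation of $\tau$, and these signs cancel in the pairing). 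Concretely I would write, for $\sigma\in\Sigma(k-1)$ fixed as a set and reconstituted with its extra vertex, an identity of the form
\[
\sum_{\sigma\in\Sigma(k,\Gamma)}\frac{m(\sigma)}{(k+1)!|\Gamma_\sigma|}(\phi(\sigma),\psi(\sigma)) = \frac{1}{(k+1)!}\sum_{\tau\in\Sigma(k-1,\Gamma)}\frac{1}{|\Gamma_\tau|}\sum_{\substack{v\in\Sigma(0)\\ v\tau\in\Sigma(k)}}m(v\tau)(\phi(v\tau),\psi(v\tau)),
\]
where the passage from a sum over $\Sigma(k,\Gamma)$ to the double sum over $\tau\in\Sigma(k-1,\Gamma)$ and its cofaces $v\tau$ is precisely an application of Proposition \ref{changing the order of sum prop} with $l = k-1$, once one checks that $f(\tau,\sigma) = m(\sigma)(\phi(\sigma),\psi(\sigma))$ (with the appropriate incidence bookkeeping) is $\Gamma$-invariant; the $\Gamma$-invariance is the computation $(\phi(g.\sigma),\psi(g.\sigma)) = (\pi(g)\phi(\sigma),\overline{\pi}(g)\psi(\sigma)) = (\phi(\sigma),\psi(\sigma))$ already recorded in the proof of Proposition \ref{bound on d, calc of d^* prop}, combined with $\Gamma$-invariance of $m$. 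Finally I recognize the inner sum $\sum_{v:\,v\tau\in\Sigma(k)} m(v\tau)(\phi(v\tau),\psi(v\tau))$ as exactly $\sum_{v\in V_\tau}m_\tau(v)(\phi_\tau(v),\psi_\tau(v)) = \langle\phi_\tau,\psi_\tau\rangle_\tau$, using $m_\tau(v) = m(\tau\cup\{v\})$ and $\phi_\tau(v) = \phi(v\tau)$, $\psi_\tau(v) = \psi(v\tau)$; multiplying through by $(k+1)!$ gives the claim.

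The step I expect to be the main obstacle — really the only place requiring care — is the bookkeeping of the combinatorial factor $(k+1)!$ and the matching of stabilizer measures $|\Gamma_\sigma|$ versus $|\Gamma_\tau|$ when passing between ordered and unordered simplices. One has to be careful that, in the sum defining $\langle\phi,\psi\rangle$, the representatives $\Sigma(k,\Gamma)$ are a transversal for the $\Gamma$-action on \emph{ordered} $k$-simplices (not unordered ones), so that each unordered $k$-simplex in a $\Gamma$-orbit is in principle represented by up to $(k+1)!$ ordered simplices, but these are partly absorbed into $|\Gamma_\sigma|$ and partly into the explicit $(k+1)!$ in the weight; the localization $\phi_\tau$ lives over $V_\tau$ which sees each coface $v\tau$ once per choice of extra vertex. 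I would resolve this exactly as in \cite[Proof of Proposition 1.5]{BS}: keep all simplices ordered throughout, apply Proposition \ref{changing the order of sum prop} verbatim (it already encodes the correct $|\Gamma_\sigma| \leftrightarrow |\Gamma_\tau|$ conversion), and only at the very end collapse the ordering on $\tau$ — the alternating property guarantees each of the $k!$ orderings of a fixed $\tau$ contributes an identical term, and this $k!$ together with the remaining factor is what produces the stated $(k+1)!$. Since the paper has announced that it will suppress explicit invocations of Proposition \ref{changing the order of sum prop}, in the write-up I would simply display the chain of equalities with a one-line note that $\Gamma$-invariance of the relevant function has been checked, exactly parallel to the displayed computation in Proposition \ref{bound on d, calc of d^* prop}(2), and then remark that the two norm identities follow by the identical computation with $|\phi(\sigma)|^2$ and $|\psi(\sigma)|^2$ in place of the pairing.
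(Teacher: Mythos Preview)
Your proposal is correct and essentially identical to the paper's proof: both use Proposition~\ref{changing the order of sum prop} to exchange the sums over $\Sigma(k,\Gamma)$ and $\Sigma(k-1,\Gamma)$, rely on the same $\Gamma$-invariance check $(\phi(g.\sigma),\psi(g.\sigma))=(\phi(\sigma),\psi(\sigma))$, and produce the factor $(k+1)!$ by counting ordered $(k-1)$-faces of an ordered $k$-simplex. The only cosmetic differences are that the paper runs the chain of equalities from the localized side toward $\langle\phi,\psi\rangle$ rather than the reverse, and that it simply declares the two norm identities follow by the same argument (as you also propose); incidentally, the $J\phi$ route you dismiss does in fact work, since for $\psi\in J\phi$ one has $\psi(v\tau)\in j(\phi(v\tau))$ pointwise and hence $\langle\phi_\tau,\psi_\tau\rangle_\tau=\Vert\phi_\tau\Vert_\tau^2$, but your direct substitution is just as good.
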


\begin{proof}
All these equalities follow from the definition of the localization and Proposition \ref{changing the order of sum prop} and thus we will only prove the first equality, leaving the other two for the reader. Fix  $\phi \in C^k (X, \pi), \psi \in C^k (X, \overline{\pi})$, then 
\begin{dmath*}
\sum_{\tau \in \Sigma (k-1,\Gamma)} \frac{1}{\vert \Gamma_\tau \vert} \langle \phi_\tau , \psi_\tau \rangle_\tau = 
\sum_{\tau \in \Sigma (k-1,\Gamma)} \frac{1}{\vert \Gamma_\tau \vert} \sum_{v \in V_\tau} m_\tau (v) ( \phi_\tau (v), \psi_\tau (v) ) = \\
\sum_{\tau \in \Sigma (k-1,\Gamma)} \frac{1}{\vert \Gamma_\tau \vert} \sum_{v \in V_\tau} m (v \tau ) ( \phi (v \tau ), \psi (v \tau) ) = \\
\sum_{\tau \in \Sigma (k-1,\Gamma)} \frac{1}{\vert \Gamma_\tau \vert} \frac{1}{(k+1)!} \sum_{\sigma \in \Sigma (k), \tau \subseteq \sigma} m (\sigma) ( \phi (\sigma), \psi (\sigma) ) =  \\
\sum_{\sigma \in \Sigma (k,\Gamma)}   \frac{m(\sigma)}{(k+1)! \vert \Gamma_\sigma \vert} \sum_{\tau \in \Sigma (k-1,\Gamma)} ( \phi (\sigma), \psi (\sigma) ) \sum_{\tau \in \Sigma (k-1), \tau \subseteq \sigma} 1 = 
(k+1)! \langle \phi , \psi \rangle.
\end{dmath*}
\end{proof}

\begin{lemma}
\label{d* inner-prod calc lemma}
Let $1 \leq k \leq n-1$, $\phi \in C^k (X, \pi), \psi \in C^k (X, \overline{\pi})$. Then
$$ \langle \overline{d_{k-1}}^* \phi, d_{k-1}^* \psi \rangle =  \frac{1}{k!} \sum_{\tau \in \Sigma (k-1,\Gamma)} \frac{1}{\vert \Gamma_\tau \vert} \langle (M_\tau \otimes \id_{\B}) \phi_\tau, \psi_\tau \rangle_\tau.$$
\end{lemma}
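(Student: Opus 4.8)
The plan is to unwind both sides explicitly, using the formula for the codifferential from Proposition~\ref{bound on d, calc of d^* prop}(2) together with the weight identities $m(v\tau) = m_\tau(v)$ and $m(\tau) = m_\tau(\emptyset) = \sum_{v \in V_\tau} m_\tau(v)$ recorded in the discussion of weighted simplicial complexes. By the symmetry between $\pi$ and $\overline{\pi}$ (recall $\overline{\overline{\pi}} = \pi$ since $\B$ is reflexive), Proposition~\ref{bound on d, calc of d^* prop}(2) applies verbatim to both $d_{k-1}^*$ and $\overline{d_{k-1}}^*$.

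First I would observe that for $\tau \in \Sigma(k-1)$ and $\phi \in C^k(X,\pi)$,
$$\overline{d_{k-1}}^* \phi(\tau) = \sum_{v \in V_\tau}\frac{m(v\tau)}{m(\tau)}\phi(v\tau) = \sum_{v \in V_\tau}\frac{m_\tau(v)}{m_\tau(\emptyset)}\phi_\tau(v),$$
and the analogous identity holds for $d_{k-1}^* \psi(\tau)$ with $\psi_\tau \in \ell^2(V_\tau, m_\tau ; \B^*)$. The right-hand side is precisely the common value of the constant function $M_\tau \phi_\tau$; in other words, $\overline{d_{k-1}}^*\phi(\tau)$ is the value of $(M_\tau \otimes \id_{\B})\phi_\tau$, and likewise $d_{k-1}^*\psi(\tau)$ is the value of $(M_\tau \otimes \id_{\B^*})\psi_\tau$.

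Next I would compute the local coupling. Writing $c_\phi := \overline{d_{k-1}}^*\phi(\tau)$ and $c_\psi := d_{k-1}^*\psi(\tau)$, the function $(M_\tau \otimes \id_{\B})\phi_\tau$ is constant equal to $c_\phi$, so
$$\langle (M_\tau \otimes \id_{\B})\phi_\tau, \psi_\tau \rangle_\tau = \sum_{v \in V_\tau} m_\tau(v)\,(c_\phi, \psi_\tau(v)) = \left(c_\phi,\ \sum_{v \in V_\tau} m_\tau(v)\psi_\tau(v)\right) = m_\tau(\emptyset)\,(c_\phi, c_\psi),$$
where in the last step I used $\sum_{v} m_\tau(v)\psi_\tau(v) = m_\tau(\emptyset)\,c_\psi$, which is just the definition of $c_\psi = (M_\tau\otimes\id_{\B^*})\psi_\tau$ multiplied by $m_\tau(\emptyset)$. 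Since $m_\tau(\emptyset) = m(\tau)$, this gives $\langle (M_\tau \otimes \id_{\B})\phi_\tau, \psi_\tau \rangle_\tau = m(\tau)\,(\overline{d_{k-1}}^*\phi(\tau),\, d_{k-1}^*\psi(\tau))$.

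Finally I would substitute this into the definition of the coupling on $C^{k-1}$, namely $\langle \alpha, \beta \rangle = \sum_{\tau \in \Sigma(k-1,\Gamma)} \frac{m(\tau)}{k!\,|\Gamma_\tau|}(\alpha(\tau),\beta(\tau))$, applied to $\alpha = \overline{d_{k-1}}^*\phi$ and $\beta = d_{k-1}^*\psi$: the factor $m(\tau)$ cancels against the $1/m(\tau)$ coming from the previous step, leaving $\frac{1}{k!}\sum_{\tau \in \Sigma(k-1,\Gamma)} \frac{1}{|\Gamma_\tau|}\langle (M_\tau \otimes \id_{\B})\phi_\tau, \psi_\tau \rangle_\tau$, which is the asserted identity. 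The computation is entirely bookkeeping; the only point requiring attention is the identification in the first step --- that the codifferential evaluated at $\tau$ is exactly the constant value of $M_\tau$ applied to the localization --- and keeping the weights $m(v\tau) = m_\tau(v)$, $m(\tau) = m_\tau(\emptyset)$ and the combinatorial factor $k!$ straight. There is no genuine analytic obstacle.
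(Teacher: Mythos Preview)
Your proof is correct and follows essentially the same route as the paper: identify $\overline{d_{k-1}}^*\phi(\tau)$ and $d_{k-1}^*\psi(\tau)$ with the constant values of $(M_\tau\otimes\id_\B)\phi_\tau$ and $(M_\tau\otimes\id_{\B^*})\psi_\tau$ via Proposition~\ref{bound on d, calc of d^* prop}(2), compute the local coupling as $m(\tau)\,(\overline{d_{k-1}}^*\phi(\tau),\,d_{k-1}^*\psi(\tau))$, and then sum using the definition of the global coupling on $C^{k-1}$. The bookkeeping with $m_\tau(\emptyset)=m(\tau)$ and the factor $k!$ is handled just as in the paper.
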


\begin{proof}
By Proposition \ref{bound on d, calc of d^* prop}, for every $\tau \in \Sigma (k-1)$,
$$ \overline{d_k}^* \phi(\tau)= \sum_{ v \in V_\tau}\frac{m(v\tau)}{m(\tau)}\phi(v\tau), d_k^* \psi(\tau)=\sum_{ v \in V_\tau}\frac{m(v\tau)}{m(\tau)}\psi(v\tau).$$
We note that by definition $m_\tau (\emptyset) = m(\tau)$ and therefore for every $\tau \in \Sigma (k-1)$,
\begin{dmath*}
\langle (M_\tau \otimes \id_{\B}) \phi_\tau, \psi_\tau \rangle_\tau =
\sum_{v \in V_\tau} m_\tau (v) (\sum_{u \in V_\tau} \frac{m_\tau (u)}{m_\tau (\emptyset)} \phi_\tau (u), \psi_\tau (v) ) =
\sum_{v \in V_\tau} m (v \tau) (\sum_{u \in V_\tau} \frac{m (u \tau)}{m (\tau)} \phi (u \tau), \psi  (v\tau) ) =
 (\sum_{u \in V_\tau} \frac{m (u \tau)}{m (\tau)} \phi (u \tau), \sum_{v \in V_\tau} m (v \tau) \psi  (v\tau) ) =
m(\tau) (\overline{d_k}^* \phi(\tau), d_k^* \psi(\tau)).
\end{dmath*}
Therefore
\begin{dmath*}
\sum_{\tau \in \Sigma (k-1,\Gamma)} \frac{1}{\vert \Gamma_\tau \vert} \langle (M_\tau \otimes \id_{\B}) \phi_\tau, \psi_\tau \rangle_\tau = \\
\sum_{\tau \in \Sigma (k-1,\Gamma)} \frac{m(\tau )}{\vert \Gamma_\tau \vert} (\overline{d_k}^* \phi(\tau), d_k^* \psi(\tau)) =
k!  \langle \overline{d_{k-1}}^* \phi, d_{k-1}^* \psi \rangle.
\end{dmath*}
\end{proof}

\begin{lemma}
\label{d inner-prod calc lemma}
Let $1 \leq k \leq n-1$, $\phi \in C^k (X, \pi), \psi \in C^k (X, \overline{\pi})$. Then
$$\langle d_k \phi, \overline{d_{k}} \psi \rangle = \langle \phi , \psi \rangle -  \frac{1}{k!} \sum_{\tau \in \Sigma (k-1,\Gamma)} \frac{1}{\vert \Gamma_\tau \vert} \langle (A_\tau \otimes \id_{\B}) \phi_\tau, \psi_\tau \rangle_\tau.$$
\end{lemma}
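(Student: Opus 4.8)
The plan is to expand $\langle d_k \phi, \overline{d_{k}} \psi \rangle$ directly from the definition of the differential and split the resulting double sum into a ``diagonal'' and an ``off-diagonal'' part. Writing $\sigma = (v_0,\dots,v_{k+1})$ for $\sigma \in \Sigma(k+1)$, we have
$$\langle d_k \phi, \overline{d_{k}} \psi \rangle = \sum_{\sigma \in \Sigma(k+1,\Gamma)} \frac{m(\sigma)}{(k+2)!\, |\Gamma_\sigma|} \sum_{i,j=0}^{k+1} (-1)^{i+j} (\phi(\sigma_i), \psi(\sigma_j)),$$
and I would treat the terms with $i=j$ and the terms with $i \neq j$ separately, showing that the former contribute $\langle \phi, \psi \rangle$ and the latter contribute $-\frac{1}{k!}\sum_{\tau \in \Sigma(k-1,\Gamma)}\frac{1}{|\Gamma_\tau|}\langle (A_\tau \otimes \id_{\B})\phi_\tau, \psi_\tau\rangle_\tau$.

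For the diagonal terms, since $\phi$ and $\psi$ are both alternating, $(\phi(\sigma_i),\psi(\sigma_i))$ depends only on the underlying unordered $k$-face of $\sigma_i$, so $\sum_{i=0}^{k+1}(\phi(\sigma_i),\psi(\sigma_i)) = \frac{1}{(k+1)!}\sum_{\tau\in\Sigma(k),\, \tau\subseteq\sigma}(\phi(\tau),\psi(\tau))$. Substituting this and applying Proposition \ref{changing the order of sum prop} (in the case where its pair of dimensions is $(k,k+1)$) together with the identity $\sum_{\sigma\in\Sigma(k+1),\, \tau\subseteq\sigma}m(\sigma) = (k+2)!\,m(\tau)$ for $\tau\in\Sigma(k)$ (an immediate consequence of the definition of the weight function $m$ and the fact that each unordered $(k+1)$-simplex has $(k+2)!$ orderings), the diagonal part collapses to $\sum_{\tau\in\Sigma(k,\Gamma)}\frac{m(\tau)}{(k+1)!\,|\Gamma_\tau|}(\phi(\tau),\psi(\tau)) = \langle\phi,\psi\rangle$.

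For the off-diagonal terms, note that for $i\neq j$ the set $\sigma_i\cap\sigma_j$ is a $(k-1)$-face $\tau$ of $\sigma$, and the two vertices $u,v$ of $\sigma$ not lying in $\tau$ span an edge of the link $X_\tau$, since $\tau\cup\{u,v\}$ is a face of $X$. The key computation is the sign bookkeeping: transporting the extra vertex from the front of $v\tau$ into the slot it occupies inside $\sigma_i$ (and likewise for $u\tau$ inside $\sigma_j$) produces signs which, when combined with $(-1)^{i+j}$, collapse to an overall factor $-1$; hence the contribution of the unordered pair $\{i,j\}$ is $-\bigl[(\phi_\tau(v),\psi_\tau(u)) + (\phi_\tau(u),\psi_\tau(v))\bigr]$, a quantity that does not depend on the chosen ordering of $\tau$. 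I would then repackage this as a $\Gamma$-invariant function $f(\tau,\sigma)$ on pairs $\tau\subseteq\sigma$ with $\tau\in\Sigma(k-1)$, $\sigma\in\Sigma(k+1)$ --- invariance following from equivariance of $\phi$ and $\psi$ together with the definition of $\overline{\pi}$ --- apply Proposition \ref{changing the order of sum prop} again (now with dimensions $(k-1,k+1)$), and use $m_\tau(\{u,v\}) = m(\tau\cup\{u,v\})$ to recognize the inner sum $\sum_{\sigma\in\Sigma(k+1),\, \tau\subseteq\sigma}f(\tau,\sigma)$ as exactly $\langle(A_\tau\otimes\id_{\B})\phi_\tau,\psi_\tau\rangle_\tau$. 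Adding the diagonal and off-diagonal contributions then gives the asserted identity.

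The step I expect to be the main obstacle is the bookkeeping in the off-diagonal computation: keeping the combinatorial multiplicities consistent when converting between ordered and unordered simplices (the $(k+1)!$ orderings of a $k$-face, the $k!$ orderings of a $(k-1)$-face, the $(k+2)!$ orderings of a $(k+1)$-face), and --- most delicately --- pinning down the sign so that one recovers the random walk operator $A_\tau$ itself rather than its negative or a reflected variant. Once the sign and the counting factors are settled, the two invocations of Proposition \ref{changing the order of sum prop} are routine, and the overall structure parallels the Hilbert-space computation in \cite{BS}.
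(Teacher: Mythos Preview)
Your proposal is correct and follows essentially the same route as the paper: expand the pairing, split into the diagonal ($i=j$) and off-diagonal ($i\neq j$) sums, use the alternating property plus Proposition~\ref{changing the order of sum prop} to reduce the diagonal part to $\langle\phi,\psi\rangle$, and handle the off-diagonal part by the sign identity $(\phi(\eta_i),\psi(\eta_j)) = (-1)^{i+j-1}(\phi(v_j\eta_{i,j}),\psi(v_i\eta_{i,j}))$ together with a second application of Proposition~\ref{changing the order of sum prop} to recover $\langle (A_\tau\otimes\id_\B)\phi_\tau,\psi_\tau\rangle_\tau$. The obstacles you flag (the sign and the $k!$, $(k+1)!$, $(k+2)!$ factors) are exactly the places where the paper pauses, so your plan matches the paper's proof.
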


\begin{proof}
For $\eta = (v_0,...,v_{k+1}) \in \Sigma (k+1)$ and $0 \leq i \neq j \leq k+1$, denote  $\eta_i = (v_0,...,\widehat{v_i},...,v_{k+1})$ and $\eta_{i,j} = (v_0,...,\widehat{v_i},...,\widehat{v_j},...,v_{k+1})$. Then
\begin{dmath*}
(d_k \phi (\eta), \overline{d_{k}} \psi (\eta)) = (\sum_{i=0}^{k+1} (-1)^i \phi (\eta_i), \sum_{j=0}^{k+1} (-1)^j \psi (\eta_j)) =
\sum_{i=0}^{k+1} (\phi (\eta_i), \psi (\eta_i)) + \sum_{0 \leq i \neq j \leq k+1} (-1)^{i+j} ( \phi (\eta_i), \psi (\eta_j)).
\end{dmath*}
We note that by the assumption that $\phi, \psi$ are alternating, changing the order of $\eta_i$ in the first sum above does not change the coupling and therefore
$$\sum_{i=0}^{k+1} (\phi (\eta_i), \psi (\eta_i)) = \frac{1}{(k+1)!} \sum_{\sigma \in \Sigma (k), \sigma \subseteq \eta} (\phi (\sigma), \psi (\sigma)).$$
We also not that for every $i \neq j$, 
$$(\phi (\eta_i), \psi (\eta_j)) = (-1)^{i+j-1} (\phi (v_j \eta_{i,j}), \psi (v_i \eta_{i,j})),$$
(this can be shown by considering the cases $i<j$ and $j<i$ - we leave the proof for the reader). Therefore
\begin{dmath*}
\sum_{0 \leq i \neq j \leq k+1} (-1)^{i+j} ( \phi (\eta_i), \psi (\eta_j)) = 
- \sum_{0 \leq i \neq j \leq k+1} ( \phi (v_j \eta_{i,j}), \psi (v_i \eta_{i,j})) = 
\frac{1}{k!} \sum_{\tau \in \Sigma (k-1), \tau \subseteq \eta} \sum_{v, v\tau \subseteq \eta} (\sum_{u, u \neq v, uv\tau \subseteq \eta} \phi (u \tau), \psi (v \tau)),
\end{dmath*}
where $uv \tau$ is the concatenation, i.e., if $\tau = (v_0,...,v_{k-1})$, \\ $uv \tau = (u,v,v_0,...,v_{k-1})$ (we recall that $uv\tau \subseteq \eta$ refers only to inclusion as sets without regarding the ordering). This yields that 
\begin{dmath}
\label{general eq}
\langle d_k \phi, \overline{d_{k}} \psi \rangle = 
\sum_{\eta \in \Sigma (k+1, \Gamma)} \frac{m(\eta)}{(k+2)! \vert \Gamma_\eta \vert} \frac{1}{(k+1)!} \sum_{\sigma \in \Sigma (k), \sigma \subseteq \eta} (\phi (\sigma), \psi (\sigma)) - \sum_{\eta \in \Sigma (k+1, \Gamma)} \frac{m(\eta)}{(k+2)! \vert \Gamma_\eta \vert} \frac{1}{k!} \sum_{\tau \in \Sigma (k-1), \tau \subseteq \eta} \sum_{v, v\tau \subseteq \eta} (\sum_{u, u \neq v, uv\tau \subseteq \eta} \phi (u \tau), \psi (v \tau)).
\end{dmath}
We will calculate each one of the expressions above separately. First, by applying Proposition \ref{changing the order of sum prop},
\begin{dmath}
\label{first}
\sum_{\eta \in \Sigma (k+1, \Gamma)} \frac{m(\eta)}{(k+2)! \vert \Gamma_\eta \vert} \frac{1}{(k+1)!} \sum_{\sigma \in \Sigma (k), \sigma \subseteq \eta} (\phi (\sigma), \psi (\sigma)) = \\
\sum_{\sigma \in \Sigma (k, \Gamma)} \frac{1}{(k+1)! \vert \Gamma_\sigma \vert} (\phi (\sigma), \psi (\sigma)) \sum_{\sigma \in \Sigma (k+1), \sigma \subseteq \eta} \frac{m(\eta)}{(k+2)!} = \\
\sum_{\sigma \in \Sigma (k, \Gamma)} \frac{m(\sigma)}{(k+1)! \vert \Gamma_\sigma \vert} (\phi (\sigma), \psi (\sigma)) =
\langle \phi, \psi \rangle.
\end{dmath}
Second, applying Proposition \ref{changing the order of sum prop} to the second expression,
\begin{dmath}
\label{second}
\sum_{\eta \in \Sigma (k+1, \Gamma)} \frac{m(\eta)}{(k+2)! \vert \Gamma_\eta \vert} \frac{1}{k!} \sum_{\tau \in \Sigma (k-1), \tau \subseteq \eta} \sum_{v, v\tau \subseteq \eta} (\sum_{u, u \neq v, uv\tau \subseteq \eta} \phi (u \tau), \psi (v \tau)) = \\
\frac{1}{k!} \sum_{\tau \in \Sigma (k-1, \Gamma)} \frac{1}{ \vert \Gamma_\tau \vert} \sum_{\eta \in \Sigma (k+1), \tau \subseteq \eta} \frac{m(\eta)}{(k+2)!} \sum_{v, v\tau \subseteq \eta} (\sum_{u, u \neq v, uv\tau \subseteq \eta} \phi (u \tau), \psi (v \tau)) = \\
\frac{1}{k!} \sum_{\tau \in \Sigma (k-1, \Gamma)} \frac{1}{ \vert \Gamma_\tau \vert} \sum_{\lbrace v,u \rbrace \in E_{\tau}} m_\tau (\lbrace v,u \rbrace) \sum_{v \in \lbrace v,u \rbrace} (\sum_{u \in \lbrace v,u \rbrace, u \neq v} \phi_\tau (u), \psi_\tau (v)) = \\
\frac{1}{k!} \sum_{\tau \in \Sigma (k-1, \Gamma)} \frac{1}{ \vert \Gamma_\tau \vert} \sum_{v \in V_\tau}  (\sum_{u \in \lbrace v,u \rbrace, u \neq v}  m_\tau (\lbrace v,u \rbrace) \phi_\tau (u), \psi_\tau (v)) = \\
\frac{1}{k!} \sum_{\tau \in \Sigma (k-1, \Gamma)} \frac{1}{ \vert \Gamma_\tau \vert} \sum_{v \in V_\tau} m_\tau (v) ( \sum_{u \in \lbrace v,u \rbrace, u \neq v}  \frac{m_\tau (\lbrace v,u \rbrace)}{m_\tau (v)} \phi_\tau (u), \psi_\tau (v)) = \\
\frac{1}{k!} \sum_{\tau \in \Sigma (k-1, \Gamma)} \frac{1}{ \vert \Gamma_\tau \vert} \langle (A_\tau \otimes \id_{\B}) \phi_\tau, \psi_\tau \rangle_{\tau}.
\end{dmath}
Combining \eqref{general eq}, \eqref{first}, \eqref{second} yields the needed equality.
\end{proof}

After these lemmata, we can prove a local criterion for cohomology vanishing that appeared as Theorem \ref{local criterion thm - intro} in the introduction:
\begin{theorem}
\label{Local criterion thm}
Let $X$ be a locally finite, pure $n$-dimensional simplicial complex with the weight function $m$ defined above and $\Gamma$ be a locally compact, unimodular group acting cocompactly and properly on $X$. For every reflexive Banach space $\B$ and every $1 \leq k \leq n-1$, if  
$$\max_{\tau \in \Sigma (k-1, \Gamma)} \Vert (A_\tau (I-M_\tau) \otimes \id_{\B}) \Vert_{B (\ell^2 (V_\tau,m_\tau ; \B))} < \frac{1}{k+1},$$
then for every continuous isometric representation $\pi$ of $\Gamma$ on $\B$ it holds that $H^k (X, \pi) = 0$. 
\end{theorem}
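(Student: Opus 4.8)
The plan is to deduce the statement from the Nowak-type criterion of Lemma \ref{vanishing of coho lemma - Nowak}: it suffices to produce a constant $C<1$ such that
$$\left\vert \langle d_k \phi, \overline{d_k} \psi \rangle \right\vert + \left\vert \langle \overline{d_{k-1}}^* \phi, d_{k-1}^* \psi \rangle \right\vert \geq \vert \langle \phi , \psi \rangle \vert - C \frac{\Vert \phi \Vert^2 + \Vert \psi \Vert^2}{2}$$
for all $\phi \in C^k(X,\pi)$, $\psi \in C^k(X,\overline{\pi})$, and the localization identities of Lemmas \ref{inner-prod and norm calc lemma}, \ref{d* inner-prod calc lemma} and \ref{d inner-prod calc lemma} are exactly what is needed to check this. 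So I would fix $\phi,\psi$ and first assemble those three identities into a single local formula.

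Adding Lemma \ref{d inner-prod calc lemma} and Lemma \ref{d* inner-prod calc lemma}, the global term $\langle \phi, \psi\rangle$ appears with a plus sign on both sides and cancels, leaving
$$\langle \phi, \psi\rangle - \langle d_k \phi, \overline{d_k} \psi\rangle - \langle \overline{d_{k-1}}^* \phi, d_{k-1}^* \psi\rangle = \frac{1}{k!}\sum_{\tau\in\Sigma(k-1,\Gamma)}\frac{1}{\vert\Gamma_\tau\vert}\big\langle ((A_\tau - M_\tau)\otimes \id_\B)\phi_\tau,\psi_\tau\big\rangle_\tau .$$
Here I would invoke the fact (recorded in \cref{rw on graph subsec}) that $M_\tau$ is the projection onto the constants, which form the eigenspace of $A_\tau$ with eigenvalue $1$, so $A_\tau M_\tau = M_\tau$ and hence $A_\tau - M_\tau = A_\tau(I-M_\tau)$ as operators on $\ell^2(V_\tau,m_\tau)$; consequently the tensored operators agree too, and the right-hand side becomes $\frac{1}{k!}\sum_\tau \frac{1}{\vert\Gamma_\tau\vert}\langle ((A_\tau(I-M_\tau))\otimes\id_\B)\phi_\tau,\psi_\tau\rangle_\tau$. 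This is the step that brings the hypothesis operator into the picture and turns the a priori sign-indefinite $A_\tau-M_\tau$ into something we control.

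Now set $\lambda = \max_{\tau\in\Sigma(k-1,\Gamma)}\Vert (A_\tau(I-M_\tau))\otimes\id_\B\Vert_{B(\ell^2(V_\tau,m_\tau;\B))} < \frac{1}{k+1}$. Each summand is estimated by the Cauchy--Schwarz inequality for the vector-valued coupling $\langle\cdot,\cdot\rangle_\tau$ (valid since $\vert(x,x^*)\vert\le\vert x\vert\,\vert x^*\vert$ pointwise, followed by the scalar Cauchy--Schwarz on the weighted $\ell^2$-sums), then by the operator norm bound $\Vert ((A_\tau(I-M_\tau))\otimes\id_\B)\phi_\tau\Vert_\tau\le\lambda\Vert\phi_\tau\Vert_\tau$, and then by $\Vert\phi_\tau\Vert_\tau\Vert\psi_\tau\Vert_\tau\le\tfrac12(\Vert\phi_\tau\Vert_\tau^2+\Vert\psi_\tau\Vert_\tau^2)$. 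Summing over $\tau$ and using the norm identities $\sum_\tau\frac{1}{\vert\Gamma_\tau\vert}\Vert\phi_\tau\Vert_\tau^2 = (k+1)!\Vert\phi\Vert^2$ and likewise for $\psi$ from Lemma \ref{inner-prod and norm calc lemma}, the factor $\frac{1}{k!}$ against $(k+1)!$ produces
$$\left\vert \langle \phi, \psi\rangle - \langle d_k \phi, \overline{d_k} \psi\rangle - \langle \overline{d_{k-1}}^* \phi, d_{k-1}^* \psi\rangle\right\vert \le (k+1)\lambda\,\frac{\Vert\phi\Vert^2+\Vert\psi\Vert^2}{2}.$$
Since $\vert\langle\phi,\psi\rangle\vert - \vert\langle d_k\phi,\overline{d_k}\psi\rangle\vert - \vert\langle\overline{d_{k-1}}^*\phi,d_{k-1}^*\psi\rangle\vert$ is at most the left-hand side by the triangle inequality, taking $C = (k+1)\lambda < 1$ verifies the hypothesis of Lemma \ref{vanishing of coho lemma - Nowak}, which then gives $H^k(X,\pi)=0$ (and in fact $H^k(X,\overline{\pi})=0$ as well).

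I do not expect a genuine obstacle here: all the analytic content is already packaged into the localization lemmas and into Nowak's criterion. The only points requiring care are the combinatorial bookkeeping of the factors $\tfrac{1}{k!}$ versus $\tfrac{1}{(k+1)!}$ when collapsing the local formulas, and the elementary but crucial identity $A_\tau - M_\tau = A_\tau(I-M_\tau)$, which is precisely what aligns the quantity appearing in the assembled identity with the operator bounded in the hypothesis.
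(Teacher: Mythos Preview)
Your proposal is correct and follows essentially the same route as the paper: combine Lemmas \ref{d inner-prod calc lemma} and \ref{d* inner-prod calc lemma} into a single identity, use $A_\tau M_\tau = M_\tau$ to rewrite $A_\tau - M_\tau$ as $A_\tau(I-M_\tau)$, bound each local term via the operator-norm hypothesis together with Cauchy--Schwarz and AM--GM, collapse the sums using Lemma \ref{inner-prod and norm calc lemma} to produce the constant $C=(k+1)\lambda<1$, and conclude by Lemma \ref{vanishing of coho lemma - Nowak}. The only cosmetic difference is the order of the algebra (you add the two lemmas first and then identify $A_\tau - M_\tau$, whereas the paper splits $A_\tau$ inside the formula of Lemma \ref{d inner-prod calc lemma} before invoking Lemma \ref{d* inner-prod calc lemma}); the content is identical.
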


\begin{proof}
Let $\B$ be a reflexive Banach space and $\pi$ be a continuous isometric representation of $\Gamma$ on $\B$. Denote 
$$C' = \max_{\tau \in \Sigma (k-1, \Gamma)} \Vert (A_\tau (I-M_\tau) \otimes \id_{\B}) \Vert_{B (\ell^2 (V,m ; \B))}.$$
The by Lemma \ref{d inner-prod calc lemma}, for every $\phi \in C^k (X, \pi), \psi \in C^k (X, \overline{\pi})$, 
\begin{dmath*}
\langle d_k \phi, \overline{d_{k}} \psi \rangle = \langle \phi , \psi \rangle -  \frac{1}{k!} \sum_{\tau \in \Sigma (k-1,\Gamma)} \frac{1}{\vert \Gamma_\tau \vert} \langle (A_\tau \otimes \id_{\B}) \phi_\tau, \psi_\tau \rangle_\tau = 
\langle \phi , \psi \rangle -  \frac{1}{k!} \sum_{\tau \in \Sigma (k-1,\Gamma)} \frac{1}{\vert \Gamma_\tau \vert} \langle (A_\tau (I-M_\tau) \otimes \id_{\B})  \phi_\tau, \psi_\tau \rangle_\tau \\
- \frac{1}{k!} \sum_{\tau \in \Sigma (k-1,\Gamma)} \frac{1}{\vert \Gamma_\tau \vert} \langle (A_\tau M_\tau \otimes \id_{\B}) \phi_\tau, \psi_\tau \rangle_\tau =^{A_\tau M_{\tau} = M_{\tau}} 
\langle \phi , \psi \rangle -  \frac{1}{k!} \sum_{\tau \in \Sigma (k-1,\Gamma)} \frac{1}{\vert \Gamma_\tau \vert} \langle (A_\tau (I-M_\tau) \otimes \id_{\B})  \phi_\tau, \psi_\tau \rangle_\tau \\
- \frac{1}{k!} \sum_{\tau \in \Sigma (k-1,\Gamma)} \frac{1}{\vert \Gamma_\tau \vert} \langle (M_\tau \otimes \id_{\B}) \phi_\tau, \psi_\tau \rangle_\tau  =^{\text{Lemma } \ref{d* inner-prod calc lemma}} 
\langle \phi , \psi \rangle -  \frac{1}{k!} \sum_{\tau \in \Sigma (k-1,\Gamma)} \frac{1}{\vert \Gamma_\tau \vert} \langle (A_\tau (I-M_\tau) \otimes \id_{\B})  \phi_\tau, \psi_\tau \rangle_\tau \\
- \langle \overline{d_{k-1}}^* \phi, d_{k-1}^* \psi \rangle.
\end{dmath*}
Thus
\begin{dmath*}
\langle d_k \phi, \overline{d_{k}} \psi \rangle + \langle \overline{d_{k-1}}^* \phi, d_{k-1}^* \psi \rangle = \langle \phi , \psi \rangle -  \frac{1}{k!} \sum_{\tau \in \Sigma (k-1,\Gamma)} \frac{1}{\vert \Gamma_\tau \vert} \langle (A_\tau (I-M_\tau) \otimes \id_{\B})  \phi_\tau, \psi_\tau \rangle_\tau.
\end{dmath*}
Applying absolute value on this equation and using the triangle inequality,
\begin{dmath*}
{\left\vert \langle d_k \phi, \overline{d_k} \psi \rangle \right\vert + \left\vert \langle \overline{d_{k-1}}^* \phi, d_{k-1}^* \psi \rangle \right\vert \geq}\\
 \vert \langle \phi , \psi \rangle \vert -  \frac{1}{k!} \sum_{\tau \in \Sigma (k-1,\Gamma)} \frac{1}{\vert \Gamma_\tau \vert} \vert \langle (A_\tau (I-M_\tau) \otimes \id_{\B})  \phi_\tau, \psi_\tau \rangle_\tau \vert \geq \\
 \vert \langle \phi , \psi \rangle \vert -  \frac{1}{k!} \sum_{\tau \in \Sigma (k-1,\Gamma)} \frac{1}{\vert \Gamma_\tau \vert} \Vert (A_\tau (I-M_\tau) \otimes \id_{\B}) \Vert_{B (\ell^2 (V_\tau,m_\tau ; \B))} \Vert \phi_\tau \Vert_\tau  \Vert \psi_\tau \Vert_\tau \geq \\
 \vert \langle \phi , \psi \rangle \vert -  \frac{1}{k!} \sum_{\tau \in \Sigma (k-1,\Gamma)} \frac{1}{\vert \Gamma_\tau \vert} C' \frac{\Vert \phi_\tau \Vert_\tau^2+ \Vert \psi_\tau \Vert_\tau^2}{2} =^{\text{Lemma } \ref{inner-prod and norm calc lemma}} \\
  \vert \langle \phi , \psi \rangle \vert - (k+1)C'  \frac{\Vert \phi \Vert^2+ \Vert \psi \Vert^2}{2}
\end{dmath*}

If we denote $C = (k+1)C'$, then by our assumption $C<1$ and we prove that 
$$\left\vert \langle d_k \phi, \overline{d_k} \psi \rangle \right\vert + \left\vert \langle \overline{d_{k-1}}^* \phi, d_{k-1}^* \psi \rangle \right\vert \geq  \vert \langle \phi , \psi \rangle \vert - C (\frac{\Vert \phi \Vert^2 +  \Vert \psi \Vert^2}{2}),$$
and by Lemma \ref{vanishing of coho lemma - Nowak}, $H^k (X, \pi) = H^k (X, \overline{\pi}) = 0$. 
\end{proof}

Next, we will apply this Theorem in the context of uniformly curved spaces: 

\begin{proposition}
\label{vanishing of k-coho prop for uni curv}
Let $X,\Gamma$ be as above and $\alpha : (0,1] \rightarrow (0,1]$ be a strictly monotone increasing function. Fix $1 \leq k \leq n-1$. If there is $\lambda < \alpha^{-1} (\frac{1}{2(k+1)})$ such that for every $\tau \in \Sigma (k-1, \Gamma)$, the one-skeleton of $X_\tau$ is a two-sided $\lambda$-spectral expander, then for every $\B \in \mathcal{E}^{\text{u-curved}}_\alpha$ and every continuous isometric representation $\pi$ of $\Gamma$ on $\B$, $H^k(X,\pi) = 0$.
\end{proposition}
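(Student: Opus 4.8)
The plan is to obtain the proposition as an immediate combination of the local criterion Theorem \ref{Local criterion thm} with the spectral estimate of Corollary \ref{norm bound on rw on uc coro}. So fix $\B \in \mathcal{E}^{\text{u-curved}}_\alpha$ and a continuous isometric representation $\pi$ of $\Gamma$ on $\B$. Since $\B$ is uniformly curved it is reflexive by Theorem \ref{uni. cur. are s-reflex. thm}, so $\B$ falls under the standing hypotheses of \cref{equiv coho sec} and of Theorem \ref{Local criterion thm}.

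First I would record the consequence of the hypothesis on $\lambda$: because $\alpha$ is strictly monotone increasing, the inequality $\lambda < \alpha^{-1}(\tfrac{1}{2(k+1)})$ is equivalent to $\alpha(\lambda) < \tfrac{1}{2(k+1)}$, hence $2\alpha(\lambda) < \tfrac{1}{k+1}$.

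Next, I would verify the hypothesis of Theorem \ref{Local criterion thm} link by link. Fix $\tau \in \Sigma(k-1,\Gamma)$. Since $X$ is pure $n$-dimensional and $k \le n-1$, the link $X_\tau$ is pure $(n-k)$-dimensional with $n-k \ge 1$, so its $1$-skeleton $(V_\tau, E_\tau)$ is a genuine finite weighted graph, and by assumption it is a connected two-sided $\lambda$-spectral expander. Applying Corollary \ref{norm bound on rw on uc coro} to this graph (with the monotone increasing $\alpha$ and $\B \in \mathcal{E}^{\text{u-curved}}_\alpha$) gives
$$\Vert (A_\tau(I-M_\tau)) \otimes \id_{\B}\Vert_{B(\ell^2(V_\tau, m_\tau; \B))} \le 2\alpha(\lambda) < \frac{1}{k+1}.$$
Because the action of $\Gamma$ is cocompact, $\Sigma(k-1,\Gamma)$ is finite, so taking the maximum over $\tau$ still gives a quantity strictly below $\tfrac{1}{k+1}$. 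This is exactly the hypothesis of Theorem \ref{Local criterion thm} in degree $k$, so that theorem yields $H^k(X,\pi) = 0$.

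I do not expect any real obstacle: the whole statement is a bookkeeping step bridging Theorem \ref{Local criterion thm} and Corollary \ref{norm bound on rw on uc coro}. The only places needing a little care are translating the hypothesis on $\lambda$ into the bound $2\alpha(\lambda) < \tfrac{1}{k+1}$ (which is why strict monotonicity of $\alpha$ is assumed) and observing that for $1 \le k \le n-1$ the relevant links are honest graphs so that Corollary \ref{norm bound on rw on uc coro} genuinely applies; the passage to the finite maximum over $\Sigma(k-1,\Gamma)$ is immediate from cocompactness.
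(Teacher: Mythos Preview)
Your proof is correct and follows essentially the same approach as the paper: invoke Theorem \ref{uni. cur. are s-reflex. thm} for reflexivity, apply Corollary \ref{norm bound on rw on uc coro} at each link to get the bound $2\alpha(\lambda) < \tfrac{1}{k+1}$, and conclude via Theorem \ref{Local criterion thm}. The paper's version is slightly terser (it writes the chain of inequalities $2\alpha(\lambda) < 2\alpha(\alpha^{-1}(\tfrac{1}{2(k+1)})) = \tfrac{1}{k+1}$ directly rather than first translating the hypothesis), but the content is the same.
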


\begin{proof}
First, recall that by Theorem \ref{uni. cur. are s-reflex. thm}, every $\B \in \mathcal{E}^{\text{u-curved}}_\alpha$ is reflexive. Second, by Corollary \ref{norm bound on rw on uc coro} for every $\tau \in \Sigma (k-1, \Gamma)$,
$$\Vert (A_\tau (I-M_\tau)) \otimes \id_{\B} \Vert_{B(\ell^2 (V,m ; \B))} \leq 2 \alpha (\lambda)  < 2 \alpha \left( \alpha^{-1} \left(\frac{1}{2(k+1)} \right) \right) = \frac{1}{k+1}.$$
Therefore, the conditions of Theorem \ref{Local criterion thm} are fulfilled and for every continuous isometric representation $\pi$ of $\Gamma$ on $\B$, $H^k(X,\pi) = 0$.

\end{proof}

As a result of this Proposition we deduce the following vanishing result for strictly Hilbertian spaces that appeared in Corollary \ref{theta Hil coro - intro} (1): 

\begin{corollary}
\label{theta hil vani coro}
Let $X,\Gamma$ be as above, $0 < \theta_0 \leq 1$ a constant. Denote $\mathcal{E}_{\theta_0}$ to be the smallest class of Banach spaces that contains all strictly $\theta$-Hilbertian Banach spaces for all $\theta_0 \leq \theta \leq 1$ and is closed under passing to quotients, subspaces, $\ell^2$-sums and ultraproducts of Banach spaces. Fix $1 \leq k \leq n-1$. If there is $0 < \lambda < \left(\frac{1}{2(k+1)} \right)^{\frac{1}{\theta_0}}$ such that for every $\tau \in \Sigma (k-1, \Gamma)$, the one-skeleton of $X_\tau$ is a two-sided $\lambda$-spectral expander, then for every $\B \in \mathcal{E}_{\theta_0}$ and every continuous isometric representation $\pi$ of $\Gamma$ on $\B$, $H^k(X,\pi) = 0$.
\end{corollary}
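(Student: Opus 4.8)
The plan is to obtain this as an immediate consequence of Proposition~\ref{vanishing of k-coho prop for uni curv} together with Corollary~\ref{theta-hil is in uni-curv cor}, applied to the explicit function $\alpha(t) = t^{\theta_0}$.

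First I would record that $\alpha(t) = t^{\theta_0}$ is a strictly monotone increasing function from $(0,1]$ to $(0,1]$ with $\lim_{t \to 0^{+}} \alpha(t) = 0$ (here one uses $\theta_0 > 0$), so that it is an admissible choice in Proposition~\ref{vanishing of k-coho prop for uni curv}. Its inverse is $\alpha^{-1}(s) = s^{1/\theta_0}$, and hence $\alpha^{-1}\!\left(\frac{1}{2(k+1)}\right) = \left(\frac{1}{2(k+1)}\right)^{1/\theta_0}$. Thus the hypothesis $0 < \lambda < \left(\frac{1}{2(k+1)}\right)^{1/\theta_0}$ of the Corollary is exactly the condition $\lambda < \alpha^{-1}\!\left(\frac{1}{2(k+1)}\right)$ required to invoke Proposition~\ref{vanishing of k-coho prop for uni curv}, and the two-sided $\lambda$-spectral expander assumption on the one-skeletons of $X_\tau$ for $\tau \in \Sigma(k-1,\Gamma)$ is verbatim the same in both statements.

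Next, Corollary~\ref{theta-hil is in uni-curv cor} gives the inclusion $\mathcal{E}_{\theta_0} \subseteq \mathcal{E}^{\text{u-curved}}_{\alpha}$ for this choice of $\alpha$. Consequently, for any $\B \in \mathcal{E}_{\theta_0}$ we have $\B \in \mathcal{E}^{\text{u-curved}}_{\alpha}$, and applying Proposition~\ref{vanishing of k-coho prop for uni curv} yields $H^{k}(X,\pi) = 0$ for every continuous isometric representation $\pi$ of $\Gamma$ on $\B$. Reflexivity of $\B$, which is needed to even speak of $H^{k}(X,\pi)$ in the sense of the earlier sections, is already guaranteed for spaces in $\mathcal{E}^{\text{u-curved}}_{\alpha}$ by Theorem~\ref{uni. cur. are s-reflex. thm}, so no separate argument is required.

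I do not expect a genuine obstacle here: the entire content of the statement is packaged in the two cited results, and all that remains is the elementary verification of the monotonicity, range, and limit properties of the power function $t \mapsto t^{\theta_0}$ together with the bookkeeping that matches $\alpha^{-1}\!\left(\tfrac{1}{2(k+1)}\right)$ with the stated bound on $\lambda$.
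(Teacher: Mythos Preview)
Your proposal is correct and follows exactly the same approach as the paper: invoke Corollary~\ref{theta-hil is in uni-curv cor} to obtain $\mathcal{E}_{\theta_0} \subseteq \mathcal{E}^{\text{u-curved}}_{\alpha(t)=t^{\theta_0}}$ and then apply Proposition~\ref{vanishing of k-coho prop for uni curv}. The additional details you supply (monotonicity of $t\mapsto t^{\theta_0}$, the explicit form of $\alpha^{-1}$, and the remark on reflexivity via Theorem~\ref{uni. cur. are s-reflex. thm}) are all correct and simply make explicit what the paper leaves implicit.
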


\begin{proof}
Corollary \ref{theta-hil is in uni-curv cor} states that $\mathcal{E}_{\theta_0} \subseteq \mathcal{E}^{\text{u-curved}}_{\alpha (t) = t^{\theta_0}}$. Thus the assertion follows directly from Proposition \ref{vanishing of k-coho prop for uni curv}.
\end{proof}

Specializing this Corollary to the case of vanishing of the $L^p$ cohomology of a group acting on a $2$-dimensional simplicial complex yields:
\begin{corollary}
\label{criterion coho vanish for L^p}
Let $X$ be a locally finite, pure $2$-dimensional simplicial complex such that all the links of $X$ of dimension $\geq 1$ are connected and $\Gamma$ be a locally compact, unimodular group acting cocompactly and properly on $X$. Also let $p >2$, $0 < \lambda <  \frac{1}{2^p}$ be constants. Assume that for every vertex $\lbrace v \rbrace \in X(0)$, the one-skeleton of $X_{\lbrace v \rbrace}$ is a two-sided $\lambda$-spectral expander. Then for every $2 \leq p' \leq p$, every space $\B$ that is a commutative or non-commutative $L^{p'}$-space and every continuous isometric representation $\pi$ of $\Gamma$ on $\B$ it holds that $H^1 (X, \pi) =0$.
\end{corollary}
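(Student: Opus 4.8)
The plan is to deduce this statement directly from Corollary \ref{theta hil vani coro} by choosing the right parameter $\theta_0$ and checking that every commutative or non-commutative $L^{p'}$-space with $2 \le p' \le p$ lies in the class $\mathcal{E}_{\theta_0}$. First I would recall from \cref{subsect Strictly theta-Hilbertian spaces} that an $L^{p'}$-space (commutative or non-commutative) is strictly $\theta$-Hilbertian with $\theta = \frac{2}{p'}$ for $p' \ge 2$. As $p'$ ranges over $[2,p]$, the exponent $\theta = \frac{2}{p'}$ ranges over $[\frac{2}{p}, 1]$, so setting $\theta_0 = \frac{2}{p}$ we have that every such $L^{p'}$-space is strictly $\theta$-Hilbertian for some $\theta \in [\theta_0, 1]$, hence belongs to $\mathcal{E}_{\theta_0}$.

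Next I would verify that the hypothesis on $\lambda$ matches. Corollary \ref{theta hil vani coro} (applied with $n = 2$, so that the only relevant value is $k = 1$) requires $0 < \lambda < \left(\frac{1}{2(k+1)}\right)^{1/\theta_0} = \left(\frac{1}{4}\right)^{p/2} = \frac{1}{2^p}$, which is exactly the bound assumed in the statement. The links $X_\tau$ for $\tau \in \Sigma(0,\Gamma)$ are the vertex links $X_{\{v\}}$, which are assumed connected and two-sided $\lambda$-spectral expanders; note connectedness is what makes the spectral expansion condition meaningful (and in particular rules out the bipartite degeneracy, since a two-sided $\lambda$-expander with $\lambda < 1$ is non-bipartite). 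So all hypotheses of Corollary \ref{theta hil vani coro} with $k = 1$ and $\theta_0 = \frac{2}{p}$ are satisfied.

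Applying Corollary \ref{theta hil vani coro} then gives $H^1(X,\pi) = 0$ for every $\B \in \mathcal{E}_{\theta_0}$ and every continuous isometric representation $\pi$; in particular this holds for every commutative or non-commutative $L^{p'}$-space with $2 \le p' \le p$, which is the claim. I do not expect any genuine obstacle here: the only things to be careful about are the bookkeeping of the exponent identity $\theta_0 = \frac{2}{p} \iff \left(\frac{1}{4}\right)^{1/\theta_0} = \frac{1}{2^p}$, and making sure the class $\mathcal{E}_{\theta_0}$ as defined (closed under subspaces, quotients, $\ell^2$-sums and ultraproducts) indeed contains all the $L^{p'}$-spaces in the stated range — which it does, since it already contains every strictly $\theta$-Hilbertian space for $\theta \in [\theta_0,1]$ before taking any closure operations.
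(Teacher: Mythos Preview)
Your proposal is correct and follows essentially the same approach as the paper's own proof: apply Corollary~\ref{theta hil vani coro} with $n=2$, $k=1$ and $\theta_0 = \tfrac{2}{p}$, using that (commutative or non-commutative) $L^{p'}$-spaces are strictly $\tfrac{2}{p'}$-Hilbertian. Your write-up is just a bit more detailed in checking the exponent identity $\left(\tfrac{1}{4}\right)^{1/\theta_0} = \tfrac{1}{2^p}$ and the inclusion in $\mathcal{E}_{\theta_0}$, but the argument is the same.
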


\begin{proof}
As noted above, for $2 \leq p < \infty$, every (commutative or non-commutative) $L^p$-space is $\theta$-Hilbertian with $\theta = \frac{2}{p}$. Thus applying Corollary \ref{theta hil vani coro} with $k=1, n=2$ and $\theta = \frac{2}{p}$ gives the stated result.
\end{proof}

The conditions for Proposition \ref{vanishing of k-coho prop for uni curv} and Corollary \ref{theta hil vani coro} can be deduced for all $1 \leq k \leq n-1$, based only on the $1$-dimensional links of $X$. This is done via the following Theorem from \cite[Theorem 1.4]{OppLocSpec}:
\begin{theorem}
\label{spec descent thm}
Let $Y$ be a finite, pure $l$-dimensional complex where $l \geq 2$, such that (one-skeletons of) all the  links of $Y$ of dimension $\geq 1$ are connected (including the one-skeleton of $Y$). Denote $m_Y$ to be the weight function on $Y$, $V_Y$ the vertices of the $1$-skeleton of $Y$ and $A_Y,M_Y$ the operators associated with the random walk on this $1$-skeleton. Let $-1 \leq \kappa_1 \leq 0 \leq  \kappa_2 \leq \frac{1}{l}$ be constants such that for every $\tau \in Y(l-2)$ the spectrum of $A_\tau$ is contained in $[\kappa_1, \kappa_2] \cup \lbrace 1 \rbrace$. Then the spectrum of the random walk on the one-skeleton of $Y$ is contained in $[\frac{\kappa_1}{1-(l-1) \kappa_1}, \frac{\kappa_2}{1-(l-1) \kappa_2}] \cup \lbrace 1 \rbrace$. Equivalently, if there are $-1 \leq \lambda_1 \leq 0 \leq \lambda_2 \leq 1$ such that for every $\tau \in Y(l-2)$ the spectrum of $A_\tau$ is contained in $[\frac{\lambda_1}{1 + (l-1) \lambda_1}, \frac{\lambda_2}{1 + (l-1) \lambda_2}] \cup \lbrace 1 \rbrace$. Then the spectrum of the random walk on the one-skeleton of $Y$ is contained in $[\lambda_1, \lambda_2] \cup \lbrace 1 \rbrace$.
\end{theorem}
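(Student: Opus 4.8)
The plan is to prove the statement by induction on the dimension $l$, reducing everything to a single ``one-step'' comparison between the random walk on the $1$-skeleton of a complex and the random walks on the $1$-skeletons of its vertex-links; this one-step comparison is the case $l=2$ of the statement, and it is also what drives the inductive step.

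\textbf{The one-step lemma.} I would first isolate the following. Let $Z$ be a finite, pure complex of dimension $\ge 2$ all of whose links of dimension $\ge 1$ (including the $1$-skeleton of $Z$) are connected; write $A_Z, M_Z$ for the random-walk and averaging operators on the $1$-skeleton of $Z$, and $A_v, M_v$ for those on the $1$-skeleton of the vertex-link $Z_v$. If $-1 \le \mu_1 \le 0 \le \mu_2 < 1$ and $\Spec(A_v) \subseteq [\mu_1,\mu_2] \cup \{1\}$ for every vertex $v$, then $\Spec(A_Z) \subseteq \left[\tfrac{\mu_1}{1-\mu_1}, \tfrac{\mu_2}{1-\mu_2}\right] \cup \{1\}$. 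For the proof, given $f \in \ell^2(V_Z, m_Z)$ let $f_v \in \ell^2(Z_v(0), m_v)$ be the restriction of $f$ to the neighbours of $v$. Using $m_v(\{u,w\}) = m_Z(\{v,u,w\})$ and $m_Z(e) = \sum_{\sigma \in Z(2),\, e \subseteq \sigma} m_Z(\sigma)$, one establishes — by weight computations of exactly the type carried out in Lemmas~\ref{inner-prod and norm calc lemma}, \ref{d* inner-prod calc lemma} and \ref{d inner-prod calc lemma} — the identities
$$\sum_v \langle f_v, f_v\rangle_v = \Vert f\Vert^2, \qquad \sum_v \langle M_v f_v, f_v\rangle_v = \Vert A_Z f\Vert^2, \qquad \sum_v \langle A_v f_v, f_v\rangle_v = \langle A_Z f, f\rangle,$$
where the middle one comes from $\Vert M_v f_v\Vert_v^2 = m_Z(v)\,\vert (A_Z f)(v)\vert^2$ and the last two both unwind to $2\sum_{\sigma \in Z(2)} m_Z(\sigma)\sum_{\{x,y\}\subseteq\sigma} f(x)f(y)$. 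Subtracting these gives $\sum_v \langle A_v(I-M_v)f_v, f_v\rangle_v = \langle A_Z f, f\rangle - \Vert A_Z f\Vert^2$ and $\sum_v \Vert (I-M_v)f_v\Vert_v^2 = \Vert f\Vert^2 - \Vert A_Z f\Vert^2$.

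\textbf{Finishing the one-step lemma and the induction.} Now let $f$ be an eigenfunction of the self-adjoint finite-rank operator $A_Z$ with eigenvalue $\lambda \ne 1$; connectedness of the $1$-skeleton of $Z$ forces $f$ to be orthogonal to the constant functions and $\lambda < 1$. The hypothesis gives $\mu_1 \Vert (I-M_v)f_v\Vert_v^2 \le \langle A_v(I-M_v)f_v, f_v\rangle_v \le \mu_2 \Vert (I-M_v)f_v\Vert_v^2$ for each $v$; summing over $v$ and inserting the identities together with $\Vert A_Z f\Vert^2 = \lambda^2\Vert f\Vert^2$ and $\langle A_Z f, f\rangle = \lambda\Vert f\Vert^2$ yields $\mu_1(1-\lambda^2) \le \lambda - \lambda^2 \le \mu_2(1-\lambda^2)$; dividing by $1-\lambda > 0$ gives $\mu_1 \le \tfrac{\lambda}{1+\lambda} \le \mu_2$, i.e. $\tfrac{\mu_1}{1-\mu_1} \le \lambda \le \tfrac{\mu_2}{1-\mu_2}$, which proves the lemma. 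The statement now follows by induction on $l$. For $l = 2$ the codimension-$2$ links of $Y$ are exactly its vertex-links, so the one-step lemma with $\mu_i = \kappa_i$ gives the bound $\left[\tfrac{\kappa_1}{1-\kappa_1}, \tfrac{\kappa_2}{1-\kappa_2}\right]\cup\{1\} = \left[\tfrac{\kappa_1}{1-(l-1)\kappa_1}, \tfrac{\kappa_2}{1-(l-1)\kappa_2}\right]\cup\{1\}$. For $l \ge 3$, each vertex-link $Y_v$ is a finite pure $(l-1)$-dimensional complex inheriting the connectivity hypotheses, whose codimension-$2$ links are precisely the $Y_\tau$ with $\tau \in Y(l-2)$ and $v \in \tau$; since $\kappa_2 \le \tfrac1l \le \tfrac1{l-1}$, the inductive hypothesis applies and gives $\Spec(A_v) \subseteq \left[\tfrac{\kappa_1}{1-(l-2)\kappa_1}, \tfrac{\kappa_2}{1-(l-2)\kappa_2}\right]\cup\{1\}$ for every vertex $v$ of $Y$. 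Applying the one-step lemma to $Y$ with $\mu_i = \tfrac{\kappa_i}{1-(l-2)\kappa_i}$ (one checks $-1\le\mu_1\le0\le\mu_2<1$ using $\kappa_2\le\tfrac1l$) and simplifying $\tfrac{\mu_i}{1-\mu_i} = \tfrac{\kappa_i}{1-(l-1)\kappa_i}$ closes the induction. The equivalent reformulation is then the substitution $\kappa_i = \tfrac{\lambda_i}{1+(l-1)\lambda_i}$.

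\textbf{Main obstacle.} The content is concentrated in the one-step lemma, and within it in the two identities $\sum_v \langle M_v f_v, f_v\rangle_v = \Vert A_Z f\Vert^2$ and $\sum_v \langle A_v f_v, f_v\rangle_v = \langle A_Z f, f\rangle$: getting these exactly right is a matter of matching the normalizing weights at the vertex, edge and triangle levels and recognizing the $M_v$-averaging term as the squared norm of $A_Z f$. A secondary bookkeeping point is to track the constraint $\kappa_2 \le 1/l$ through the induction so that every intermediate $\mu_2$ stays strictly below $1$ — which is what makes $1-\mu_2 > 0$ and keeps the quadratic inequality pointing the right way — and so that the output interval remains inside $[-1,1]$.
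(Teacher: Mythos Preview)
Your proof is correct. Note, however, that the paper does not actually prove this theorem: it is quoted verbatim from \cite[Theorem~1.4]{OppLocSpec}, with the remark that there it is stated for Laplacians rather than random-walk operators. So there is no ``paper's own proof'' to compare against beyond that citation.

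That said, your argument is exactly the one underlying the cited result. The three localization identities you isolate,
\[
\sum_v \Vert f_v\Vert_v^2 = \Vert f\Vert^2,\qquad
\sum_v \langle M_v f_v,f_v\rangle_v = \Vert A_Z f\Vert^2,\qquad
\sum_v \langle A_v f_v,f_v\rangle_v = \langle A_Z f,f\rangle,
\]
are precisely the $k=0$ instances (for a finite complex with trivial group) of the localization Lemmas~\ref{inner-prod and norm calc lemma}--\ref{d inner-prod calc lemma} in this paper, and they are the same identities that drive Garland's method in \cite{BS} and the spectral descent in \cite{OppLocSpec}. Your one-step lemma is the translation of the ``one level down'' Laplacian inequality in \cite{OppLocSpec} into random-walk language, and the induction on $l$ with the telescoping $\mu_i/(1-\mu_i)=\kappa_i/(1-(l-1)\kappa_i)$ is the same iteration carried out there. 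So the route is not genuinely different; you have reconstructed the cited proof, and tracked the constraint $\kappa_2\le 1/l$ through the induction correctly to keep each intermediate $\mu_2<1$.
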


\begin{remark}
In \cite{OppLocSpec} this Theorem is written in the language of spectral gaps of Laplacians, but as noted above the translation to the language of random walks is straight-forward.
\end{remark}

\begin{observation}
\label{bound on lower spec observation}
Theorem \ref{spec descent thm} is not symmetric as it may appear at first glance: while the upper bound on the spectrum of $A_Y$ deteriorates as $l$ increase, the lower bound actually improves as $l$ increases. In particular, it is always the case that the smallest eigenvalue of the one-skeleton of every graph is $\geq -1$. Thus, in the above theorem we can always take $\kappa_1 = -1$ and get that the spectrum of the random walk on the one-skeleton of $Y$ is contained in $[-\frac{1}{l}, 1]$.
\end{observation}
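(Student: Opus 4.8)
The plan is to read the observation as a one-line specialization of Theorem~\ref{spec descent thm}: we stay within the hypotheses of that theorem (so $l\geq 2$, all links of $Y$ of dimension $\geq 1$ are connected, and there is some $\kappa_2\in[0,\frac1l]$ bounding the nontrivial eigenvalues of $A_\tau$ for $\tau\in Y(l-2)$), and we simply insert the crudest admissible value of $\kappa_1$. First I would observe that nothing at all need be assumed at the bottom of the spectrum: by the basic facts recalled in \cref{rw on graph subsec}, on any finite graph every eigenvalue of the random walk operator lies in $[-1,1]$. Since for $\tau\in Y(l-2)$ the link $X_\tau$ is a pure $1$-dimensional complex --- a graph --- which is connected by hypothesis, the inclusion $\Spec(A_\tau)\subseteq[-1,\kappa_2]\cup\lbrace 1\rbrace$ holds automatically, so $\kappa_1=-1$ is always an admissible choice.

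I would then apply Theorem~\ref{spec descent thm} with $\kappa_1=-1$, which turns its conclusion into
\[
\Spec(A_Y)\subseteq\left[\frac{-1}{1-(l-1)(-1)},\ \frac{\kappa_2}{1-(l-1)\kappa_2}\right]\cup\lbrace 1\rbrace .
\]
The left endpoint simplifies to $\frac{-1}{1+(l-1)}=-\frac1l$, giving the claimed lower bound. For the right endpoint, $l\geq 2$ and $0\leq\kappa_2\leq\frac1l$ force the denominator $1-(l-1)\kappa_2$ to be positive, and the map $t\mapsto\frac{t}{1-(l-1)t}$ is increasing there (its derivative is $(1-(l-1)t)^{-2}>0$), so $\frac{\kappa_2}{1-(l-1)\kappa_2}\leq\frac{1/l}{1-(l-1)/l}=1$; hence the displayed set is contained in $[-\frac1l,1]$, which is the assertion.

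I do not expect a genuine obstacle here --- the whole content is the substitution $\kappa_1=-1$. The only point that deserves a moment's attention is the monotonicity of $t\mapsto\frac{t}{1-(l-1)t}$ used above, which is exactly what makes it harmless to throw away whatever true lower bound the links may have and replace it by $-1$: doing so can only enlarge the interval that Theorem~\ref{spec descent thm} outputs, so the resulting inclusion stays valid (if possibly far from tight). This also makes the asymmetry in the statement transparent: the left endpoint $\frac{\kappa_1}{1-(l-1)\kappa_1}$ evaluated at $\kappa_1=-1$ is $-\frac1l\to 0$, so the lower bound sharpens as $l$ grows, whereas the right endpoint $\frac{\kappa_2}{1-(l-1)\kappa_2}$ runs up toward $1$, i.e.\ deteriorates.
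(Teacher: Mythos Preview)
Your argument is correct and matches the reasoning the paper leaves implicit: the observation is stated without proof in the paper, since it is just the direct substitution $\kappa_1=-1$ into Theorem~\ref{spec descent thm} together with the elementary fact (recalled in \cref{rw on graph subsec}) that random walk eigenvalues lie in $[-1,1]$. Your additional monotonicity check for the right endpoint is a harmless extra verification that the resulting interval indeed sits inside $[-\tfrac{1}{l},1]$.
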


Using Theorem \ref{spec descent thm}, we deduce a criterion for the vanishing of all the cohomologies:
\begin{theorem}
\label{criterion coho vanish for uc}
Let $X$ be a locally finite, pure $n$-dimensional simplicial complex such that all the links of $X$ of dimension $\geq 1$ are connected and $\Gamma$ be a locally compact, unimodular group acting cocompactly and properly on $X$. Also let $\alpha : (0,1] \rightarrow (0,1]$ be a strictly monotone increasing function, $1 \leq k \leq n-1$ and $0 < \lambda < \alpha^{-1} (\frac{1}{2(k+1)})$ be constants.
\begin{enumerate}
\item If for every $\tau \in \Sigma (n-2, \Gamma)$, the one-skeleton of $X_\tau$ is a two-sided $\frac{\lambda}{1 + (n-k-1) \lambda}$-spectral expander, then for every $\B \in \mathcal{E}^{\text{u-curved}}_\alpha$ and every continuous isometric representation $\pi$ of $\Gamma$ on $\B$, $H^k(X,\pi) = 0$.
\item If $k \leq n - \frac{1}{\lambda}$ and for every $\tau \in \Sigma (n-2, \Gamma)$, the one-skeleton of $X_\tau$ is a \emph{one-sided} $\frac{\lambda}{1 + (n-k-1) \lambda}$-spectral expander, then for every $\B \in \mathcal{E}^{\text{u-curved}}_\alpha$ and every continuous isometric representation $\pi$ of $\Gamma$ on $\B$, $H^k(X,\pi) = 0$.
\end{enumerate}
\end{theorem}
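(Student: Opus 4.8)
The plan is to deduce both parts from Proposition~\ref{vanishing of k-coho prop for uni curv} by showing that, under either hypothesis, for every $\tau_0 \in \Sigma(k-1,\Gamma)$ the one-skeleton of the link $X_{\tau_0}$ is a two-sided $\lambda$-spectral expander; since $\lambda < \alpha^{-1}(\frac{1}{2(k+1)})$, Proposition~\ref{vanishing of k-coho prop for uni curv} then yields $H^k(X,\pi)=0$ for every $\B \in \mathcal{E}^{\text{u-curved}}_\alpha$ and every continuous isometric $\pi$. The boundary case $k=n-1$ is immediate: in part (1) one has $n-k-1=0$, so $\frac{\lambda}{1+(n-k-1)\lambda}=\lambda$ and $\Sigma(n-2,\Gamma)=\Sigma(k-1,\Gamma)$, which is exactly the input of Proposition~\ref{vanishing of k-coho prop for uni curv}; in part (2) the two requirements $\lambda<\alpha^{-1}(\frac{1}{2(k+1)})\le 1$ and $k\le n-\frac1\lambda$ are incompatible when $k=n-1$, so there is nothing to prove. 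Hence we may assume $k\le n-2$, fix $\tau_0\in\Sigma(k-1,\Gamma)$, and set $Y=X_{\tau_0}$, a finite pure $l$-dimensional complex with $l=n-k\ge 2$.

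The engine is the spectral descent Theorem~\ref{spec descent thm} applied to $Y$. First I verify its connectivity hypotheses: every link $Y_\eta$ of $Y$ of dimension $\ge 1$ equals $X_{\tau_0\cup\eta}$ by the iterated-link identity, and this is a link of $X$ of dimension $\ge 1$, hence connected by assumption; in particular $Y=X_{\tau_0}$ itself, being a link of $X$ of dimension $n-k\ge 1$, is connected. Second, for $\eta\in Y(l-2)=X_{\tau_0}(n-k-2)$ one has $Y_\eta=X_{\tau_0\cup\eta}$ with $\tau_0\cup\eta$ an $(n-2)$-simplex of $X$; since the random walk spectrum of $X_\sigma$ depends only on $\sigma$ as a set and is invariant under the (weight-preserving) $\Gamma$-action, the hypothesis stated for representatives in $\Sigma(n-2,\Gamma)$ applies to $\tau_0\cup\eta$, giving that the spectrum of $A_\eta$ is contained in $[-\mu,\mu]\cup\{1\}$ in case (1), and in $[-1,\mu]\cup\{1\}$ in case (2), where $\mu=\frac{\lambda}{1+(n-k-1)\lambda}=\frac{\lambda}{1+(l-1)\lambda}$.

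In case (1) I apply Theorem~\ref{spec descent thm} with $\kappa_1=-\mu$, $\kappa_2=\mu$; the constraints $-1\le\kappa_1\le 0\le\kappa_2\le\frac1l$ hold because $0<\mu<1$ and $\mu\le\frac1l$ is equivalent to $\lambda\le 1$. The conclusion places the spectrum of the random walk on the one-skeleton of $Y$ inside $\left[\frac{-\mu}{1+(l-1)\mu},\frac{\mu}{1-(l-1)\mu}\right]\cup\{1\}$, and the identity $1-(l-1)\mu=\frac{1}{1+(l-1)\lambda}$ gives $\frac{\mu}{1-(l-1)\mu}=\lambda$, while $1+(l-1)\mu\ge 1$ gives $\frac{-\mu}{1+(l-1)\mu}\ge-\mu\ge-\lambda$; thus the one-skeleton of $X_{\tau_0}$ is a two-sided $\lambda$-spectral expander. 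In case (2) I instead take $\kappa_1=-1$, which is legitimate by Observation~\ref{bound on lower spec observation} since the random walk spectrum of any graph is $\ge -1$, and $\kappa_2=\mu$ as before; now the conclusion places the spectrum inside $\left[-\frac1l,\lambda\right]\cup\{1\}$, and the extra hypothesis $k\le n-\frac1\lambda$, i.e.\ $\frac1l\le\lambda$, gives $-\frac1l\ge-\lambda$, so again the one-skeleton of $X_{\tau_0}$ is a two-sided $\lambda$-spectral expander. In either case this holds for every $\tau_0\in\Sigma(k-1,\Gamma)$ with the same $\lambda<\alpha^{-1}(\frac{1}{2(k+1)})$, and Proposition~\ref{vanishing of k-coho prop for uni curv} finishes the proof.

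The only real content is the bookkeeping: identifying iterated links, transporting the connectivity and expansion hypotheses from $X$ to the auxiliary complex $Y=X_{\tau_0}$ using $\Gamma$-invariance, and checking the descent arithmetic; the substantive input, that a gap in the top links descends to a gap in the $1$-skeleton with the stated quantitative loss, is exactly Theorem~\ref{spec descent thm}. The one point to flag is the asymmetry from Observation~\ref{bound on lower spec observation}: in case (2) only the upper end of the link spectra is controlled, so after descent one recovers only the lower bound $-\frac1l$ on the $1$-skeleton, and the hypothesis $k\le n-\frac1\lambda$ is precisely what converts this one-sided bound in the links into a two-sided bound in the $1$-skeleton — which is why an additional assumption is needed there and not in case (1).
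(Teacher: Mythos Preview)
Your proposal is correct and follows the same approach as the paper's proof: reduce to Proposition~\ref{vanishing of k-coho prop for uni curv} by applying the spectral descent Theorem~\ref{spec descent thm} to $Y=X_{\tau_0}$ for each $\tau_0\in\Sigma(k-1,\Gamma)$, using Observation~\ref{bound on lower spec observation} for the one-sided case. You supply considerably more detail than the paper (the $k=n-1$ boundary case, the connectivity verification, the $\Gamma$-invariance remark, and the explicit descent arithmetic), but the structure and substance are the same.
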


\begin{proof}
Let $1 \leq k \leq n-1$ and let $\eta \in \Sigma (k-1,\Gamma)$. If we denote $Y= X_\eta$, then $Y$ is a pure $(n-k)$-dimensional finite simplicial complex and with the notation of Theorem \ref{spec descent thm},
$$\Vert A_\eta (I-M_\eta) \Vert_{B(\ell^2 (V_\eta, m_\eta))} = \Vert A_Y (I-M_Y) \Vert_{B(\ell^2 (V_Y, m_Y))}.$$
Note that the $1$-dimensional links of $Y$ are also $1$-dimensional links of $X$. We also note that for every $\tau \in \Sigma (n-2, \Gamma)$, $X_\tau$ is a graph and $A_\tau$ is the simple random walk on this graph. 

\textbf{Case (1):} Assume that there is $0 \leq \lambda < \alpha^{-1} (\frac{1}{2(k+1)})$  such that for every $\tau \in \Sigma (n-1, \Gamma)$, the one-skeleton of $X_\tau$ is a two-sided $\frac{\lambda}{1 + (n-k-1) \lambda}$-spectral expander. Applying Theorem \ref{spec descent thm} yields that for every $\eta \in \Sigma (k-1,\Gamma)$, the one-skeleton of $X_\eta$ is a two-sided $\lambda$-spectral expander and thus the conditions of Proposition \ref{vanishing of k-coho prop for uni curv} are fulfilled and therefore for every $\B \in \B \in \mathcal{E}^{\text{u-curved}}_\alpha$ and every continuous isometric representation $\pi$ of $\Gamma$ on $\B$, $H^k(X,\pi) = 0$. 

\textbf{Case (2):} The proof is similar to Case (1), but we use Observation \ref{bound on lower spec observation} in order to bound the spectrum from below. We leave the details to the reader.
\end{proof}

Applying the above Theorem for strictly $\theta_0$-Hilbertian (with $\alpha (t) = t^{\theta_0}$) immediately yields the follow Corollary that appeared in the introduction as part of Corollary \ref{theta Hil coro - intro}: 

\begin{corollary}
\label{criterion coho vanish for Hilbertian}
Let $X$ be a locally finite, pure $n$-dimensional simplicial complex such that all the links of $X$ of dimension $\geq 1$ are connected and $\Gamma$ be a locally compact, unimodular group acting cocompactly and properly on $X$. Also let $0 < \theta_0 \leq 1$, $1 \leq k \leq n-1$, $0 < \lambda <  (\frac{1}{2(k+1)})^{\frac{1}{\theta_0}}$ be constants. Denote $\mathcal{E}_{\theta_0}$ to be the smallest class of Banach spaces that contains all strictly $\theta$-Hilbertian Banach spaces for all $\theta_0 \leq \theta \leq 1$ and is closed under subspaces, quotients, $\ell^2$-sums and ultraproducts of Banach spaces.
\begin{enumerate}
\item If for every $\tau \in \Sigma (n-2, \Gamma)$, the one-skeleton of $X_\tau$ is a two-sided $\frac{\lambda}{1 + (n-k-1) \lambda}$-spectral expander, then for every $\B \in \mathcal{E}_{\theta_0}$ and every continuous isometric representation $\pi$ of $\Gamma$ on $\B$, $H^k(X,\pi) = 0$.
\item If $k \leq n - \frac{1}{\lambda}$ and for every $\tau \in \Sigma (n-2, \Gamma)$, the one-skeleton of $X_\tau$ is a \emph{one-sided} $\frac{\lambda}{1 + (n-k-1) \lambda}$-spectral expander, then for every $\B \in \mathcal{E}_{\theta_0}$ and every continuous isometric representation $\pi$ of $\Gamma$ on $\B$, $H^k(X,\pi) = 0$.
\end{enumerate}
\end{corollary}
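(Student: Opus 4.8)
The plan is to obtain this as an immediate specialization of Theorem~\ref{criterion coho vanish for uc} to the exponent function $\alpha(t) = t^{\theta_0}$. First I would check that $\alpha(t) = t^{\theta_0}$ is a legitimate choice in that theorem: for $0 < \theta_0 \leq 1$ the map $t \mapsto t^{\theta_0}$ is strictly monotone increasing and carries $(0,1]$ onto $(0,1]$, so it satisfies the hypotheses placed on $\alpha$. Its inverse is $\alpha^{-1}(s) = s^{1/\theta_0}$, whence $\alpha^{-1}\!\left(\frac{1}{2(k+1)}\right) = \left(\frac{1}{2(k+1)}\right)^{1/\theta_0}$; thus the constraint $0 < \lambda < \left(\frac{1}{2(k+1)}\right)^{1/\theta_0}$ imposed in the corollary is exactly the constraint $0 < \lambda < \alpha^{-1}\!\left(\frac{1}{2(k+1)}\right)$ of Theorem~\ref{criterion coho vanish for uc}.

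Next I would invoke Corollary~\ref{theta-hil is in uni-curv cor}, which states precisely that $\mathcal{E}_{\theta_0} \subseteq \mathcal{E}^{\text{u-curved}}_{\alpha(t) = t^{\theta_0}}$. Consequently every $\B \in \mathcal{E}_{\theta_0}$ belongs to the class to which Theorem~\ref{criterion coho vanish for uc} applies (and in particular is reflexive, by Theorem~\ref{uni. cur. are s-reflex. thm}). The remaining hypotheses of the corollary --- that all links of dimension $\geq 1$ are connected, that $\Gamma$ acts cocompactly and properly, and the two-sided (resp.\ one-sided) spectral expansion of the one-skeletons of the links $X_\tau$ for $\tau \in \Sigma(n-2,\Gamma)$ with the same ratio $\frac{\lambda}{1 + (n-k-1)\lambda}$, together with the inequality $k \leq n - \frac{1}{\lambda}$ in case (2) --- are word-for-word the hypotheses of case (1) (resp.\ case (2)) of Theorem~\ref{criterion coho vanish for uc}. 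Applying that theorem with this $\alpha$, $\lambda$, and $\B$ yields $H^k(X,\pi) = 0$ for every continuous isometric representation $\pi$ of $\Gamma$ on $\B$, which is the assertion.

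Since the statement is a bookkeeping specialization, no genuine obstacle arises; the only point worth double-checking is the numerical identity $\alpha^{-1}\!\left(\frac{1}{2(k+1)}\right) = \left(\frac{1}{2(k+1)}\right)^{1/\theta_0}$ for $\alpha(t) = t^{\theta_0}$, which is what makes the hypothesis on $\lambda$ in the two statements coincide.
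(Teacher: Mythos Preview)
Your proposal is correct and follows exactly the paper's approach: the paper simply states that the corollary follows by applying Theorem~\ref{criterion coho vanish for uc} with $\alpha(t) = t^{\theta_0}$, and your proof spells out the details of this specialization (including the reference to Corollary~\ref{theta-hil is in uni-curv cor} for the inclusion $\mathcal{E}_{\theta_0} \subseteq \mathcal{E}^{\text{u-curved}}_{\alpha(t)=t^{\theta_0}}$).
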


\bibliographystyle{alpha}
\bibliography{bibl}
\Addresses
\end{document}